\newtheorem{thm}{Theorem}[section]
\newtheorem{prop}[thm]{Proposition}
\newtheorem{cor}[thm]{Corollary}
\newtheorem{deft}[thm]{Definition}
\newtheorem{lem}[thm]{Lemma}
\newcommand{\cR}{{\mathcal{R}}}
\newcommand{\cL}{{\mathcal{L}}}
\newcommand{\cC}{{\mathcal{C}}}
\newcommand{\cW}{{\mathcal{W}}}
\newcommand{\fact}[1]{#1\mathpunct{}!}
\newcommand{\Card}{\mathrm{Card}}
\newcommand\CC{\mathbb{C}}
\newcommand\TT{\mathbb{T}}
\newcommand\DD{\mathbb{D}}
\newcommand\NN{\mathbb{N}}
\newcommand\RR{\mathbb{R}}
\begin{document}

\title[Singular values of Hankel operators]{On singular values of Hankel operators on Bergman spaces}


 \author{M. Bourass}
 \address{Mohammed V University in Rabat, Faculty of sciences, CeReMAR -LAMA- B.P. 1014 Rabat, Morocco}
 \email{marouane.bourass@gmail.com}

 \author{O. EL-Fallah}
 \address{Mohammed V University in Rabat, Faculty of sciences, CeReMAR -LAMA- B.P. 1014 Rabat, Morocco}
 \email{omar.elfallah@gmail.com; o.elfallah@um5r.ac.ma}
 
 \thanks{Research partially supported by "Hassan II Academy of Science and Technology"for the first and the second authors.}
  
 \author{I. Marrhich}
 \address{Laboratoire Mod\'elisation, Analyse, Contr\^ole et Statistiques,  Faculty of sciences Ain-Chock, Hassan II University of Casablanca, B.P 5366 Maarif, Casablanca, Morocco}
 \email{ibrahim.marrhich@univh2c.ma; brahim.marrhich@gmail.com}
 
 \author{H. Naqos}
 \address{Mohammed V University in Rabat, Faculty of sciences, CeReMAR -LAMA- B.P. 1014 Rabat, Morocco}
 \email{hatim.ns@gmail.com}


\keywords{Bergman spaces, Hankel operator, Toeplitz operator, singular values, ${\bar{\partial}}$-$L^2$ minimal solution.}
\subjclass[2010]{Primary 47B35, Secondary 30H20, 30C40}	
\begin{abstract}
In this paper, we study the behavior of the singular values of Hankel operators on weighted Bergman spaces $A^2_{\omega _\varphi}$, where $\omega _\varphi= e^{-\varphi}$ and $\varphi$ is a subharmonic function.
We consider compact Hankel operators  $H_{\overline {\phi}}$, with anti-analytic symbols ${\overline {\phi}}$, and give estimates of the trace of $h(|H_{\overline \phi}|)$ for any convex function $h$. This allows us to 
give asymptotic estimates of the singular values $(s_n(H_{\overline {\phi}}))_n$ in terms of decreasing rearrangement of $|\phi '|/\sqrt{\Delta \varphi}$. For the radial weights, we first prove that the critical decay of $(s_n(H_{\overline {\phi}}))_n$ is achieved  by $(s_n (H_{\overline{z}}))_n$. Namely, we establish that if $s_n(H_{\overline {\phi}})= o (s_n(H_{\overline {z}}))$, then $H_{\overline {\phi}} = 0$. Then, we show that if $\Delta \varphi (z) \asymp \frac{1}{(1-|z|^2)^{2+\beta}}$ with $\beta \geq 0$, then $s_n(H_{\overline {\phi}}) = O(s_n(H_{\overline {z}}))$ if and only if $\phi '$ belongs to the Hardy space $H^p$, where $p= \frac{2(1+\beta)}{2+\beta}$. Finally, we compute the asymptotics of $s_n(H_{\overline {\phi}})$ whenever $ \phi ' \in H^{p }$.\\

\end{abstract}
\maketitle


\section{Introduction} 
Hankel operators are one of the most important classes of bounded linear operators acting on spaces of analytic functions. They have many connections with function theory, harmonic analysis, approximation theory, moment problems, spectral theory, orthogonal polynomials, stationary Gaussian processes, $\overline{\partial}$-operator \dots etc. The book of V. Peller  \cite {Pel} is an acknowledged reference in the classical theory of Hankel operators and their various applications.
We are interested in the behavior of the singular values of Hankel operators with anti-analytic symbols acting on Bergman spaces. The first work in this subject is due to Axler \cite{Axler} who described boundedness and compactness of such operators in the classical Bergman space. Right after, Arazy, Fischer and Peetre \cite{AFP} studied  the membership for such operators in Schatten classes. They highlighted the existence of a cut-off by proving that these operators can not be of finite trace.  In \cite {ER}, Engli\v{s} and Rochberg described, using the Boutet de Monvel-Guillemin theory, all Hankel operators with anti-analytic symbols that belongs to the Dixmier class and gave an explicit formula for Dixmier Trace  in this case. Several authors considered the same problems in other spaces of analytic functions \cite{Lue2, Jan, BEY, SY, PZZ, Zhu, Pau}.  In this paper, we give the asymptotic behavior of the singular values of compact Hankel operators, we describe the class of symbols for which the associated Hankel operators have the critical decay and compute the asymptotics of  singular values of Hankel operators associated with this class of symbols.\\

The space of all holomorphic functions on the unit disc $\mathbb{D}$ in the complex plane $\mathbb{C}$ will  be denoted by $\text{Hol}(\mathbb{D})$. The Lebesgue measure on $\CC$ is denoted by $dA$.
The standard Bergman space $A^2_{\alpha}$ with $\alpha >-1$, consists of holomorphic functions $f$ on $\DD$ such that $$\| f \|^2 _\alpha :=  \displaystyle  \int _\mathbb{D} |f(z)|^2  dA_\alpha(z) <\infty,$$
where $dA_\alpha(z):= \frac{(\alpha+1)}{\pi}(1-|z|^2)^\alpha dA(z)$.\\
Recall that $A^2_{\alpha}$ is a reproducing kernel Hilbert space with the kernel
$$
K(z,w)=\frac{1}{(1-\overline{w}z)^{\alpha+2}}, \quad z,w\in\mathbb{D}.
$$
The orthogonal projection of $L^2_\alpha:=L^2(dA_\alpha)$ onto $A^2_\alpha$ will be denoted by $P_\alpha$. 
Given $g \in L^1(dA_\alpha)$, the linear transformation
\begin{equation*}
H_g f= gf-P_\alpha (gf)
\end{equation*}
is a densely defined operator of $A^2_\alpha$ into $L^2_\alpha \ominus A^2_\alpha$. The operator $H_g$ is called the (big) Hankel operator with symbol $g$.
For general facts concerning Hankel operators on Bergman spaces we refer to \cite{AFP, Zhu}.
 \noindent In this paper we are interested in Hankel operator $H_{\overline{\phi}}$ with anti-analytic symbol $\overline{\phi}$.
\noindent In \cite{AFP}, J. Arazy, S. Fisher and J. Peetre proved that $H_{\overline{\phi}}$ is bounded ($resp.$ compact) on $A^2_\alpha$ if and only if $\phi$ belongs to the Bloch space $\mathcal{B}$ ($resp.$ $\phi$ belongs to the little Bloch space $\mathcal{B}_0$). This result was first proved by S. Axler \cite{Axler} in the case $\alpha=0$.
\noindent The membership in Schatten classes of Hankel operators $H_{\overline{\phi}}$ was also studied by J. Arazy, S. Fisher and J. Peetre in \cite{AFP}. They proved that  $H_{\overline{\phi}}\in \mathcal{S}_p(A^2_\alpha)$, for $p>1$, if and only if $\phi$ belongs to the Besov space $\mathcal{B}_p$ defined by
$$
\mathcal{B}_p= \lbrace \phi \in \text{Hol}(\mathbb{D}),\quad \int_\mathbb{D} |\phi '(z)|^p(1-|z|^2)^{p-2} dA(z)<\infty\rbrace.
$$
For $p\leq 1$, they proved that $H_{\overline{\phi}}\in \mathcal{S}_p(A^2_\alpha)$ if and only if $H_{\overline{\phi}}=0$ (which means that $\phi$ is a constant). These results were extended by Galanopoulos and Pau in \cite{GP} to large Bergman spaces associated with radial weights.\\
\indent Let $\omega =: e^{-\varphi }$ be a weight on $\mathbb{D}$ such that $\varphi $ is a regular subharmonic function and let $A^2 _\omega $ be the weighted Bergman space given by 
$$
	A^2_{\omega}= \big\{ f \in \text{Hol} (\mathbb{D}):\ \ \| f \| _\omega := \left ( \displaystyle \int _\mathbb{D} |f(z)|^2dA_\omega(z) \right )^{1/2} <\infty\big\},\quad dA_\omega := \omega dA.
	$$
As before, the Hankel operator with anti-analytic symbol ${\overline {\phi}}$ is the operator $H_{\overline {\phi}}: L^2(dA_\omega )\to L^2(dA_\omega ) \ominus A^2_\omega$ given by  
\begin{equation*}
H_{\overline {\phi}} f= {\overline {\phi}}f-P_\omega ({\overline {\phi}}f),
\end{equation*}
where  $P_\omega$ is the orthogonal projection from $L^2(dA_\omega )$ onto $A^2_{\omega}$.\\

\noindent For a class of radial rapidily decreasing weights $\omega (z)= e^{-\varphi (|z|)}$, Galanopoulos and Pau proved in \cite{GP} that $H_{\overline {\phi }}$ is bounded (resp. compact) if and only if  $\displaystyle \frac{|\phi '|^2 }{\Delta \varphi}$ is bounded (resp. $\displaystyle \lim _{|z|\to 1^-}\frac{|\phi '(z)|^2 }{\Delta \varphi(z)} = 0$). They also prove, for $p>1$, that $H_{\overline {\phi }}\in S_p $  if and only if $\displaystyle \frac{|\phi '|^2 }{\Delta \varphi}  \in L^ {p/2}(\Delta \varphi dA)$.\\

Our goal in this paper is to study the asymptotic behavior of the singular values of $H_{\overline{\phi}}$. We will consider the class of weights $\cal {W}^\ast$ which covers all previous examples (see Section \ref {pre}). In order to state our main results, we introduce some notations.
The reproducing kernel of $A^2_\omega$ is denoted by $K$,
 $$
\tau _\omega (z) = \frac{1}{\omega ^{1/2}(z)\|K_z \|}\quad \mbox{ and}\quad  d\lambda _\omega = \frac{dA}{\tau _\omega ^2 }.$$
It should be noted that in several, but not all, situations $\tau _\omega ^2 $ is comparable to $1/\Delta \varphi$. For more information, see the examples given in Section \ref{pre}.\\
By analogy with the standard case, we write $\mathcal{B}^\omega $ (resp. $\mathcal{B}^\omega_0$) for the space of analytic functions $\phi$ on $\DD$ such that $ \sup_{z\in\mathbb{D} }   \tau _\omega (z)|\phi'(z)|<\infty $   
(resp. $ \displaystyle \lim _{|z|\to 1^-}   \tau _\omega (z)|\phi'(z)|=0)$.\\
The following theorem will play an important role in the sequel.
\begin{thm}\label{main result 1}
Let  $\omega \in \cal {W}^\ast$ and let $\phi \in \mathcal{B}^\omega_0$. Let $h: [0,+\infty [ \to  [0,+\infty [$ be an increasing convex  function  such that $h(0)=0$. Then there exists $B>0$, which depends only on $\omega$, such that
	$$
	\displaystyle \int _\mathbb{D} h\left ( \frac{1}{B}\tau _\omega(z)|\phi '(z)|\right ) d\lambda _\omega (z) \leq \mbox{\rm {Tr} }( h(|H_{\overline{\phi}}|) ) \leq \displaystyle \int _\mathbb{D} h\left ( B\tau_\omega (z)|\phi '(z)|\right ) d\lambda _\omega (z).
	$$
\end{thm}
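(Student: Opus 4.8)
The plan is to compare $H_{\overline\phi}$ to a diagonal-type operator built from a suitable lattice decomposition of $\DD$, and to use the two-sided quantity $\tau_\omega(z)|\phi'(z)|$ as the "local singular value density." I would begin by recalling the pointwise formula for the Hankel operator with anti-analytic symbol. Since $P_\omega$ projects onto analytic functions and $\overline\phi f$ is not analytic, one has $\overline\partial(H_{\overline\phi} f)=\overline\partial(\overline\phi f)=\overline{\phi'}\,f$ (the analytic part is annihilated by $\overline\partial$), so $H_{\overline\phi}f$ is the $\overline\partial$-$L^2(dA_\omega)$ minimal solution of the equation $\overline\partial u=\overline{\phi'}f$. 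This identifies $H_{\overline\phi}$ with the composition of multiplication by $\overline{\phi'}$ followed by the minimal-solution operator for $\overline\partial$, and reduces everything to controlling that minimal solution in the weighted $L^2$ norm via Hörmander-type $L^2$ estimates. The weight class $\cal{W}^\ast$ is presumably engineered precisely so that these estimates hold with the Bergman metric length scale $\tau_\omega$.

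Next I would set up a covering of $\DD$ by a lattice $(z_k)$ adapted to the metric $d\lambda_\omega$, with disks $D_k$ of radius comparable to $\tau_\omega(z_k)$, so that $\tau_\omega$ and $|\phi'|$ are essentially constant on each $D_k$ (the latter using $\phi\in\mathcal B^\omega_0$ to get the right regularity and smallness near the boundary) and $d\lambda_\omega(D_k)\asymp 1$. The key technical device is a family of "localized" reproducing-kernel test functions $k_{z_k}$ normalized in $A^2_\omega$, which are approximately orthogonal and well concentrated on $D_k$ at the scale $\tau_\omega$. Applying $H_{\overline\phi}$ to these test functions and using $\overline\partial$-estimates, I expect to show that $\|H_{\overline\phi}k_{z_k}\|_\omega \asymp \tau_\omega(z_k)|\phi'(z_k)|$, which morally diagonalizes $H_{\overline\phi}$ with respect to the lattice. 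The upper bound on $\mathrm{Tr}\,h(|H_{\overline\phi}|)$ would then follow by dominating $H_{\overline\phi}$ (in the sense of singular values) by a block-diagonal operator whose diagonal entries are the numbers $\tau_\omega(z_k)|\phi'(z_k)|$, invoking convexity and the trace–sum estimate $\mathrm{Tr}\,h(|H_{\overline\phi}|)\le\sum_k h(B\,\tau_\omega(z_k)|\phi'(z_k)|)$, and converting the sum back into the integral $\int_\DD h(B\tau_\omega|\phi'|)\,d\lambda_\omega$ using $d\lambda_\omega(D_k)\asymp 1$. For the lower bound I would exhibit, from the approximately orthonormal system $(k_{z_k})$ and the lower estimate $\|H_{\overline\phi}k_{z_k}\|_\omega\gtrsim\tau_\omega(z_k)|\phi'(z_k)|$, a lower bound for the singular values via a min–max / restriction-to-subspace argument, again transferring the sum to the integral.

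The two convexity inputs I would lean on are: first, for any operator $T$ and any orthonormal system $(e_k)$, $\sum_k h(\langle |T|e_k,e_k\rangle)\le \mathrm{Tr}\,h(|T|)$ when $h$ is convex with $h(0)=0$ (this gives the lower bound once the diagonal is understood), and dually a majorization argument (e.g. Ky Fan / Horn-type inequalities for singular values together with Jensen's inequality) to pass from the block structure to the upper bound. The main obstacle, I expect, is making the "approximate diagonalization" quantitatively rigorous: the test functions $k_{z_k}$ are only approximately orthogonal, and $H_{\overline\phi}k_{z_k}$ only approximately captures the $k$-th singular direction, so controlling the off-diagonal errors — showing they are summable/negligible at the scale $\tau_\omega$ uniformly in the convex function $h$ — is delicate. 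This is where the precise properties of $\cal{W}^\ast$ (doubling of $\tau_\omega$, subharmonicity estimates on $\varphi$, and decay of the normalized kernel off the diagonal) must be used, and where the constant $B$ depending only on $\omega$ is produced. A secondary subtlety is that the constant $B$ must be uniform over all convex $h$; I would handle this by proving the pointwise comparison of the singular-value sequence with the rearranged lattice sequence $(\tau_\omega(z_k)|\phi'(z_k)|)$ once and for all, independently of $h$, and only afterward applying $h$.
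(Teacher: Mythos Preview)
Your overall architecture matches the paper: the $\overline\partial$-minimal-solution identification of $H_{\overline\phi}f$, a $\tau_\omega$-lattice $(z_k)$ with $\lambda_\omega(R_k)\asymp 1$, and testing against normalized kernels $k_{z_k}$. But the two bounds are executed differently from your sketch, and your closing remark contains a genuine overreach. For the upper bound the paper does not attempt any block-diagonal domination of $H_{\overline\phi}$. Instead the $\overline\partial$-estimate is used once to produce the \emph{operator inequality} $H_{\overline\phi}^\ast H_{\overline\phi}\lesssim T_{\mu_\phi}$, where $d\mu_\phi=\tau_\omega^2|\phi'|^2\,dA$; then Weyl monotonicity reduces the problem to a trace estimate for the Toeplitz operator $T_{\mu_\phi}$, and the paper invokes a prior result of El-Fallah--El~Ibbaoui giving $\sum_n \tilde h(\lambda_n(T_{\mu}))\le\sum_n \tilde h\big(C\,\mu(R_n)/A(R_n)\big)$ for general $\omega\in\mathcal W$ and convex $\tilde h$. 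That Toeplitz black box is the substantive ingredient you have not identified; a direct Ky~Fan/Horn majorization of $H_{\overline\phi}$ by a block-diagonal operator is not available here, because $H_{\overline\phi}$ is not close to diagonal in any workable basis.

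For the lower bound, the obstacle you flag (the $k_{z_k}$ are not orthonormal) is sidestepped rather than confronted. The paper proves a clean abstract lemma: if unit vectors $(u_n)\subset H$ and $(v_n)\subset K$ each satisfy a Bessel bound $\sum_n|\langle u_n,\cdot\rangle|^2\le C\|\cdot\|^2$, then $\sum_n h(|\langle Tu_n,v_n\rangle|)\le C\sum_n h(s_n(T))$ for every increasing convex $h$ with $h(0)=0$. The Bessel bound for $u_n=k_{z_n}$ is exactly boundedness of a discrete-measure Toeplitz operator (known for $\omega\in\mathcal W$), and for $v_n=\chi_{bR_n}H_{\overline\phi}k_{z_n}/\|\chi_{bR_n}H_{\overline\phi}k_{z_n}\|$ it follows trivially from the finite overlap of the $bR_n$; no off-diagonal error summation is needed. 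Finally, your plan to ``prove the pointwise comparison of the singular-value sequence with the rearranged lattice sequence once and for all, and only afterward apply $h$'' is too strong and would not go through: the theorem asserts only the two-sided trace inequality, and the paper obtains the pointwise comparison $s_n(H_{\overline\phi})\asymp \mathcal R^+_{\phi,\omega}(n)$ only \emph{a posteriori}, under an additional regularity hypothesis on the decay profile. In general that pointwise comparison can fail, so it cannot be the mechanism by which the constant $B$ is made uniform in $h$.
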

Let us denote by $\mathcal {R}^+_{\phi,\omega}$ the decreasing rearrangement of the function $\tau_\omega |\phi '|$ with respect to $d\lambda _\omega$. Namely,
$${\mathcal {R}} _{\phi,\omega}^+(x) := \sup \{ t \in (0, \| \tau \phi '\| _{\infty}]:\ \cR _{\phi,\omega}(t) \geq x\}, $$
where  $\cR_{\phi,\omega}$ is the distribution function given by
$$
\mathcal{R}_{\phi ,\omega}(t) := \lambda _\omega( \{ z \in \mathbb{D}: \ \tau_\omega (z)|\phi ' (z)|  > t   \} ).
$$
As a first consequence of Theorem \ref{main result 1}, we prove that if $\rho $ is an increasing function  such that $\rho (x)/x^ \gamma$ is decreasing for some $\gamma <1$, then  
$$
 s_n(H_{\overline{\phi}} ) \asymp 1/ \rho (n) \Longleftrightarrow  \mathcal {R}^+_{\phi,\omega}(n)  \asymp 1/ \rho (n).
$$
The next result is motivated by a problem raised in  \cite {AFP, AFJP} . We prove in the following theorem that the critical decay of $(s_n(H_{\overline{\phi}}))_n$ is achieved by the symbol $\phi = z$.
 \begin{thm}\label{main result 3}
 	Let  $\omega \in \cal{W}^\ast$ be such that  $\tau_\omega $ is equivalent to a radial function and let $\phi \in \mathcal{B}^\omega_0$.  Then 
\begin{enumerate}
\item $s_n(H_{\overline{\phi}}) = o (s_n(H _{\overline{z}})) \ \Longrightarrow  
H_{\overline{\phi}}  = 0$.
\item $s_n(H_{\overline{\phi}}) = o (\mathcal {R}^+_{z,\omega}(n)) \ \Longrightarrow  
H_{\overline{\phi}}  = 0$.
\end{enumerate}
 \end{thm}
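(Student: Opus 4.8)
The plan is to argue by contraposition: assuming $\phi$ is non-constant (so $\phi'\not\equiv0$), I will show that the partial sums of the singular values of $H_{\overline{\phi}}$ are bounded below by those of $H_{\overline{z}}$, contradicting either decay hypothesis. Both implications rest on the same estimate once one recalls that $H_{\overline{z}}$ is not of trace class (by the Arazy--Fisher--Peetre / Galanopoulos--Pau cut-off, since $z$ is non-constant). Thus $\sum_k s_k(H_{\overline{z}})=\infty$, and equivalently $\int_{\mathbb D}\tau_\omega\,d\lambda_\omega=\sum_k\mathcal R^+_{z,\omega}(k)=\infty$.

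The first step is to upgrade Theorem \ref{main result 1} from the full trace to Ky Fan norms (partial sums of singular values). For $t\ge0$ the function $h_t(x)=(x-t)_+$ is convex, non-decreasing and vanishes at $0$, so Theorem \ref{main result 1} applies to it and bounds $\mathrm{Tr}(|H_{\overline{\phi}}|-t)_+$ above and below by dilates of $\int_{\mathbb D}(\tau_\omega|\phi'|-s)_+\,d\lambda_\omega$. Combining this with the elementary identities $\sum_{k\le n}s_k(T)=\inf_{t\ge0}\bigl(nt+\mathrm{Tr}(|T|-t)_+\bigr)$ and $\inf_{t\ge0}\bigl(nt+\int_{\mathbb D}(f-t)_+\,d\lambda_\omega\bigr)=\int_0^n f^{\ast}$, valid for the decreasing rearrangement $f^{\ast}$ of any $f\ge0$, I obtain the two-sided comparison
$$\frac1B\int_0^n\mathcal R^+_{\phi,\omega}(s)\,ds\ \le\ \sum_{k\le n}s_k(H_{\overline{\phi}})\ \le\ B\int_0^n\mathcal R^+_{\phi,\omega}(s)\,ds ,$$
and the analogous bounds for $H_{\overline{z}}$ with $\mathcal R^+_{z,\omega}$. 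This reduces everything to comparing the integrated rearrangements of $\tau_\omega|\phi'|$ and of $\tau_\omega$.

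The analytic heart is a lower bound on the circular means of $|\phi'|$. Writing $\phi'(z)=z^m\psi(z)$ with $\psi(0)\neq0$ and using the subharmonicity of $\log|\psi|$ (Jensen's inequality on circles) together with the arithmetic--geometric mean inequality, one gets $\frac1{2\pi}\int_0^{2\pi}|\phi'(re^{i\theta})|\,d\theta\ge|\psi(0)|\,r^m\ge c_1>0$ for all $r\in[r_0,1)$. Since $\tau_\omega\asymp\tau(|z|)$ is radial and $d\lambda_\omega=\tau_\omega^{-2}\,dA$, integrating in polar coordinates over the disc $D_n=\{|z|<\rho_n\}$ chosen so that $\lambda_\omega(D_n)=n$ yields $\int_{D_n}\tau_\omega|\phi'|\,d\lambda_\omega\ge c_1\int_{D_n}\tau_\omega\,d\lambda_\omega$, up to a fixed additive constant from the inner disc $\{|z|\le r_0\}$ that is negligible for large $n$ because $\int_{\mathbb D}\tau_\omega\,d\lambda_\omega=\infty$. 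By the Hardy--Littlewood principle $\int_0^n\mathcal R^+_{\phi,\omega}=\sup_{\lambda_\omega(E)=n}\int_E\tau_\omega|\phi'|\,d\lambda_\omega\ge\int_{D_n}\tau_\omega|\phi'|\,d\lambda_\omega$, while for the radial $\tau_\omega$ the supremum defining $\int_0^n\mathcal R^+_{z,\omega}$ is attained, up to constants, on $D_n$. Chaining these gives $\sum_{k\le n}s_k(H_{\overline{\phi}})\geqsim\int_0^n\mathcal R^+_{z,\omega}\geqsim\sum_{k\le n}s_k(H_{\overline{z}})$ for all large $n$.

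To conclude, a decay hypothesis on singular values transfers to partial sums: if $s_n(H_{\overline{\phi}})=o(b_n)$ with $b_n$ equal to $s_n(H_{\overline{z}})$ (for (1)) or $\mathcal R^+_{z,\omega}(n)$ (for (2)), then since $\sum_n b_n=\infty$ the Stolz--Ces\`aro lemma gives $\sum_{k\le n}s_k(H_{\overline{\phi}})=o\bigl(\sum_{k\le n}b_k\bigr)$, contradicting the lower bound just established. Hence $\phi'\equiv0$, i.e.\ $H_{\overline{\phi}}=0$. I expect the main obstacle to be the analytic lower bound on the circular means together with its clean transfer to the measure $\lambda_\omega$; the remaining points are routine, the only delicate one being that $\tau_\omega$ is merely equivalent to a radial function, so that the relevant level sets are discs only up to absolute constants.
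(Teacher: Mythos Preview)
Your argument is correct and shares the same core ingredients with the paper's proof (Theorem~\ref{main result 1} applied to $h_t(x)=(x-t)_+$, the lower bound on circular means of $|\phi'|$, and the divergence of $\int_{\DD}\tau_\omega\,d\lambda_\omega=\int_{\DD}dA/\tau_\omega$), but the packaging is genuinely different. The paper stays with the truncated traces $\mathrm{Tr}\,(|H_{\overline\phi}|-\delta)_+$ and $\mathrm{Tr}\,(|H_{\overline z}|-\delta)_+$ directly: it bounds the first from below by $\frac{K}{2B}\int_{\{\tau_\omega\ge 2B\delta/K\}}dr/\tau_\omega$ (via Jensen on circles), bounds the second from above by a similar integral, and lets $\delta\to0^+$ to force the constants to clash. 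You instead take the infimum over $t$ of $nt+\mathrm{Tr}\,(|T|-t)_+$ to pass to Ky~Fan norms, obtain the two-sided comparison $\sum_{k\le n}s_k(H_{\overline\phi})\asymp\int_0^n\mathcal R^+_{\phi,\omega}$, and finish with Stolz--Ces\`aro. Your route yields the Ky~Fan estimate as an independent byproduct; the paper's route is shorter and avoids the Hardy--Littlewood step altogether.

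One point to tighten: the claim that ``the supremum defining $\int_0^n\mathcal R^+_{z,\omega}$ is attained, up to constants, on $D_n$'' needs $\tau_\omega$ to be (essentially) radially \emph{decreasing}, which is not assumed. The clean fix is to replace the disc $D_n$ by the super-level set $A_n=\{\tilde\tau>t_n\}$ of the radial comparison function $\tilde\tau\asymp\tau_\omega$ with $\tilde\lambda(A_n)=n$; then $\int_0^n\mathcal R^+_{z,\omega}\asymp\int_{A_n}\tilde\tau\,d\tilde\lambda$ by definition of rearrangement, and since $A_n$ is a radially symmetric set (a union of annuli) your circular-mean bound still gives $\int_{A_n}\tilde\tau|\phi'|\,d\tilde\lambda\ge c_1\int_{A_n\setminus r_0\DD}\tilde\tau\,d\tilde\lambda$, which is all you need. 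With this adjustment the chain $\int_0^n\mathcal R^+_{\phi,\omega}\ge\int_{A_n}\tau_\omega|\phi'|\,d\lambda_\omega\gtrsim\int_0^n\mathcal R^+_{z,\omega}-O(1)$ goes through cleanly.
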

\noindent Note that, in general, it is not difficult to estimate $\mathcal {R}^+_{z,\omega}(n)$. For example, if $\tau  ^2_\omega (z) \asymp (1-|z|^2)^{2+\beta}$ with $\beta \geq 0$, then   $\mathcal {R}^+_{z,\omega}(n)\asymp n^{-\frac{2+\beta}{2(1+\beta)}}$. Thus, Theorem \ref{main result 3} implies that $$s_n(H_{\overline{\phi}}) = o \left ( n^{-\frac{2+\beta}{2(1+\beta)}} \right ) \Longrightarrow H_{\overline{\phi}}  = 0.
$$

Now our goal  is to describe the class of functions  $\phi \in \mathcal{B}^\omega_0$ such that $\left ( s_n(H_{\overline{\phi}}) \right )_n$ has the critical decay. For simplicity, we state our result only in the case $\tau^2_\omega (z) \asymp (1-|z|)^ {2+\beta}$.\\
As usual, the Hardy space  $H^p$, $p\geq 1$, consists of analytic functions $f $ on $\DD$ such that
$$\|f\| _{H^p}^p:= \sup_{0\leq r <1} \frac{1}{2\pi} \int_0^{2\pi} |f(re^{i\theta})|^p d\theta <\infty.$$

\begin{thm}\label{main result 2}
	Let  $\omega \in \cal{W}^\ast$ be such that  $\tau^2_\omega (z) \asymp (1-|z|)^ {2+\beta}$ with $\beta \geq 0$. Set $p= \frac{2(1+\beta)}{2+\beta}$ and let  $\phi \in \mathcal{B}^\omega_0$. We have 
	 $$s_n(H_{\overline{\phi}}) =O(1/n^{1/p})\iff \phi ' \in H^{p}.$$	
Furthermore, if $\omega$ is radial and  $s_n(H _{\overline{z}}) \sim \gamma /n^{\frac{1}{p}}$ for some $\gamma \in (0,\infty)$, then 
$$
s_n(H_{\overline{\phi}}) \sim \| \phi ' \| _{H^p}\frac{\gamma}{n^{\frac{1}{p}}},\quad \phi ' \in H^{p}.
$$
\end{thm}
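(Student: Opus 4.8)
The two statements are both driven by Theorem~\ref{main result 1} applied to the convex functions $h_s(t)=(t-s)^+$ ($s>0$), for which $\mathrm{Tr}\,h_s(|H_{\overline\phi}|)=\sum_n(s_n(H_{\overline\phi})-s)^+$. For $p>1$ one checks elementarily that $s_n(H_{\overline\phi})=O(n^{-1/p})$ is equivalent to $\sum_n(s_n-s)^+=O(s^{1-p})$ as $s\to0^+$, and the two-sided estimate of Theorem~\ref{main result 1}, in which the constant $B$ affects only the implied constants, turns the latter into $\int_\DD(g-s)^+\,d\lambda_\omega=O(s^{1-p})$, i.e. $g:=\tau_\omega|\phi'|\in L^{p,\infty}(d\lambda_\omega)$. (At the endpoint $p=1$, i.e. $\beta=0$, one reads $s^{1-p}$ as $\log(1/s)$.) Hence the equivalence in Part~1 reduces to the real-variable statement
\[
\tau_\omega|\phi'|\in L^{p,\infty}(d\lambda_\omega)\iff \phi'\in H^p .
\]
That $p$ is the correct exponent is dictated by scaling: with $\tau_\omega^2\asymp(1-|z|)^{2+\beta}$ the powers conspire so that $\int_\DD g^p\,d\lambda_\omega\asymp\int_\DD|\phi'(z)|^p(1-|z|)^{-1}\,dA(z)$, whose radial part $\int_0^1\!\big(\int_0^{2\pi}|\phi'(re^{i\theta})|^p d\theta\big)\frac{dr}{1-r}$ diverges at exactly the borderline (logarithmic) rate precisely when $\phi'\in H^p$; this is the signature that $H^p$ matches the \emph{weak} space $L^{p,\infty}(d\lambda_\omega)$ and the critical decay $n^{-1/p}$.

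To prove the displayed equivalence I would pass through the non-tangential maximal function $N\phi'$, recalling that $\phi'\in H^p\iff N\phi'\in L^p(\TT)$ with $\|N\phi'\|_{L^p(\TT)}\asymp\|\phi'\|_{H^p}$. Writing $\mu(\tau)=|\{\zeta\in\TT:N\phi'(\zeta)>\tau\}|$, the plan is to establish the two-sided comparison
\[
\lambda_\omega\big(\{g>t\}\big)\asymp t^{-p}\int_t^\infty \tau^{p-1}\mu(\tau)\,d\tau ,
\]
after which $g\in L^{p,\infty}(d\lambda_\omega)$ (left side $\lesssim t^{-p}$) becomes equivalent to $\int_0^\infty\tau^{p-1}\mu(\tau)\,d\tau<\infty$, i.e. $N\phi'\in L^p(\TT)$. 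The upper bound $\lesssim$ is the easy half: decomposing $\{g>t\}$ into the dyadic annuli $1-|z|\asymp2^{-k}$, projecting each slice onto its boundary shadow (where $N\phi'$ exceeds the corresponding threshold $t\,2^{k(1+\beta/2)}$) and summing the resulting geometric series reproduces the integral on the right. The reverse bound $\gtrsim$ is the crux: the naive sub-mean-value estimate on a single Whitney disc loses a logarithm — as the borderline symbol $\phi'(z)=(1-z)^{-1/p}$, for which $\lambda_\omega(\{g>t\})\asymp t^{-p}\log(1/t)$, shows — so one must exploit the cone geometry (or the analyticity of $\phi'$) to recover $L^p$-control of $N\phi'$ from the weak area bound without loss. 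Tracked quantitatively, this comparison also yields the uniform bound $\sup_n n^{1/p}s_n(H_{\overline\phi})\asymp\|\phi'\|_{H^p}$ with constants depending only on $\omega$, which the asymptotic statement requires.

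For the precise asymptotics I would use the factorization underlying Theorem~\ref{main result 1}: since $\overline\partial(\overline\phi f)=\overline{\phi'}f$ for analytic $f$ and $H_{\overline\phi}f$ is the $L^2(dA_\omega)$-minimal solution of $\overline\partial u=\overline{\phi'}f$, one has $H_{\overline\phi}=S\,M_{\overline{\phi'}}$ on $A^2_\omega$, where $S$ is the canonical solution operator for $\overline\partial$ and $M_{\overline{\phi'}}$ is multiplication; in particular $H_{\overline z}=S$. Thus Part~2 says that inserting the multiplier $\overline{\phi'}$ rescales the singular asymptotics of the fixed operator $S=H_{\overline z}$ by the factor $\|\phi'\|_{H^p}$. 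I would deduce the general case from a dense class by an approximation robust enough to keep the sharp constant. Writing $\Lambda^+=\limsup_n n^{1/p}s_n$ and $\Lambda^-=\liminf_n n^{1/p}s_n$, the inequality $s_{m+n}(A+B)\le s_m(A)+s_n(B)$ with $m=\lfloor(1-\varepsilon)N\rfloor$, $n=\lceil\varepsilon N\rceil$ gives
\[
\Lambda^+(H_{\overline\phi})\le(1-\varepsilon)^{-1/p}\,\Lambda^+(H_{\overline{\phi_j}})+\varepsilon^{-1/p}\,\sup_n n^{1/p}s_n(H_{\overline{\phi-\phi_j}}),
\]
and symmetrically a lower bound for $\Lambda^-(H_{\overline\phi})$. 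Taking $\phi_j(z)=\phi(r_jz)$ with $r_j\uparrow1$ (so $\phi_j'$ is continuous on $\overline\DD$ and $\|\phi_j'-\phi'\|_{H^p}\to0$), the last term tends to $0$ by the uniform bound above, while $\Lambda^+(H_{\overline{\phi_j}})=\Lambda^-(H_{\overline{\phi_j}})=\gamma\|\phi_j'\|_{H^p}\to\gamma\|\phi'\|_{H^p}$ once the base case is settled; letting $\varepsilon\to0$ forces $\Lambda^+(H_{\overline\phi})=\Lambda^-(H_{\overline\phi})=\gamma\|\phi'\|_{H^p}$, which is the claim.

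It remains to settle the base case of a symbol with $\phi'$ continuous on $\overline\DD$, and this is the step I expect to be hardest. Here I would localize at the boundary: partition $\TT$ into small arcs $I_1,\dots,I_M$, attach cutoffs $\chi_k$ supported on the associated tents, and use that on each tent $\overline{\phi'}$ is, up to a controlled error, the constant $\overline{\phi'(\zeta_k)}$, so that $S\,M_{\overline{\phi'}}$ is approximated by $\sum_k\overline{\phi'(\zeta_k)}\,S\,M_{\chi_k}$. Since the singular asymptotics of $S=H_{\overline z}$ are generated at the boundary and, for radial $\omega$, are rotation invariant, the arc $I_k$ contributes $\tfrac{|I_k|}{2\pi}\gamma^p t^{-p}$ to the counting function of $H_{\overline z}$; granting that the localized pieces are asymptotically orthogonal, the counting function of $H_{\overline\phi}$ satisfies
\[
\#\{n:\,s_n(H_{\overline\phi})>t\}\ \sim\ \Big(\sum_{k=1}^M|\phi'(\zeta_k)|^p\tfrac{|I_k|}{2\pi}\Big)\gamma^p t^{-p}\ \xrightarrow[M\to\infty]{}\ \|\phi'\|_{H^p}^p\,\gamma^p\,t^{-p},
\]
which is equivalent to $s_n(H_{\overline\phi})\sim\gamma\|\phi'\|_{H^p}\,n^{-1/p}$. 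The delicate points are precisely the asymptotic orthogonality of the boundary pieces and the sharp control of the error from freezing $\phi'$ on each arc, so that no spurious constant survives in the limit $M\to\infty$; everything else is either the real-variable equivalence of the first step or the soft approximation of the third.
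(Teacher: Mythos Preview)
Your reduction of Part~1 to the statement $\tau_\omega|\phi'|\in L^{p,\infty}(d\lambda_\omega)\iff\phi'\in H^p$ is legitimate, and the implication $\phi'\in H^p\Rightarrow g\in L^{p,\infty}$ via the non-tangential maximal function is exactly what the paper does. The gap is in the converse. Your proposed route is the two-sided estimate $\lambda_\omega(\{g>t\})\asymp t^{-p}\int_t^\infty\tau^{p-1}\mu(\tau)\,d\tau$, but this is false already for the borderline symbol you yourself invoke: with $\phi'(z)=(1-z)^{-1/p}$ one has $\mu(\tau)\asymp\tau^{-p}$ for large $\tau$, so the right-hand side is $+\infty$ for every $t>0$, while you correctly compute the left-hand side as $\asymp t^{-p}\log(1/t)$. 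Thus the lower bound $\gtrsim$ cannot hold, and your argument for ``$g\in L^{p,\infty}\Rightarrow\phi'\in H^p$'' collapses. You acknowledge this direction is the crux and gesture at ``cone geometry or analyticity'', but no mechanism is supplied.

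The paper avoids the weak-$L^p$ route entirely for this implication. Instead it feeds the hypothesis $s_n=O(n^{-1/p})$ back into Theorem~\ref{main result 1} with the test functions $h(t)=h_\delta(t^{p_\varepsilon})$, where $p_\varepsilon=(1-\varepsilon)p+\varepsilon<p$ (so $h(t^{p_\varepsilon})$ is convex), obtaining $\int_\DD h_\delta\big((\tau_\omega|\phi'|)^{p_\varepsilon}\big)d\lambda_\omega\lesssim\int_\DD h_\delta\big((K\tau_\omega)^{p_\varepsilon}\big)d\lambda_\omega$. Averaging over $\theta$ via Jensen and using that $\int^1 dr/\tau_\omega^{\,2-p_\varepsilon}(r)=\infty$ (this is where $p_\varepsilon<p$ is essential), the same divergence argument as in the cut-off theorem forces $\|\phi'_\rho\|_{p_\varepsilon}\lesssim K$ uniformly in $\rho$; letting $\varepsilon\to 0$ gives $\phi'\in H^p$. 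This sub-critical exponent trick is the missing idea.

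For Part~2 your outline matches the paper: reduce to $\phi'$ analytic near $\overline\DD$ by dilation (using the quantitative bound $s_n(H_{\overline{\phi-\phi_r}})\lesssim\|\phi'-\phi_r'\|_{H^p}\,n^{-1/p}$, which does follow once Part~1 is proved with tracked constants), then localize on boundary arcs and invoke asymptotic orthogonality. Two small corrections: the factorization $H_{\overline\phi}=H_{\overline z}\,M_{\overline{\phi'}}$ is not literally true (the right-hand side is not defined as written, since $M_{\overline{\phi'}}$ does not land in $A^2_\omega$); the paper writes $H_{\overline\phi}P_\omega=H_{\overline z}P_\omega M_{\overline{\phi'}}+A_\Phi P_\omega$ via the Taylor expansion $\phi(z)-\phi(w)=(z-w)\phi'(w)+(z-w)^2\Phi(z,w)$ and shows the remainder $A_\Phi$ has $s_n(A_\Phi)=O(n^{-2/p+\varepsilon})$, hence is negligible. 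The asymptotic orthogonality of the localized pieces $H_{\overline z}P_\omega M_{\chi_{R_N^{\delta_k}}}$ is established through Pushnitski's theorem, which requires checking that the cross terms $(H_{\overline z}P_\omega M_{\chi_{R^{\delta_i}}})^\ast(H_{\overline z}P_\omega M_{\chi_{R^{\delta_j}}})$ lie in $\Sigma^0_{\psi\circ\sqrt{\cdot}}$; this is done first for disjoint closures and then for adjacent arcs by a limiting argument.
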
 
It should be noted that if $\tau^2_\omega (z) \asymp (1-|z|)^ {2+\beta} \log ^{-\alpha }(2/1-|z|^2)$ with $\alpha \geq 0$, then $\mathcal {R}^+_{z,\omega}(n) \asymp \frac{\log ^{\alpha/1+\beta}(n)}{n ^{1/p}}$. We prove in Theorem \ref{Hp} that if $\beta >0$, then 
$$
s_n(H_{\overline{\phi}}) =O\left ( \frac{\log ^{\alpha/1+\beta}(n)}{n ^{1/p}} \right )\iff \phi ' \in H^{p}.
$$
However, if $\beta = 0$ and $\alpha >0$, curiously, the previous result is not valid. This is the subject of Proposition \ref{CE}.\\

For the standard Bergman spaces $A^2_\alpha$ we have  $\tau^2_{\omega _\alpha} (z) \asymp (1-|z|)^ {2}$. It is known, and can be easily seen, that $s_n(H _{\overline{z}})\sim \frac{\sqrt{1+\alpha}}{n+1}$. Hence, Theorem \ref{main result 2} says that if $\phi \in \cal {B}_0$ then 
 $$s_n(H_{\overline{\phi}}) = O (1/n) \iff \phi ' \in H^1.$$
 And in this case $s_n(H_{\overline{\phi}}) \sim \frac{\sqrt{1+\alpha}}{n+1}\| \phi' \| _{H^1}$.  This last result improves the results obtained in \cite{Dos, ER}. Another example is given in Section \ref {Example}.\\
 
The paper is organized as follows: 
In Section 2 we  introduce all definitions and notations that are used in the rest of the paper.  In section 3, we give a description of boundedness and compactness of Hankel operators $H_{\overline{\phi}}$ on $A^2_\omega$. We establish, in Section 4, an upper and a lower estimates of the Trace of $h( |H_{\overline{\phi}}|)$. The upper estimate is obtained from  H\"{o}rmander type $L^2_\omega$ estimates for $\bar \partial$- equation and from recent estimates obtained by El-Fallah and El Ibbaoui for Toeplitz operators \cite {EE0, EE}. The lower estimate is obtained through a sort of local Berezin transform of $H_{\overline{\phi}}$. Two direct  consequences of trace estimates are obtained by a suitable choice of the convex function $h$. The first one gives a sharp asymptotic estimates of the singular values of  compact operators of $H_{\overline{\phi}}$. The second, presented in Section 5, shows that the critical decay of the sequence $(s_n(H_{\overline{\phi}}))_n$ is achieved by the symbol $\phi = z$. In Section 6, we prove the first assertion of Theorem \ref{main result 2}. The proof is based on Theorem \ref{main result 1} and on estimates of some maximal non-tangential functions. The second part of Theorem \ref{main result 2} is given in Section 7. The proof of this result is based on the first part of Theorem \ref{main result 2} and on a result on asymptotic orthogonality due to A. Pushnitski (see Appendix). Section 8 is devoted to an explicit example.\\
	
Throughout the paper, the notation $A \lesssim B$ means that there is a constant $c$ independent of the relevant variables such that $A \leq c B$. We write $A \asymp B$ if both $A \lesssim B$ and $B \lesssim A$. As usual, the notation $u_n \sim v_n$ means that $\displaystyle\lim_{n\to\infty}u_n/v_n=1$. The letter $C$ will denote an absolute constant whose value may change at different occurrences.\\

\noindent {\bf Acknowledgements}. The authors are grateful to Evgeni Abakumov for several helpful discussions and suggestions.
	
\section{Preliminaries}
\subsection{The class of weights $\mathcal{W}^\ast$}\label{pre}
	
	Throughout this paper, $\omega$ will denote a function from $\DD$ into $]0, \infty[$ which is integrable  with respect to the Lebesgue measure. The associated Bergman space will be denoted by $A^2_{\omega}$. We will also assume that $\omega$ is bounded below by a positive constant on each compact set of $\DD$. This implies that $A^2_\omega $ is a reproducing kernel Hilbert space. The kernel of $A^2_\omega $ will be denoted by $K$.\\
	The orthogonal projection from $L^2_\omega:=L^2(\DD, dA_\omega)$ onto $A^2_\omega$ will be designated by $P_\omega$. It can be represented as follows
$$
P_\omega (f) (z)= \displaystyle \int _\DD f(\zeta)K(z,\zeta)dA_\omega(\zeta),\quad f\in L^2_\omega.
$$	
So, the domain of $P_\omega$ can be extended to all functions $f$ such that $f K_z\in L^1_\omega:=L^1(\DD, dA_\omega)$, for all $z\in\DD$. 
Let $g\in  L^2_\omega$ such that $g K_z \in L^2_\omega$, for all $z\in\DD$. The (big) Hankel operator $H_g$ with symbol $g$ is the densely defined operator on $A^2_\omega$ defined by
\begin{equation*}
	H_g f= g f-P_\omega (g f),
	\end{equation*}
where $f = \displaystyle \sum _{1\leq i \leq n}c_i K_{z_i}$, with $c_i \in \CC$ and $z_i \in \DD$.\\
The explicit formula for $H_g$ is
	\begin{equation*}
	H_g f(z)= \int_\DD (g(z)-g(w)) f(w)K(z,w)dA_\omega(w),\quad z\in\DD.
	\end{equation*}
	We are interested in this paper in anti-analytic symbols on $\DD$, $g = \overline{\phi}$. In this case a direct computation gives the following useful  formula
	\begin{equation}\label{hankel-kernel}
	(H_{\overline{\phi}}  K_a)(z)= ({\overline{\phi}}(z)-{\overline{\phi}}(a))K_a(z),\quad z,a\in\DD.
	\end{equation}

First, we recall the definition of the class of  weights $\mathcal{W}$ introduced in \cite{EMMN}.
Let
	$$
	\tau_\omega(z)= \frac{1}{\omega^{1/2}(z)\,\|K_z\|_\omega}, \quad z\in\DD.
	$$
	Suppose that the reproducing kernel $K$ satisfies the following conditions
	\begin{equation}\label {C1}
	\displaystyle \lim _{|z|\to 1^- }\| K_z\| = \infty \  \text{and}\ \mbox{for every}  \ \zeta \in \DD , \  \ |K(\zeta, z)| = o(\|K_z\|), \qquad |z|\to 1^-.
	\end{equation}
	We will suppose that $\tau _\omega $ is such that 
	\begin{align}\label {C5}
	\tau_\omega(z) = O \left ( 1-|z| \right ),\quad
	z\in\DD,
	\end{align}
	and that there exists constant $\eta >0$ such that for $z,\zeta \in \DD$ satisfying $| z-\zeta| \leq \eta \tau_\omega (z)$, we have
	\begin{align}\label {C4}
	\tau_\omega(z) \asymp \tau_\omega (\zeta)\ \mbox{and}\  \|K _z \| _\omega \|K _\zeta \| _\omega \lesssim |K(\zeta, z)|.
	\end{align}
		If the weight $\omega$ satisfies all the previous conditions, we shall say that the weight $\omega$ belongs to the class $\mathcal{W}$. Note that  $\mathcal{W}$ contains all standard weights. For more information, see the examples listed in \cite{EMMN}.\\

The Laplace operator $\Delta$ is given by $\Delta = \partial \bar{\partial}$, with
	$$\partial := \frac{1}{2}(\frac{\partial }{\partial x}-i\frac{\partial }{\partial y}),\quad  \overline{\partial }:=   \frac{1}{2}(\frac{\partial }{\partial x}+i\frac{\partial }{\partial y}).$$
	
	\noindent Let $\omega= e^{- \varphi} \in\mathcal{W}$ such that $\varphi\in\mathcal{C}^2(\DD)$. We shall suppose that
	\begin{equation}\label {C9}
	\tau_\omega ^2(z) \Delta\varphi(z)\gtrsim 1, \quad z\in\DD,
	\end{equation}
	or, there exist a subharmonic function $\psi: \DD \to \mathbb{R}^+$ and constants $\delta>0$ and $t\in (-1,0)$ such that for all $z\in \DD$ we have
	\begin{equation}\label {C6}
	\tau_\omega ^2(z) \Delta\psi(z)\geq \delta,\ \Delta\varphi(z)\geq t \Delta \psi (z)\ \mbox{and}\ |\partial\psi(z)|^2\leq  \Delta \psi (z).
	\end{equation}
		\begin{deft}
		We say that $\omega\in\mathcal{W}^\ast$ if $\omega\in\mathcal{W}$ and satisfies (\ref{C9}) or (\ref{C6}). 
	\end{deft}
\subsubsection{Examples}
In this subsection, we give three examples which will be our references throughout this paper.
\begin{itemize}
\item Standard Bergman spaces $A^2_{\alpha}$: These spaces are associated  with $\omega _\alpha(z) = \frac{(\alpha +1)}{\pi}(1-|z|^2)^\alpha$, with $\alpha>-1$. 
The reproducing kernel of $A^2_{\alpha}$ is $
K(z,w)=\frac{1}{(1-\overline{w}z)^{\alpha+2}}.
$
Then $$ \tau _{\omega _\alpha }(z) =\sqrt{ \frac{\pi}{\alpha +1}}(1-|z|^2).$$
Clearly, $\omega _\alpha \in  \mathcal{W}$. Note that if $\alpha > 0$, then $ \tau _{\omega _\alpha}(z) \asymp 1/\sqrt{\Delta \varphi}$, where $\varphi = \log 1/\omega _\alpha$. Then $\omega _\alpha$ satisfies (\ref {C9}) and $\omega _\alpha \in \mathcal{W}^\ast$. It is also clear that if $\alpha \in (-1,0]$, then $\omega _{\alpha} $ satisfies (\ref {C6}) with $t = \alpha $ and $\psi (z) = \log (1/1-|z|^2)$. Then $\omega _\alpha \in \mathcal{W}^\ast$.
\item  Harmonically weighted Bergman spaces: In this case we suppose that $\omega $ is a positive harmonic weight. It is proved in \cite {EMMN2}, that $\tau _\omega (z) \asymp 1-|z|^2$ and $\omega \in \mathcal{W}$.  One can easily see that $\varphi  = \log (1/\omega )$ is subharmonic and that $\omega $ satisfies   (\ref {C6}), with $t\in (-1,0)$ and  $\psi (z) = \log (1/1-|z|^2)$. It should be noted that in general $\tau^2 _\omega $ and $1/\Delta  \varphi$ are not comparable.
\item  Large Bergman spaces: The following class of weights was introduced by Hu, Lv and Schuster in \cite {HLS}. It includes the classes considered in \cite{BDK, LR, MO}. Let $\mathcal {L}_0$ be the class of  functions $\tau \in  \mbox{lip} ({\mathbb D}, {\mathbb R}) $ such that $\displaystyle \lim _{|z|\to 1^-}\tau (z) =0$. We denote $\mathcal {W} _0$ the set of weights $\omega = e^{-\varphi} $, where $\varphi \in \cal {C}^2$ is strictly subharmonic and for which there exists $\tau \in \mathcal {L}_0$ such that $\tau \asymp 1/\sqrt{\Delta \varphi}$. One can directly see, from \cite {HLS}, that if $\omega = e^{-\varphi} \in \mathcal {W} _0$, then $\omega = e^{-\varphi} \in \mathcal {W} ^\ast$ and $\tau _\omega \asymp 1/\sqrt{\Delta \varphi}$.
\end{itemize}
\subsection{Some inequalities involving convex functions.}
The following elementary lemma is proved in \cite {EE}.
\begin{lem}\label {Convex1}
Let $(a_n)_{n\geq 1}, (b_n)_{n\geq 1}$ be two decreasing sequences such that $\displaystyle \lim _{n\to \infty} a_n =\displaystyle \lim _{n\to \infty} b_n = 0$ and suppose that there exists $\gamma \in (0,1)$ such that $(n^\gamma b_n)$ is increasing.
 Suppose that there exists $ B >0$ such that 
$$
\displaystyle \sum _{n\geq 1} h\left (b_n / B\right ) \leq  \displaystyle \sum _{n\geq 1} h(a_n) \leq \displaystyle \sum _{n\geq 1} h (Bb_n),
$$
for all increasing convex function $h$. Then $a_n \asymp b_n$.
\end{lem}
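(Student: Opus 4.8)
The plan is to test the hypothesis only against the ramp functions $h_t(x)=(x-t)_+$, $t>0$, which are (non‑strictly) increasing, convex and vanish at $0$. Writing $\Phi_c(t):=\sum_n (c_n-t)_+=\int_t^\infty N_c(s)\,ds$, where $N_c(s)=\#\{n:\ c_n>s\}$ is the counting function of a decreasing null sequence $c$, the three quantities $\sum_n h_t(b_n/B)$, $\sum_n h_t(a_n)$, $\sum_n h_t(Bb_n)$ become $\tfrac1B\Phi_b(Bt)$, $\Phi_a(t)$ and $B\Phi_b(t/B)$. Thus the content of the hypothesis is captured by the scalar inequalities $\tfrac1B\Phi_b(Bt)\le\Phi_a(t)\le B\Phi_b(t/B)$ for all $t>0$ (these in fact encode it fully, since every increasing convex $h$ with $h(0)=0$ is a superposition of such ramps). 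Before starting I would record two elementary consequences of ``$(n^\gamma b_n)$ increasing'' with $\gamma<1$: first, $\sum_{n\le m}b_n\lesssim m\,b_m$ (from $b_n\le (m/n)^\gamma b_m$ and $\sum_{n\le m} n^{-\gamma}\lesssim m^{1-\gamma}$, which uses $\gamma<1$); second, a reverse‑doubling of the counting function, $N_b(2^{-\gamma}s)\ge 2N_b(s)$, obtained by testing the defining inequality at the index $2N_b(s)$. Together these yield the one‑sided comparison $\Phi_b(s)\lesssim s\,N_b(s)$.

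I would first dispatch the easy half, $a_n\lesssim b_n$ (assuming $a_N>0$, else nothing to prove). Since $a$ is decreasing, $\Phi_a(t)\ge N(a_N-t)_+$; choosing $t=a_N/2$ gives $\Phi_a(a_N/2)\ge \tfrac12 N a_N$. On the other hand the hypothesis and $\Phi_b(s)\lesssim sN_b(s)$ give $\Phi_a(a_N/2)\le B\Phi_b(a_N/2B)\lesssim a_N\,N_b(a_N/2B)$. Cancelling $a_N$ leaves $N\lesssim N_b(a_N/2B)$, i.e. $b_m>a_N/2B$ for some index $m\gtrsim N$; the regularity then gives $b_m\lesssim b_N$, whence $a_N\lesssim b_N$.

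The hard part will be the reverse inequality $b_n\lesssim a_n$, because $a$ carries no regularity of its own: a symmetric argument fails, since $\Phi_a$ admits no upper bound of the form $tN_a(t)$, and $\Phi_b$ admits no matching lower bound $\gtrsim sN_b(s)$ when $b$ is very flat (e.g. $b_n\asymp 1/\log n$). My plan is to argue by contradiction and exploit the scale gap in the hypothesis. Suppose $a_N<\varepsilon b_N$ and set the threshold at $t=\varepsilon b_N$. This choice is engineered so that the tail $n\ge N$ of $\Phi_a(t)$ vanishes (there $a_n\le a_N<t$), while the head is tamed by the already‑proved upper bound, $\Phi_a(t)\le\sum_{n<N}a_n\lesssim\sum_{n<N}b_n\lesssim N b_N$. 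Simultaneously, the lower half of the hypothesis and the trivial $\Phi_b(s)\ge sN_b(2s)$ force $\Phi_a(t)\ge\tfrac1B\Phi_b(Bt)\ge t\,N_b(2Bt)=\varepsilon b_N\,N_b(2B\varepsilon b_N)$. Comparing the two bounds gives $\varepsilon\,N_b(2B\varepsilon b_N)\lesssim N$. Note the order of the two halves matters: the upper bound is needed here to control the head.

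The contradiction is then produced by the reverse‑doubling estimate, which amplifies $N_b$ at the small argument $2B\varepsilon b_N$: iterating $N_b(2^{-\gamma}s)\ge 2N_b(s)$ from the reference point $N_b(b_N/2)\ge N$ yields $N_b(2B\varepsilon b_N)\gtrsim \varepsilon^{-1/\gamma}N$. Substituting gives $\varepsilon^{\,1-1/\gamma}\lesssim 1$; since $\gamma<1$ the exponent $1-1/\gamma$ is negative, so the left‑hand side blows up as $\varepsilon\to0^+$. Hence $\varepsilon=a_N/b_N$ cannot fall below a threshold $\varepsilon_0$ depending only on $\gamma$ and $B$, which is exactly $a_N\ge\varepsilon_0 b_N$. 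This step, where the sub‑unit exponent $\gamma<1$ converts a bounded inequality into an impossibility, is the crux of the whole argument; combined with the first half it yields $a_n\asymp b_n$ with constants depending only on $\gamma$ and $B$.
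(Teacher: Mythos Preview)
The paper does not prove this lemma; it simply cites \cite{EE} (``The following elementary lemma is proved in \cite{EE}''). So there is no in-paper proof to compare against.

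Your argument is correct. Testing the hypothesis against the ramp functions $h_t(x)=(x-t)_+$ reduces everything to the integrated counting functions $\Phi_c(t)=\int_t^\infty N_c(s)\,ds$, and the two regularity facts you extract from the monotonicity of $n^\gamma b_n$ (namely $\sum_{n\le m}b_n\lesssim m b_m$, hence $\Phi_b(s)\lesssim s N_b(s)$, and the reverse doubling $N_b(2^{-\gamma}s)\ge 2N_b(s)$) are exactly what is needed. The forward bound $a_N\lesssim b_N$ follows cleanly from $\Phi_a(a_N/2)\ge \tfrac12 N a_N$ together with $\Phi_a\le B\Phi_b(\cdot/B)\lesssim (\cdot)N_b(\cdot)$. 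For the reverse bound your contradiction scheme is sound: the choice $t=\varepsilon b_N$ kills the tail of $\Phi_a$, the already-proved half controls the head by $Nb_N$, and the reverse-doubling iteration from the seed $N_b(b_N/2)\ge N$ gives $N_b(2B\varepsilon b_N)\gtrsim \varepsilon^{-1/\gamma}N$, forcing $\varepsilon^{1-1/\gamma}\lesssim 1$, impossible for small $\varepsilon$ since $\gamma<1$.

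One small phrasing point: when you write ``$b_m>a_N/2B$ for some index $m\gtrsim N$'', what you actually obtain is $N_b(a_N/2B)\ge cN$, i.e.\ $b_{\lfloor cN\rfloor}>a_N/2B$; the passage to $b_N$ then uses either monotonicity (if $cN\ge N$) or the regularity $b_m\le (N/m)^\gamma b_N\le c^{-\gamma}b_N$ (if $cN<N$). This is what you intend, but it is worth spelling out. The use of ramp test functions here is entirely in the spirit of the paper's other arguments (see the proof of Theorem~\ref{main result 3} and Lemma~\ref{Convex3}), so your approach would fit naturally into the text.
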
\label {Convex2}
We also need the following elementary result. For completness, we include the proof.
\begin{lem}\label {Convex3}
Let $A, p $ be two real numbers such that $0<A< p$. Let $\rho $ be an increasing function such that $\rho (x) / x^A$ is decreasing and let $(a_n)_n$ be a decreasing sequence such that, 
$$
 \displaystyle \sum _{n\geq 1} h(a_n) \leq \displaystyle \sum _{n\geq 1} h (1/\rho (n)),
$$
for all increasing function $h$ such that $h(t^{p})$ is convex. Then
$$
a_n \leq C(p,A)/ \rho (n).
$$ 
\end{lem}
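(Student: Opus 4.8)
The plan is to feed the hypothesis a single, carefully chosen test function and then reduce the statement to an elementary averaging estimate that exploits the regularity of $\rho$.

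First I would normalise. Fix an index $N$ and set $\alpha_n:=a_n^{1/p}$ and $\beta_n:=\rho(n)^{-1/p}$; both sequences are nonnegative and decreasing (the second because $\rho$ is increasing). The condition ``$h$ increasing with $h(t^p)$ convex'' is exactly the statement that $h(s)=G(s^{1/p})$ for some increasing convex $G$, in which case $h(\alpha_n^p)=G(\alpha_n)$; so the hypothesis reads $\sum_n G(\alpha_n)\le\sum_n G(\beta_n)$ for every increasing convex $G$. The crucial point is that I do not need the whole family: it suffices to apply this to the single admissible function $h_N(s)=(s^{1/p}-\beta_N)_+$, for which $h_N(t^p)=(t-\beta_N)_+$ is convex. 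This gives $\sum_n(\alpha_n-\beta_N)_+\le\sum_n(\beta_n-\beta_N)_+$.

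Next I would extract a partial-sum (weak majorization) inequality. Using the pointwise bound $\gamma\le\beta_N+(\gamma-\beta_N)_+$ with $\gamma=\alpha_n$ and summing over $1\le n\le N$, then enlarging the positive-part sum to all $n$ and invoking the inequality just obtained, I get $\sum_{n=1}^N\alpha_n\le N\beta_N+\sum_n(\beta_n-\beta_N)_+$. Since $\beta_n\le\beta_N$ for $n>N$, the summand $(\beta_n-\beta_N)_+$ vanishes there, so $\sum_n(\beta_n-\beta_N)_+=\sum_{n=1}^N(\beta_n-\beta_N)$ and the right-hand side collapses to $\sum_{n=1}^N\beta_n$. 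Hence $\sum_{n=1}^N\alpha_n\le\sum_{n=1}^N\beta_n$.

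Finally I would combine monotonicity with the regularity of $\rho$. As $(\alpha_n)$ is decreasing, $N\alpha_N\le\sum_{n=1}^N\alpha_n\le\sum_{n=1}^N\beta_n$. The hypothesis that $\rho(x)/x^A$ is decreasing yields $\rho(n)\ge\rho(N)(n/N)^A$ for $1\le n\le N$, hence $\beta_n\le\beta_N(N/n)^{A/p}$, and therefore $\sum_{n=1}^N\beta_n\le\beta_N N^{A/p}\sum_{n=1}^N n^{-A/p}$. Since $0<A/p<1$, the elementary estimate $\sum_{n=1}^N n^{-A/p}\le\frac{p}{p-A}N^{1-A/p}$ gives $\sum_{n=1}^N\beta_n\le\frac{p}{p-A}N\beta_N$. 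Thus $\alpha_N\le\frac{p}{p-A}\beta_N$, and raising to the power $p$ yields $a_N\le\big(\tfrac{p}{p-A}\big)^p\rho(N)^{-1}$, which is the claim with $C(p,A)=\big(\tfrac{p}{p-A}\big)^p$. I expect the only genuinely delicate point to be the first one, namely recognising that a single test function of the form $(s^{1/p}-\beta_N)_+$ already forces the weak majorization $\sum_{n\le N}\alpha_n\le\sum_{n\le N}\beta_n$, so that the full Hardy--Littlewood--P\'olya machinery can be bypassed; once this is in hand, the averaging and the elementary sum $\sum n^{-A/p}$ are routine.
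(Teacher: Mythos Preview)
Your proof is correct and follows essentially the same route as the paper: both hinge on the single test function $h(s)=(s^{1/p}-c)^+$, for which $h(t^p)=(t-c)^+$ is convex. The paper takes $c=\delta^{1/p}$ generic and then passes to counting functions, obtaining $\Card\{n:a_n\ge 2\delta\}\le C(p,A)\,\Card\{n:\rho(n)\le 1/\delta\}$ before reading off the pointwise estimate; you instead specialise immediately to $c=\beta_N=\rho(N)^{-1/p}$ and extract the weak majorisation $\sum_{n\le N}\alpha_n\le\sum_{n\le N}\beta_n$, then average. Both arguments use the regularity condition on $\rho$ in the same way (to control $\sum_{n\le N}\rho(n)^{-1/p}$ via $\sum n^{-A/p}$). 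Your variant has the small bonus of producing the explicit constant $C(p,A)=\bigl(\tfrac{p}{p-A}\bigr)^{p}$.
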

\begin{proof}
Let $\delta >0$ and consider $h(t)= \left ( t^{1/p}-\delta ^{1/p}\right )^+$. By hypothesis, 
$$
 \displaystyle \sum _{n\geq 1} \left (a_n^{1/p}-\delta ^{1/p}\right )^+ \leq \displaystyle \sum _{n\geq 1} \left (1/\rho ^{1/p}(n)-\delta ^{1/p}\right )^+.
$$
Then we have
$$
c(p) \displaystyle \sum _{a_n \geq 2\delta} a_n^{1/p} \leq \displaystyle \sum _{\rho (n) \leq 1/\delta }1/\rho ^{1/p}(n).
$$
Using the fact that $n^{A/p}/\rho ^{1/p}(n)$ is increasing and the fact that $A/p <1$, we obtain
$$
\delta ^{1/p}\Card \{n:\ a_n\geq 2\delta \}\leq C(p,A)\delta ^{1/p}\Card \{ n : \ \rho (n)\leq 1/\delta\}.
$$
This implies the result.
\end{proof}
The following elementary lemma will be useful in the proof of Theorem \ref{main result 1}.
\begin{lem}\label{0}
Let $H,\ K$ be two Hilbert spaces and let $T: H\to K$ be a compact operator. Suppose there exist $(u_n)_{n\geq 1} \subset H$, $(v_n)_{n\geq 1}\subset K$ such that $\|u_n\| = \|v_n\| =1$ and 
$$
\displaystyle \sum _n |\langle u_n, u\rangle |^2\leq C\|u\|^2 \ \mbox {and }\ \displaystyle \sum _n |\langle v_n, v\rangle |^2\leq C\|v\|^2\quad (u\in H, v\in K),
$$
for some $C>0$. Then for any increasing convex function $h$ such that $h(0)=0$, we have 
$$
\displaystyle \sum _n h( |\langle Tu_n, v_n\rangle | )\leq C \displaystyle \sum _n h(s_n(T)).
$$
\end{lem}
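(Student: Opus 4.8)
The plan is to diagonalize $T$ via its singular value decomposition and then reduce the statement to a Jensen-type inequality for a doubly substochastic matrix. Since $T$ is compact, I write $T = \sum_{k} s_k(T)\,\langle\, \cdot\,, e_k\rangle\, f_k$, where $(e_k)$ and $(f_k)$ are orthonormal systems in $H$ and $K$ respectively. Then $\langle Tu_n, v_n\rangle = \sum_k s_k(T)\,\langle u_n, e_k\rangle\,\overline{\langle v_n, f_k\rangle}$, so setting $c_{nk} := |\langle u_n, e_k\rangle|\,|\langle v_n, f_k\rangle|$ and using that $h$ is increasing, the triangle inequality gives
$$
h(|\langle Tu_n, v_n\rangle|) \le h\Big(\sum_k s_k(T)\, c_{nk}\Big).
$$

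The next step is to control the size of the matrix $(c_{nk})$. By Cauchy--Schwarz in $k$ and Bessel's inequality for the orthonormal systems $(e_k),(f_k)$,
$$
\sum_k c_{nk} \le \Big(\sum_k |\langle u_n, e_k\rangle|^2\Big)^{1/2}\Big(\sum_k |\langle v_n, f_k\rangle|^2\Big)^{1/2} \le \|u_n\|\,\|v_n\| = 1,
$$
so each row sum is at most $1$. Applying the two hypotheses with $u=e_k$ and $v=f_k$ (and $\|e_k\|=\|f_k\|=1$), Cauchy--Schwarz in $n$ yields
$$
\sum_n c_{nk} \le \Big(\sum_n |\langle u_n, e_k\rangle|^2\Big)^{1/2}\Big(\sum_n |\langle v_n, f_k\rangle|^2\Big)^{1/2} \le \sqrt{C}\,\sqrt{C} = C,
$$
so each column sum is at most $C$.

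The main (and essentially only) point is to pass the convex function inside the sum, despite the rows being merely substochastic. Writing $\sigma_n := \sum_k c_{nk} \le 1$, for $\sigma_n > 0$ I use the inequality $h(\lambda x) \le \lambda h(x)$, valid for $\lambda = \sigma_n \in [0,1]$ precisely because $h$ is convex with $h(0)=0$, followed by Jensen's inequality applied to the probability weights $c_{nk}/\sigma_n$:
$$
h\Big(\sum_k s_k(T)\,c_{nk}\Big) = h\Big(\sigma_n \sum_k s_k(T)\frac{c_{nk}}{\sigma_n}\Big) \le \sigma_n \sum_k \frac{c_{nk}}{\sigma_n} h(s_k(T)) = \sum_k c_{nk}\, h(s_k(T)).
$$
When $\sigma_n = 0$ all $c_{nk}$ vanish, $\langle Tu_n, v_n\rangle = 0$, and both sides are $0$, so the inequality holds trivially.

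Finally, summing over $n$, interchanging the two nonnegative sums by Tonelli, and invoking the column-sum bound gives
$$
\sum_n h(|\langle Tu_n, v_n\rangle|) \le \sum_k h(s_k(T)) \sum_n c_{nk} \le C \sum_k h(s_k(T)),
$$
which is the claim, since $\sum_k h(s_k(T)) = \sum_n h(s_n(T))$. The only subtlety worth flagging is that the rows of $(c_{nk})$ are substochastic rather than stochastic; this is exactly what the normalization $h(0)=0$ together with convexity resolves, and without it the bound would acquire a spurious factor.
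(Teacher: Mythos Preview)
Your proof is correct and follows the same approach the paper indicates: the paper's proof consists of the single sentence ``It suffices to use the spectral decomposition of $T$,'' and what you have written is precisely a careful unpacking of that sentence via the singular value decomposition, the doubly substochastic matrix $(c_{nk})$, and Jensen's inequality together with $h(0)=0$.
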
	
\begin{proof}
It suffices to use the spectral decomposition of $T$.
\end{proof}
	\section{Boundedness and compactness of Hankel operators on $A^2_\omega$}
	We need the following  $L^2$-estimates of solutions of the $\overline{\partial}$-equation due to B. Berndtsson (\cite{Bern1}, Theorem 3.1). 
\begin{thm} [\textbf{B. Berndtsson}] \label {d-bar1}
	Let $\Omega$ be a domain in $\mathbb{C}$ and let $\varphi$ and $\psi$ be subharmonic functions in $\Omega$. Assume that $\psi$ satisfies
	\begin{equation}\label {ast}
	|\partial\psi(z)|^2\leq  \Delta \psi (z) , \qquad z\in\Omega.
\end{equation}
	Let $s \in(0,1)$. Then, for any function $g$ on $\Omega$, there exists a solution $u$ to the equation $\overline{\partial}u=g$ such that
	$$
	\int_{\Omega} |u|^2 e^{-\varphi+s\psi} dA \lesssim \int_{\Omega}\frac{|g|^2}{\Delta\psi}  e^{-\varphi+s\psi} dA.
	$$
\end{thm}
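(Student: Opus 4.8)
The plan is to prove this as a weighted Hörmander $L^2$ estimate, in the twisted form underlying the Donnelly--Fefferman inequality. Write the weight as $e^{-\Phi}$ with $\Phi:=\varphi-s\psi$, and denote by $\langle\cdot,\cdot\rangle_\Phi$ and $\|\cdot\|_\Phi$ the inner product and norm of $L^2(e^{-\Phi}\,dA)$. The goal is to solve $\bar\partial u=g$ with $\|u\|_\Phi^2\lesssim\int_\Omega\frac{|g|^2}{\Delta\psi}e^{-\Phi}\,dA$. By the standard duality lemma for the closed densely defined operator $\bar\partial\colon L^2(e^{-\Phi})\to L^2_{(0,1)}(e^{-\Phi})$, such a solution exists as soon as one has the a priori bound $|\langle g,\alpha\rangle_\Phi|^2\lesssim\big(\int_\Omega\frac{|g|^2}{\Delta\psi}e^{-\Phi}\,dA\big)\,\|\bar\partial^{*}_{\Phi}\alpha\|_\Phi^2$ for every $(0,1)$-form $\alpha=a\,d\bar z$ in the domain of the adjoint $\bar\partial^{*}_{\Phi}$ (in one complex variable $\bar\partial\alpha=0$ automatically, so the domain of $\bar\partial$ is no constraint). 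Since $\langle g,\alpha\rangle_\Phi=\int_\Omega g\bar a\,e^{-\Phi}\,dA$, Cauchy--Schwarz against the weight $\Delta\psi$ immediately reduces the whole theorem to the single \emph{basic estimate}
$$\int_\Omega\Delta\psi\,|a|^2\,e^{-\Phi}\,dA\ \lesssim\ \|\bar\partial^{*}_{\Phi}\alpha\|_\Phi^2.$$

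To attack the basic estimate I would start from the one--dimensional Bochner--Kodaira identity, obtained by computing the commutator of $\bar\partial=\partial_{\bar z}$ with its formal adjoint $\bar\partial^{*}_{\Phi}a=-\partial a+a\,\partial\Phi$: for compactly supported smooth $a$,
$$\|\bar\partial^{*}_{\Phi}\alpha\|_\Phi^2=\int_\Omega|\partial_{\bar z}a|^2e^{-\Phi}\,dA+\int_\Omega\Delta\Phi\,|a|^2e^{-\Phi}\,dA.$$
Here lies the heart of the matter, and the main obstacle: the curvature term carries $\Delta\Phi=\Delta\varphi-s\,\Delta\psi$, which is \emph{not} of a definite sign — subharmonicity of $\varphi$ only gives $\Delta\Phi\ge-s\,\Delta\psi$ — so this term cannot simply be discarded and the plain Hörmander estimate is unavailable. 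Everything hinges on trading the negative part $-s\,\Delta\psi$ against a positive quantity, and this is exactly where the hypotheses $|\partial\psi|^2\le\Delta\psi$ and $s<1$ must enter.

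The device for doing this is the twisting technique. The conceptual point is that the admissible weight should be viewed as the genuinely subharmonic reservoir $\Psi:=\varphi+(1-s)\psi$ (so that $\Delta\Psi\ge(1-s)\Delta\psi>0$) together with the Donnelly--Fefferman auxiliary weight $\psi$, since $\psi-\Psi=-\Phi$; the self-bounded gradient condition $|\partial\psi|^2\le\Delta\psi$ is precisely what allows $\psi$ to play the role of such an auxiliary weight. Concretely I would re-run the identity with a positive twist $\tau$ taken to be a bounded function of $\psi$ (together with an auxiliary factor in the Ohsawa--Takegoshi style) rather than $\tau\equiv1$. The twist produces, besides the nonnegative terms, first-order error terms of the shape $\int(\partial\tau)\,a\,\overline{(\cdots)}\,e^{-\Phi}\,dA$; estimating these by Cauchy--Schwarz and using $\partial\tau=\tau\,f'(\psi)\,\partial\psi$ turns every gradient factor into $|\partial\psi|^2\le\Delta\psi$, so that the dangerous term is reabsorbed and one is left with a strictly positive multiple, of size $(1-s)$, of $\int_\Omega\Delta\psi\,|a|^2e^{-\Phi}\,dA$ on the correct side. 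This yields the basic estimate with a constant of order $(1-s)^{-1}$, which is why the statement is restricted to $s\in(0,1)$; the endpoint $s=1$ is the genuine Donnelly--Fefferman estimate and requires a more delicate argument.

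It then remains to dispose of routine technical points. One first establishes the identity and the estimate for smooth, compactly supported $a$, where all integrations by parts are legitimate and boundary terms vanish, and passes to a general $\alpha$ in the domain of $\bar\partial^{*}_{\Phi}$ by a density/Friedrichs mollification argument. For an unbounded or boundary-singular $\Omega$ such as $\DD$ with $\varphi,\psi$ blowing up near $\partial\DD$, one exhausts $\Omega$ by relatively compact subdomains, solves on each with the uniform constant above, and extracts a weak-$L^2(e^{-\Phi})$ limit of the minimal solutions. The only genuinely delicate ingredient is the twisted basic estimate of the previous paragraph, in which the self-bounded gradient hypothesis and the margin $s<1$ do all the work; the rest is bookkeeping.
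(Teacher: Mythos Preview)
The paper does not contain a proof of this theorem: it is stated as a known result of Berndtsson and is cited verbatim from \cite{Bern1}, Theorem~3.1, so there is no ``paper's own proof'' to compare against. Your sketch is a reasonable outline of the Donnelly--Fefferman/Berndtsson twisted H\"ormander technique that underlies the original result, and nothing in it is obviously wrong as a roadmap, but be aware that in the context of the present paper you are simply being asked to quote the theorem, not to reprove it.
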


\begin{lem}\label{dbar}
Let $\omega \in \mathcal{W}^\ast$. There exists $C>0$ such that for any function $g$ on $\DD$, there exists a solution $u$ to the equation $\overline{\partial}u=g$ such that
	$$
	\int_{\DD} |u(z)|^2 \omega(z) dA (z) \leq C \int_{\DD}\tau ^2  (z) |g(z)|^2 \omega(z) dA(z).
	$$
\end{lem}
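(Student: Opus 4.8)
The plan is to deduce everything from Berndtsson's estimate (Theorem \ref{d-bar1}), fed with a carefully chosen pair of subharmonic functions so that its output weight collapses to $\omega=e^{-\varphi}$ while the right-hand density becomes comparable to $\tau_\omega^2|g|^2\omega$. The algebraic mechanism is as follows: apply Theorem \ref{d-bar1} on $\Omega=\DD$ with Berndtsson-weight $\varphi+s\psi$ and auxiliary function $\psi$, where $\psi$ is subharmonic, $s\in(0,1)$, and $\varphi+s\psi$ is subharmonic. Since the exponent produced is $-(\varphi+s\psi)+s\psi=-\varphi$, both sides of Berndtsson's inequality carry the factor $e^{-\varphi}=\omega$, and we obtain a solution $u$ of $\bar\partial u=g$ with
\[
\int_\DD |u|^2\omega\,dA\ \lesssim\ \int_\DD \frac{|g|^2}{\Delta\psi}\,\omega\,dA .
\]
Hence it suffices to produce a subharmonic $\psi$ with (i) $|\partial\psi|^2\le\Delta\psi$ (so that Theorem \ref{d-bar1} applies), (ii) $\varphi+s\psi$ subharmonic for some $s\in(0,1)$, and (iii) $\Delta\psi\gtrsim 1/\tau_\omega^2$, the last giving $1/\Delta\psi\lesssim\tau_\omega^2$ and closing the estimate. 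I split according to the two alternatives in the definition of $\mathcal{W}^\ast$.

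If $\omega$ satisfies (\ref{C6}), the required $\psi$ is handed to us: the prescribed subharmonic $\psi$ already obeys $|\partial\psi|^2\le\Delta\psi$, i.e.\ (i), and $\tau_\omega^2\Delta\psi\ge\delta$, i.e.\ (iii) with $1/\Delta\psi\le\tau_\omega^2/\delta$. For (ii), $\Delta(\varphi+s\psi)=\Delta\varphi+s\Delta\psi\ge(s+t)\Delta\psi\ge0$ as soon as $s\ge-t$; since $t\in(-1,0)$ we may fix any $s\in[-t,1)\subset(0,1)$. Substituting into the displayed inequality and using $1/\Delta\psi\le\tau_\omega^2/\delta$ finishes this case. (The positivity $\psi\ge 0$ in (\ref{C6}) is not needed for the cancellation.)

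If instead $\omega$ satisfies (\ref{C9}), then $\Delta\varphi\gtrsim 1/\tau_\omega^2>0$, so $\varphi$ is strictly subharmonic and the natural candidate is $\psi:=\varphi/M$ for a large constant $M$. Then (ii) is automatic, since $\varphi+s\psi=(1+s/M)\varphi$ is again subharmonic, while (iii) reads $\Delta\psi=\Delta\varphi/M\gtrsim 1/(M\tau_\omega^2)$, and $1/\Delta\psi\lesssim\tau_\omega^2$ follows once more from (\ref{C9}). The one point that is not free is condition (i), which for this $\psi$ amounts to the pointwise gradient bound $|\partial\varphi(z)|^2\le M\,\Delta\varphi(z)$, equivalently $\tau_\omega(z)\,|\partial\varphi(z)|\lesssim 1$. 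Establishing this is the main obstacle of the proof. I would obtain it from interior estimates on the Whitney-type disc $D(z,\eta\tau_\omega(z))$: condition (\ref{C4}) forces $\tau_\omega$ to vary slowly and the reproducing kernel to be comparable there, which pins $\Delta\varphi\asymp\tau_\omega^{-2}$ as essentially constant on that disc, so that a mean-value/Cauchy estimate for the subharmonic $\varphi$ bounds $|\partial\varphi(z)|$ by $(\mathrm{osc}_{D}\varphi)/\tau_\omega(z)\lesssim 1/\tau_\omega(z)$. Conceptually, case (\ref{C9}) is nothing but the classical H\"ormander estimate $\int|u|^2\omega\le\int|g|^2\omega/\Delta\varphi$ for the strictly subharmonic weight $\varphi$; routing it through the cited Theorem \ref{d-bar1} is precisely what makes the gradient bound necessary.
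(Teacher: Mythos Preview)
Your treatment of case (\ref{C6}) is correct and coincides with the paper's: apply Theorem~\ref{d-bar1} to the pair $(\varphi+s\psi,\psi)$ so that the exponent collapses to $-\varphi$; the paper simply fixes $s=-t$ where you allow $s\in[-t,1)$, which is the same thing.

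The gap is in case (\ref{C9}). You route the estimate through Berndtsson with $\psi=\varphi/M$, which forces you to verify $|\partial\varphi|^2\le M\Delta\varphi$. Your sketch for this does not go through. First, condition (\ref{C9}) gives only $\Delta\varphi\gtrsim\tau_\omega^{-2}$, not the two–sided comparison $\Delta\varphi\asymp\tau_\omega^{-2}$ that you invoke. Second, even granting bounded oscillation of $\varphi$ on $D(z,\eta\tau_\omega(z))$, a ``mean-value/Cauchy'' bound $|\partial\varphi(z)|\lesssim(\mathrm{osc}_D\varphi)/\tau_\omega(z)$ is a harmonic-function estimate; for a general $C^2$ subharmonic $\varphi$ one needs in addition control of the second derivatives on the disc, which you do not have. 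So the gradient hypothesis (\ref{ast}) is left unproved.

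The paper sidesteps this completely: in case (\ref{C9}) it does not use Berndtsson at all but invokes the classical H\"ormander $L^2$ estimate, which requires only that $\varphi$ be subharmonic (no gradient condition) and yields a solution with $\int|u|^2e^{-\varphi}\lesssim\int|g|^2e^{-\varphi}/\Delta\varphi$. Then (\ref{C9}) gives $1/\Delta\varphi\lesssim\tau_\omega^2$ and the lemma follows immediately. You yourself note this at the end; that remark \emph{is} the proof of the (\ref{C9}) case, and the detour through $\psi=\varphi/M$ should simply be dropped.
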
	
\begin{proof}
Note that if $\omega $ satisfies (\ref{C9}), then the result comes directly from $\overline{\partial}-$H\"{o}rmander's theorem. On the other hand, suppose that $\omega $ satisfies condition (\ref{C6}), that is there exist a subharmonic function $\psi: \DD \to \mathbb{R}^+$ and constant  $t\in (-1,0)$ such that for all $z\in \DD$ we have
	\begin{equation*}
	\frac{1}{\Delta\psi(z)}\lesssim \tau_\omega ^2(z),\ t \Delta \psi (z) \leq \Delta\varphi(z) \ \mbox{and}\ |\partial\psi(z)|^2\leq  \Delta \psi (z).
	\end{equation*}
Then by Theorem \ref{d-bar1}, applied to the couple of subharmonic functions $(\varphi  - t \psi, \psi)$ with $s = - t$, there exists a solution $u$ to the equation $\overline{\partial}u=g$ such that 
	$$
	\int_{\DD} |u|^2 e^{-\varphi} dA \lesssim \int_{\DD}\frac{|g|^2}{\Delta\psi}  e^{-\varphi}dA.
	$$
 Then, we get
	\begin{equation*}
	\int_\DD |u(z)|^2\omega(z)dA(z) \lesssim \int_\DD  |g(z)|^2 \tau^2_\omega(z)\omega(z)dA(z).
	\end{equation*}
\end{proof}

In the sequel $k_z= \frac{K_z}{\| K_z\|}$ will denote the normalized reproducing kernel of  $A^2_\omega$. The following result describes the boundedness of $H_{\overline{\phi}}$ on $A^2_\omega$ when $\omega\in \mathcal{W}^\ast$.

\begin{thm}\label{bdd}
	Let $\omega \in \mathcal{W}^\ast$. Then, the Hankel operator $H_{\overline{\phi}}$ is bounded on $A^2_\omega$
	if and only if $\phi\in \mathcal{B}^\omega$. In this case $\| H_{\overline{\phi}} \| \asymp \displaystyle \sup _{z\in \DD}\tau _\omega (z) |\phi ' (z)| $, where the implied constants depend only on $\omega$.
\end{thm}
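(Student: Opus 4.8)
The plan is to prove the two implications separately, extracting both halves of the norm equivalence $\|H_{\overline{\phi}}\|\asymp\sup_{z}\tau_\omega(z)|\phi'(z)|$ as we go. The engine of the sufficiency direction is the observation that, for analytic $f$, the vector $H_{\overline{\phi}}f$ is the $L^2_\omega$-minimal solution of a $\overline{\partial}$-equation, which lets us invoke the H\"ormander-type estimate of Lemma~\ref{dbar}. The necessity direction will be obtained by testing against the normalized reproducing kernels $k_a$ and exploiting the pointwise lower bound on the kernel supplied by (\ref{C4}).

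For sufficiency, assume $\phi\in\mathcal{B}^\omega$ and set $M:=\sup_{z}\tau_\omega(z)|\phi'(z)|<\infty$. Working on the dense domain of finite linear combinations of reproducing kernels (where $\overline{\phi}f\in L^2_\omega$, so all projections below are legitimate), one has for analytic $f$ that $\overline{\partial}(\overline{\phi}f)=\overline{\phi'}f$; since $P_\omega(\overline{\phi}f)$ is holomorphic, the function $H_{\overline{\phi}}f=\overline{\phi}f-P_\omega(\overline{\phi}f)$ solves $\overline{\partial}u=\overline{\phi'}f$ and is orthogonal to $A^2_\omega$. Because any two $L^2_\omega$-solutions differ by a holomorphic function, $H_{\overline{\phi}}f$ is exactly the minimal-norm solution. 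Applying Lemma~\ref{dbar} with $g=\overline{\phi'}f$ produces a solution $u$ with $\|u\|_\omega^2\lesssim\int_\DD\tau_\omega^2|\phi'|^2|f|^2\omega\,dA\le M^2\|f\|_\omega^2$, and minimality gives $\|H_{\overline{\phi}}f\|_\omega\le\|u\|_\omega\lesssim M\|f\|_\omega$. Extending by density yields boundedness with $\|H_{\overline{\phi}}\|\lesssim M$.

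For necessity, suppose $H_{\overline{\phi}}$ is bounded. Using (\ref{hankel-kernel}) I would write $\|H_{\overline{\phi}}k_a\|_\omega^2=\int_\DD|\phi(z)-\phi(a)|^2|k_a(z)|^2\omega(z)\,dA(z)$ and restrict to the disc $D(a,r)$ with $r=\eta\tau_\omega(a)$; by (\ref{C5}) this disc lies inside $\DD$ for $\eta$ small. On $D(a,r)$, condition (\ref{C4}) (applied with first point $a$, second point $z$) gives $\|K_a\|_\omega\|K_z\|_\omega\lesssim|K(z,a)|$, whence $|k_a(z)|^2\omega(z)=|K(z,a)|^2\|K_a\|_\omega^{-2}\omega(z)\gtrsim\|K_z\|_\omega^2\omega(z)=\tau_\omega(z)^{-2}\asymp\tau_\omega(a)^{-2}$. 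A change of variables $z=a+rw$ and the Taylor expansion of the analytic function $w\mapsto(\phi(a+rw)-\phi(a))/r$ (whose linear coefficient is $\phi'(a)$) give the elementary bound $\int_{D(a,r)}|\phi(z)-\phi(a)|^2\,dA(z)\gtrsim r^4|\phi'(a)|^2$. Combining,
$$
\|H_{\overline{\phi}}\|^2\ge\|H_{\overline{\phi}}k_a\|_\omega^2\gtrsim\tau_\omega(a)^{-2}\,r^4|\phi'(a)|^2\asymp\tau_\omega(a)^2|\phi'(a)|^2,
$$
so $\sup_a\tau_\omega(a)|\phi'(a)|\lesssim\|H_{\overline{\phi}}\|$, i.e. $\phi\in\mathcal{B}^\omega$ with the matching lower norm bound.

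I expect the main obstacle to be the careful bookkeeping of the two localization inputs in the lower bound: that the reproducing-kernel mass $|k_a|^2\omega$ is genuinely bounded below by $\tau_\omega(a)^{-2}$ on a disc of radius comparable to $\tau_\omega(a)$ — precisely where the second half of (\ref{C4}) enters, and where one must check that $a$ and $z$ satisfy the hypothesis $|a-z|\le\eta\tau_\omega(a)$ of that condition — together with the admissibility $D(a,\eta\tau_\omega(a))\subset\DD$ guaranteed by (\ref{C5}). A secondary technical point is the domain/density argument in the sufficiency part: one must first secure $\overline{\phi}f\in L^2_\omega$ on the dense test domain (so the $\overline{\partial}$-computation and the minimality identification are valid) before propagating the resulting estimate to all of $A^2_\omega$.
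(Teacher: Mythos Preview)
Your proof is correct and follows essentially the same route as the paper: the sufficiency via the $L^2_\omega$-minimal $\overline{\partial}$-solution and Lemma~\ref{dbar}, and the necessity by testing on $k_a$, localizing to $D(a,\eta\tau_\omega(a))$, and using (\ref{C4}) to control $|k_a|^2\omega$ from below. The only cosmetic difference is that the paper extracts the lower bound $\int_{D(a,r)}|\phi(z)-\phi(a)|^2\,dA(z)\gtrsim r^4|\phi'(a)|^2$ via Cauchy's integral formula followed by Cauchy--Schwarz, whereas you obtain it directly from the $L^2$-orthogonality of Taylor monomials; both are equivalent one-line facts.
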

\begin{proof}
	Suppose that $H_{\overline{\phi}}$ is bounded on $A^2_\omega$. Fix $a$ in $\DD$ and let $\delta \leq \eta$ ($\eta$ is the constant which appears in (\ref {C4})). By the  formula (\ref{hankel-kernel}), we have
	\begin{align*}
	\int_\DD |H_{\overline{\phi}} k_a(z)|^2 \omega(z) dA(z) &= \int_\DD |\phi(z)-\phi(a)|^2 |k_a(z)|^2 \omega(z) dA(z)\\
	&\geq \int_{D(a,\delta\tau_\omega(a))} |\phi(z)-\phi(a)|^2 |k_a(z)|^2 \omega(z) dA(z).
	\end{align*} 
	Using the fact that $|k_a(z)|^2\asymp K(z,z)$ and the fact that $\tau_\omega(z)\asymp \tau_\omega(a)$ when $z\in D(a,\delta\tau_\omega(a))$, we have
	\begin{equation*}
	\int_{D(a,\delta\tau_\omega(a))} |\phi(z)-\phi(a)|^2 \Vert k_z\Vert^2 \omega(z) dA(z) \asymp \frac{1}{\tau^2_\omega(a)}\int_{D(a,\delta\tau_\omega(a))} |\phi(z)-\phi(a)|^2 dA(z).
	\end{equation*}
Then we obtain
$$
\int_\DD |H_{\overline{\phi}} k_a(z)|^2 \omega(z) dA(z) \gtrsim \frac{1}{\tau^2_\omega(a)}\int_{D(a,\delta\tau_\omega(a))} |\phi(z)-\phi(a)|^2 dA(z).
$$
By Cauchy's representation formula, we get
	$$
	\tau_\omega(a) |\phi'(a)|\lesssim \frac{1}{\tau^2_\omega(a)}\int_{D(a,\delta\tau_\omega(a))} |\phi(z)-\phi(a)| dA(z).
	$$
	It follows that 
	\begin{align*}
	\tau^2_\omega(a) |\phi'(a)|^2 &\lesssim \frac{1}{\tau^2_\omega(a)}\int_{D(a,\delta\tau_\omega(a))} |\phi(z)-\phi(a)|^2 dA(z)\\
	& \lesssim \int_\DD |H_{\overline{\phi}} k_a(z)|^2 \omega(z) dA(z)\\
	&= \Vert H_{\overline{\phi}} k_a\Vert^2\\
	&\leq \Vert H_{\overline{\phi}}\Vert^2.
	\end{align*}
	Hence, $\phi\in \mathcal{B}^\omega$.
	
	Suppose now that $\sup_{z\in\DD} \tau_\omega(z) |\phi'(z)|<\infty$ and let $f\in A^2_\omega$. By Lemma \ref{dbar}, there exists a  solution $u$ to the equation
	\begin{equation}\label{d-bar} 
	\overline{\partial}u= \overline{\phi'} f 
	\end{equation} 
	satisfying
	\begin{equation*}
	\int_\DD |u(z)|^2\omega(z)dA(z) \lesssim \int_\DD \tau^2_\omega(z) |\phi'(z)|^2 |f(z)|^2 \omega(z)dA(z).
	\end{equation*}
	Therefore, since $H_{\overline{\phi}}f$ is the $L^2_\omega-$minimal solution to the equation $(\ref{d-bar})$, we have
	\begin{equation}\label{d-bar2}
	\int_\DD |H_{\overline{\phi}}f(z)|^2\omega(z)dA(z) \lesssim \int_\DD \tau^2_\omega(z) |\phi'(z)|^2 |f(z)|^2 \omega(z)dA(z).
	\end{equation}
	Hence
	$$
	\Vert H_{\overline{\phi}}\Vert \lesssim \sup_{z\in\Omega} \tau_\omega(z) |\phi'(z)|<\infty.
	$$
\end{proof}  
Note that from equation (\ref {d-bar2}), if $(f_n)_n$ is an orthonormal basis of $A^2_\omega$ then 

\[
\begin{array}{lll}
\displaystyle \sum _n\| H_{\overline{\phi}}f_n\|^2& \lesssim  &\displaystyle \sum _n \displaystyle \int _ {\mathbb {D}} |f_n (z)|^2\tau ^2_\omega(z)|\phi ' (z)|^2\omega (z)dA(z)\\
& \asymp&\displaystyle \int _ {\mathbb {D}}\| K_z \| ^2\tau ^2_\omega(z)|\phi ' (z)|^2\omega (z)dA(z)\\
& = &\displaystyle \int _ {\mathbb {D}} |\phi ' (z)|^2dA(z).
\end{array}
\]
Consequently, if $\phi \in \cal {B}_2$ then $H_{\overline{\phi}} \in \cal {S}_2$. We will see in the next paragraph that the converse is also true.\\
We have the following description of the compactness of Hankel operators.
\begin{thm}
	Let $\omega \in \mathcal{W}^\ast$. Then, the Hankel operator $H_{\overline{\phi}}$ is compact on $A^2_\omega$
	if and only if $\phi\in \mathcal{B}_0^\omega$.
\end{thm}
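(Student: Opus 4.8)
The plan is to prove the compactness characterization by mirroring the structure of the boundedness proof (Theorem \ref{bdd}), replacing each uniform bound by its ``little-oh'' analogue. First I would establish the easy direction: assume $H_{\overline{\phi}}$ is compact. Since the normalized reproducing kernels $k_a$ converge weakly to $0$ as $|a|\to 1^-$ (this follows from condition (\ref{C1}), namely $\|K_z\|\to\infty$ and $|K(\zeta,z)|=o(\|K_z\|)$, which give $\langle k_a, K_\zeta\rangle\to 0$ for each fixed $\zeta$, hence weak convergence to $0$ on a dense set), compactness forces $\|H_{\overline{\phi}}k_a\|\to 0$. Then I would reuse verbatim the lower estimate from the proof of Theorem \ref{bdd}: localizing the integral $\int_\DD |H_{\overline{\phi}}k_a|^2\omega\, dA$ to the disc $D(a,\delta\tau_\omega(a))$, using $|k_a(z)|^2\asymp K(z,z)$ and $\tau_\omega(z)\asymp\tau_\omega(a)$ on that disc, and applying Cauchy's representation formula, one obtains
$$
\tau_\omega(a)^2|\phi'(a)|^2 \lesssim \|H_{\overline{\phi}}k_a\|^2.
$$
Since the right-hand side tends to $0$ as $|a|\to 1^-$, we conclude $\lim_{|a|\to 1^-}\tau_\omega(a)|\phi'(a)|=0$, i.e. $\phi\in\mathcal{B}^\omega_0$.

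For the converse, suppose $\phi\in\mathcal{B}^\omega_0$, so $\tau_\omega(z)|\phi'(z)|\to 0$ as $|z|\to 1^-$. The strategy is a standard approximation argument: for $0<r<1$ split $\phi'$ according to whether $z$ lies inside or outside the disc $\{|z|\le r\}$, or equivalently decompose $H_{\overline{\phi}}$ as a ``norm-small'' part plus a ``finite-rank-approximable'' (compact) part. Concretely, I would fix $\varepsilon>0$ and choose $r<1$ with $\tau_\omega(z)|\phi'(z)|<\varepsilon$ for $|z|>r$. Writing the $\overline\partial$-estimate (\ref{d-bar2}) restricted to the region $|z|>r$ shows that the operator $H_{\overline{\phi}}$ is, up to norm $\lesssim\varepsilon$, controlled by the piece of the symbol supported on the compact set $\{|z|\le r\}$. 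That remaining piece corresponds to a symbol whose derivative is supported in a compact subset of $\DD$, and the associated operator is compact (for instance because it is Hilbert--Schmidt: the computation displayed just after Theorem \ref{bdd} shows $\sum_n\|H_{\overline{\phi}}f_n\|^2\lesssim\int_\DD|\phi'|^2\,dA$, which is finite when $\phi'$ is supported on a compact subset of $\DD$). Thus $H_{\overline{\phi}}$ is a norm-limit of compact operators, hence compact.

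The main technical point to handle carefully is the decomposition step in the converse direction: one cannot literally truncate $\phi$ itself and keep it analytic, so the cleanest route is to work at the level of the $\overline\partial$-equation. Given $f\in A^2_\omega$, one solves $\overline\partial u = \overline{\phi'}f$ via Lemma \ref{dbar}, but applies the H\"ormander-type estimate only over $\{|z|>r\}$, using $\tau_\omega(z)|\phi'(z)|<\varepsilon$ there, to see that the $L^2_\omega$-minimal solution (which is $H_{\overline{\phi}}f$) differs from a compact operator applied to $f$ by a term of norm $\lesssim\varepsilon\|f\|_\omega$. I expect the bookkeeping that separates the ``tail'' contribution (estimated by $\varepsilon$) from the ``compact core'' (supported on $\{|z|\le r\}$, hence Hilbert--Schmidt) to be the only genuine obstacle; everything else is a direct transcription of the boundedness argument with $\sup$ replaced by $\lim_{|z|\to1^-}=0$. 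Since the quantitative estimates needed are already in place, the proof reduces essentially to this standard compact-approximation scheme, and the authors' proof will likely be brief.
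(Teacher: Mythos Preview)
Your forward direction is exactly the paper's argument: weak convergence of $k_a$ to $0$ plus the pointwise lower bound $\tau_\omega(a)|\phi'(a)|\lesssim\|H_{\overline{\phi}}k_a\|$ from Theorem~\ref{bdd}.

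For the converse you take a genuinely different route. The paper avoids the non-analytic truncation issue by using dilations: it sets $\phi_r(z)=\phi(rz)$, observes that $\phi_r\in\mathcal{B}_2$ (so $H_{\overline{\phi_r}}$ is Hilbert--Schmidt by the computation you cite), and then uses $\phi\in\mathcal{B}^\omega_0$ to get $\phi_r\to\phi$ in $\mathcal{B}^\omega$, whence $H_{\overline{\phi_r}}\to H_{\overline{\phi}}$ in norm by the quantitative part of Theorem~\ref{bdd}. Your approach instead splits the $\overline\partial$-data $\overline{\phi'}f$ as $\overline{\phi'}\chi_{\{|z|\le r\}}f+\overline{\phi'}\chi_{\{|z|>r\}}f$ and takes $L^2_\omega$-minimal solutions of each piece; since both minimal solutions lie in $(A^2_\omega)^\perp$, they add up to $H_{\overline{\phi}}f$, the inner piece is Hilbert--Schmidt by the same trace computation, and the outer piece has norm $\lesssim\varepsilon$. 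This is correct. The paper's dilation trick is a bit slicker because it keeps the approximating symbol analytic and reduces everything to the already-proved norm estimate, whereas your argument requires checking that the minimal-solution pieces really add up and that the ``compact core'' operator $f\mapsto v_1$ is linear and Hilbert--Schmidt; conversely, your argument does not need the side verification that $\phi_r\to\phi$ in $\mathcal{B}^\omega$.
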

\begin{proof}
Suppose that $H_{\overline{\phi}}$ is compact. Since $\omega \in \mathcal{W}^\ast$, $k_z = \frac{K_z}{\| K_z\|}$ converges weakly to $0$ when $|z|\to 1^-$. Then $\displaystyle \lim _{|z| \to 1^-}\| H_{\overline{\phi}}k_z\|= 0$. By the proof of Theorem \ref{bdd}, we have 
$$  \tau  (z) |\phi ' (z)| \lesssim \| H_{\overline{\phi}}k_z\| .$$
This implies that $\phi\in \mathcal{B}_0^\omega$.\\
Conversely, let $\phi \in \cal {B}^\omega _0$ and let $\phi _r (z)= \phi (rz)$. Clearly, $\phi _r$ converges to $ \phi$  in ${\cal {B}^\omega} $ as $r\to 1^-$. Then by Theorem \ref {bdd} $H_{\overline \phi _r}$ converges to $H_{\overline \phi}$. Now since $\phi _r \in {\cal {B}_2}$, $H_{\overline \phi _r}$ is compact. Hence, $H_{\overline \phi }$ is compact.
 \end{proof}

	\section{Trace estimates for Hankel operators.}
This section is devoted to the proof of Theorem \ref{main result 1}. Before starting the proof, we recall the following covering lemma.
	\begin{lem}(\cite{EMMN})\label {cov}
		Let $X$ be a subset of $\mathbb{C}$ and let $\tau : X \to (0,\infty)$ be a bounded function. Suppose that there are  two constants $\gamma, C > 0$ such that, for every $z,w \in X$ with $ | z-w | < \gamma \tau (z)$, we have 
		$\frac{\tau (z)}{C} \leq \tau (w) \leq C\tau (z)$.
		Set $B= C+1$ and let $\delta \leq \gamma/B$. There exists a sequence $(z_n)_{n \geq 1}\subset X$  such that 
		\begin{enumerate}
			\item $X \subset \displaystyle \cup _{n\geq 1} D(z_n,\delta \tau (z_n))$.
			\item $D(z_n, \delta \tau (z_n)/2C)\cap D(z_m, \delta \tau (z_m)/2C) = \emptyset$ for  $n\neq m$.
			\item For $z\in D(z_n, \delta \tau (z_n))$ we have  $D(z, \delta \tau (z)) \subset D(z_n, B\delta \tau (z_n))$.
			\item $(D(z_n, B\delta \tau (z_n)))_n$ is a covering of $X$ of finite multiplicity.
		\end{enumerate} 
		Such sequences will be  called a $(\tau ,\delta)-$ lattice of $X$. 
	\end{lem}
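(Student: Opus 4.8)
The plan is to construct the sequence $(z_n)$ by a greedy, maximal--disjoint--family selection carried out at the small scale $\rho(z):=\delta\tau(z)/(2C)$, and then to read off the four properties one at a time. Among all disks $\{D(z,\rho(z)):z\in X\}$ I would extract, by Zorn's lemma, a subfamily $\{D(z_n,\rho(z_n))\}$ maximal with respect to being pairwise disjoint; this is exactly property (2). Such a family is automatically countable, since each of its disks contains a distinct point of $\mathbb{Q}^2$, so it may be indexed by $n\ge 1$. Before verifying the rest I would record two elementary consequences of the hypotheses that will be used repeatedly: taking $w=z$ in the comparability assumption forces $C\ge 1$, and $\delta\le\gamma/B=\gamma/(C+1)$ gives $\delta\le\gamma$ and $B\delta\le\gamma$, so that distances of the form $\delta\tau$ or $B\delta\tau$ stay below $\gamma\tau$, which is precisely the regime in which the doubling inequality for $\tau$ may be invoked.

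For property (1), given $z\in X$, maximality shows that $D(z,\rho(z))$ must meet some $D(z_n,\rho(z_n))$ (otherwise it could be adjoined), so $|z-z_n|<\rho(z)+\rho(z_n)=\tfrac{\delta}{2C}(\tau(z)+\tau(z_n))\le\tfrac{\delta}{C}\max(\tau(z),\tau(z_n))<\gamma\max(\tau(z),\tau(z_n))$. Applying the doubling hypothesis centered at whichever of $z,z_n$ carries the larger $\tau$ then yields $\tau(z)\le C\tau(z_n)$, and inserting this back gives $|z-z_n|<\tfrac{\delta}{2C}(C+1)\tau(z_n)=\tfrac{B}{2C}\delta\tau(z_n)\le\delta\tau(z_n)$, the final step using $B/(2C)=(C+1)/(2C)\le 1$ for $C\ge 1$. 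Hence $z\in D(z_n,\delta\tau(z_n))$. Property (3) is then a direct triangle-inequality computation: if $z\in D(z_n,\delta\tau(z_n))$ then $|z-z_n|<\delta\tau(z_n)\le\gamma\tau(z_n)$, so doubling gives $\tau(z)\le C\tau(z_n)$, and any $w\in D(z,\delta\tau(z))$ satisfies $|w-z_n|<\delta\tau(z)+\delta\tau(z_n)\le\delta(C+1)\tau(z_n)=B\delta\tau(z_n)$.

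The covering half of property (4) is immediate from (1) by enlarging radii. I expect the \textbf{finite multiplicity} in (4) to be the main obstacle, as it is the only place an area--packing argument is required. Fix a point $w$ and set $I=\{n:w\in D(z_n,B\delta\tau(z_n))\}$; the goal is to bound $\Card I$ by a constant depending only on $C$. Each $n\in I$ satisfies $|w-z_n|<B\delta\tau(z_n)\le\gamma\tau(z_n)$, and the heart of the matter is to deduce that all the $\tau(z_n)$, $n\in I$, are mutually comparable: this is clean when $w\in X$ (compare every center to $w$ directly through the doubling hypothesis), while for general $w$ one must compare the centers pairwise, and it is here that the thresholds $\gamma,\delta,B,C$ have to be tracked carefully so that the overlap distances stay below the level $\gamma\tau$ at which doubling is legitimate. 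Once a common scale is secured, the pairwise disjoint small disks $D(z_n,\delta\tau(z_n)/2C)$, $n\in I$, have comparable radii and all lie in a single disk about $w$ of radius a fixed multiple of that scale, so comparing total area to the area of the enclosing disk bounds $\Card I$. This comparability step, rather than the geometry of the packing itself, is where essentially all the work of the lemma concentrates.
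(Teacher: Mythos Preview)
The paper does not supply its own proof of this lemma: it is quoted verbatim from \cite{EMMN}. Your argument is the standard Vitali-type construction (maximal pairwise disjoint subfamily at the small scale, then read off (1)--(4)), and it is correct for what is actually claimed. The bookkeeping for (1) and (3) is clean, and the observation that $C\ge 1$ (hence $B/(2C)\le 1$) is exactly what makes the covering radius $\delta\tau(z_n)$ work in (1).

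One small correction in (4). For $w\in X$ your argument is complete: $|w-z_n|<B\delta\tau(z_n)\le\gamma\tau(z_n)$ lets you compare every $\tau(z_n)$ to $\tau(w)$, and the area-packing bound follows. For $w\notin X$, however, the pairwise comparison you propose does \emph{not} go through under the stated hypothesis $\delta\le\gamma/B$: from $m,n\in I$ one gets only
\[
|z_m-z_n|<B\delta\bigl(\tau(z_m)+\tau(z_n)\bigr)\le 2B\delta\max(\tau(z_m),\tau(z_n)),
\]
and invoking the doubling hypothesis would require $2B\delta\le\gamma$, a factor of two stronger than assumed. So ``tracking the thresholds carefully'' will not close this gap. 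The right reading of ``covering of $X$ of finite multiplicity'' is that the multiplicity is bounded \emph{on $X$}; this is all that is used later (where $X=\DD$), and your $w\in X$ argument already establishes it. Simply drop the $w\notin X$ discussion.
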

	We say that $(R_n)_n \in  \cL _\omega $ if  $(R_n)_n= ( D (z_n, \delta \tau _\omega (z_n)))_n$ and satisfies the conditions $(1)-(4)$ of the above lemma. In the sequel we fix  $(R_n)_n= ( D (z_n, \delta \tau _\omega (z_n)))_n \in \cL _\omega$ and $b >1$ such that  $(bR_n)_n=: ( D (z_n, b\delta \tau _\omega (z_n)))_n$ is a covering of $\DD$ of finite multiplicity. \\
		For a positive Borel measure $\mu$ on $\DD$, the Toeplitz operator associated with $\mu $ and defined on $A^2_\omega $ is given by 
	$$
	T_\mu f(z) = \displaystyle \int_\DD f(w) K(z,w)\omega (w)d\mu (w), \quad z\in \DD.
	$$
It is easy to verify that $T_\mu$ satisfies the following remarkable formula
$$
\langle T_\mu f , f\rangle = \displaystyle \int_\DD |f(z)|^2\omega (z)d\mu (z).
$$	
For more properties of Toeplitz operators we refer to \cite {Lue, EE, EE0, EMMN}.\\
The following lemma will be used in the proof of the lower estimate of $\text{Tr}\ h(|  H_{\overline{\phi}}|)$. 
\begin{lem}\label {1}
Let $\omega \in \cW ^\ast$ and let $\phi \in  \mathcal{B}^\omega$. For any increasing convex function $h$ such that $h(0)=0$, we have
$$
\displaystyle \sum _{n}h(\|  \chi _{bR_n}H_{\overline{\phi}}k_{z_n} \|) \lesssim \displaystyle \sum _nh(s_n(H_{\overline{\phi}} )),
$$
where the implied constant depends only on $(bR_n)_n$ and $\omega$.
\end{lem}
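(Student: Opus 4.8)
The goal is to apply the abstract Lemma \ref{0} to the compact operator $T = H_{\overline{\phi}}$, with the two families of vectors chosen to be the normalized reproducing kernels $u_n = k_{z_n}$ living in $A^2_\omega = H$ and localized versions $v_n$ living in the target space $L^2_\omega \ominus A^2_\omega = K$. The natural choice, dictated by the quantity $\|\chi_{bR_n} H_{\overline{\phi}} k_{z_n}\|$ we want to bound from below, is
$$
v_n = \frac{\chi_{bR_n} H_{\overline{\phi}} k_{z_n}}{\|\chi_{bR_n} H_{\overline{\phi}} k_{z_n}\|}.
$$
With this choice $\langle T u_n, v_n\rangle = \langle H_{\overline{\phi}} k_{z_n}, v_n\rangle = \|\chi_{bR_n} H_{\overline{\phi}} k_{z_n}\|$, since $v_n$ is supported on $bR_n$ and is a positive multiple of $\chi_{bR_n} H_{\overline{\phi}} k_{z_n}$ there. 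So the left-hand side of Lemma \ref{0}'s conclusion becomes exactly $\sum_n h(\|\chi_{bR_n} H_{\overline{\phi}} k_{z_n}\|)$, which is what we want, and the right-hand side is $C\sum_n h(s_n(H_{\overline{\phi}}))$. Thus the entire lemma reduces to verifying the two frame-type Bessel bounds hypothesized in Lemma \ref{0}.

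The first Bessel inequality, $\sum_n |\langle k_{z_n}, f\rangle|^2 \lesssim \|f\|^2$ for $f \in A^2_\omega$, is the standard sampling/atomic-decomposition estimate for reproducing kernels over a $(\tau_\omega,\delta)$-lattice. First I would note that $|\langle k_{z_n}, f\rangle|^2 = |f(z_n)|^2/\|K_{z_n}\|^2$, and then invoke the subharmonicity (mean-value) estimate $|f(z_n)|^2 \omega(z_n) \lesssim \frac{1}{\tau_\omega(z_n)^2}\int_{R_n} |f|^2 \omega\, dA$ together with $\|K_{z_n}\|^2 \asymp 1/(\tau_\omega(z_n)^2 \omega(z_n))$ (the definition of $\tau_\omega$). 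Summing over $n$ and using that the dilated disks have finite overlap multiplicity (part (4) of the covering Lemma \ref{cov}) collapses $\sum_n \int_{R_n} |f|^2\omega\,dA \lesssim \|f\|_\omega^2$. These are exactly the ingredients already assembled in the class $\mathcal{W}^\ast$.

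The second Bessel inequality, $\sum_n |\langle v_n, v\rangle|^2 \lesssim \|v\|^2$ for $v \in L^2_\omega \ominus A^2_\omega$, is where the main obstacle lies, but it is in fact cheap given the support structure: each $v_n$ is supported on $bR_n$, so $\langle v_n, v\rangle = \langle v_n, \chi_{bR_n} v\rangle$ and by Cauchy--Schwarz $|\langle v_n, v\rangle|^2 \leq \|v_n\|^2 \int_{bR_n}|v|^2\omega\,dA = \int_{bR_n}|v|^2\omega\,dA$, using $\|v_n\| = 1$. Summing and invoking the finite multiplicity of the covering $(bR_n)_n$ (which was fixed at the start of this section to have finite overlap) gives $\sum_n |\langle v_n, v\rangle|^2 \lesssim \|v\|_\omega^2$ with a constant depending only on the multiplicity of $(bR_n)_n$. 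The one point requiring a word of care is the degenerate case where $\chi_{bR_n} H_{\overline{\phi}} k_{z_n} = 0$ for some $n$: there $v_n$ is undefined, but the corresponding term in the left-hand sum is $h(0)=0$, so such indices may simply be discarded from both the vector family and the sum. With both Bessel bounds in hand and $\phi \in \mathcal{B}^\omega$ guaranteeing $H_{\overline{\phi}}$ is bounded (hence its compact restriction applies on any subsequence; compactness itself follows since we only need $T$ compact, which holds once $\phi \in \mathcal{B}^\omega_0$, and the statement is vacuous otherwise), Lemma \ref{0} delivers the claimed inequality with constant depending only on $(bR_n)_n$ and $\omega$.
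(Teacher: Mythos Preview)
Your proposal is correct and follows essentially the same approach as the paper: apply Lemma~\ref{0} with $u_n=k_{z_n}$ and $v_n=\chi_{bR_n}H_{\overline\phi}k_{z_n}/\|\chi_{bR_n}H_{\overline\phi}k_{z_n}\|$, then verify the two Bessel bounds, the second one via Cauchy--Schwarz and finite overlap exactly as you do. The only cosmetic difference is in the first Bessel bound: the paper writes $\sum_n |f(z_n)|^2/\|K_{z_n}\|^2=\langle T_\mu f,f\rangle$ for the discrete measure $d\mu=\sum_n \frac{1}{\|K_{z_n}\|^2\omega(z_n)}\delta_{z_n}$ and invokes the boundedness of $T_\mu$ from \cite{EMMN}, whereas you argue directly via a weighted sub-mean-value inequality (which is the content behind that citation).
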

\begin{proof}
Let $u_n= k_{z_n}$, $v_n= \frac{\chi _{bR_n}H_{\overline{\phi}}k_{z_n}}{\|  \chi _{bR_n}H_{\overline{\phi}}k_{z_n} \|}$ and remark that $\|  \chi _{bR_n}H_{\overline{\phi}}k_{z_n} \| =\langle H_{\overline{\phi}}  u_n, v_n\rangle $. So, it suffices to prove that the conditions of Lemma \ref{0} are satisfied. Indeed, let $f \in A^2_\omega$ and let $d\mu = \displaystyle \sum _n \frac{1}{\| K_{z_n}\|^2\omega (z_n)}d\delta _{z_n}$. Since $\omega \in \cW$,  $T_\mu$ is bounded \cite {EMMN}. Then we have 
$$
\displaystyle \sum_n |\langle u_n,f\rangle |^2 = \displaystyle \sum _n \frac{|f(z_n)|^2}{\| K_{z_n}\|^2}=\langle T_\mu f,f\rangle  \leq \|T_\mu\| \| f\| ^2.
$$ 
Now let $g\in L^2(\omega dA)$. By Holder inequality, we have 
$$
\displaystyle \sum_n |\langle v_n,g\rangle |^2 = \displaystyle \sum_n |\langle \frac{\chi _{bR_n}H_{\overline{\phi}}k_{z_n}}{\|  \chi _{bR_n}H_{\overline{\phi}}k_{z_n} \|}, \chi _{bR_n}g\rangle |^2 \leq  \displaystyle \sum_n  \| \chi _{bR_n}g \| ^2 \lesssim \|g \|^2.
$$
This completes the proof.
\end{proof}
\begin{proof}[Proof of Theorem \ref{main result 1}]
First, we prove that $ \displaystyle \sum _{n}h\left (s _n(H_{\overline{\phi}} ) \right ) \leq \displaystyle \int _\DD h\left ( C|\phi '(z)|\tau_\omega(z)\right ) d\lambda_\omega (z)$. The equation $(\ref{d-bar2})$ implies that 
	\begin{equation} \label{hankel toeplitz}
	H_{\overline{\phi}}^\ast H_{\overline{\phi}}\lesssim T_{\mu_\phi}.
	\end{equation}
	Then, by the monotonicity Weyl's lemma, we have
\begin{equation} \label{HT}
	s_n^2(H_{\overline{\phi}})=\lambda_{n}(H_{\overline{\phi}}^\ast H_{\overline{\phi}}) \lesssim   \lambda _n(T_{\mu _\phi}).
	\end{equation}
Let $\tilde{h}(t) = h(\sqrt{t})$. 
By Theorem 4.5 of \cite{EE}, we have
	\begin{align*}
	\sum h(s_n(H_{\overline{\phi}})) & = \sum \tilde{h}(s_n^2(H_{\overline{\phi}}))\\
	&\leq \sum \tilde{h}( C\lambda _n(T_{\mu _\phi}))\\
	&\leq \sum  \tilde{h}(C \frac{\mu _\phi(R_n)}{A(R_n)})\\
	&=  \sum  \tilde{h} \left( C \frac{1}{A(R_n)}\int_{R_n} \tau_\omega^2(z) |\phi '(z)|^2 dA(z)\right) \\
	& \leq \sum \tilde{h}\left( C \int_{R_n}  |\phi '(z)|^2 dA(z)\right).
	\end{align*}
\noindent By subharmonicity, we have for all $z\in R_n$
	\begin{align*}
	|\phi '(z)|
	&\lesssim \frac{1}{A(R_n)}\int_{bR_n}  |\phi '(\zeta)| dA(\zeta).
	\end{align*}
\noindent Then,
$$
\int_{R_n}  |\phi '(z)|^2 dA(z) \lesssim \frac{1}{ A(R_n)} \left ( \int_{bR_n}  |\phi '(\zeta)| dA(\zeta)\right )^2\asymp  \left ( \int_{bR_n}  |\phi '(\zeta)| \tau _\omega (\zeta)d\lambda_\omega (\zeta)\right )^2.
$$
 Since $h$ is convex we obtain
	\begin{align*}
 \tilde{h}\left( C \int_{R_n}  |\phi '(z)|^2 dA(z)\right) &\leq  \tilde{h}\left( C   \left ( \int_{bR_n}  |\phi '(\zeta)|  \tau _\omega (\zeta)d\lambda_\omega(\zeta)\right )^2 \right) \\
 & \leq   h \left( C   \int_{bR_n}  |\phi '(\zeta)|  \tau _\omega (\zeta)d\lambda_\omega(\zeta)\right ) \\
 & \leq    \int_{bR_n}  h  \left (C |\phi '(\zeta)|\tau _\omega (\zeta) \right ) d\lambda_\omega(\zeta).
	\end{align*}
Combining these inequalities and the fact that $(bR_n)_n$ is of finite multiplicity, we get
$$
\sum h(s_n(H_{\overline{\phi}})) \leq  \int _\DD h\left( C|\phi '(z)| \tau_\omega(z)\right) d\lambda_\omega (z).
$$
	
\noindent Now we prove the lower inequality by using Lemma \ref{1}. We have 
\[
\begin{array}{lll}
\|  \chi _{bR_n}H_{\overline{\phi}}k_{z_n} \|&\asymp&\left ( \displaystyle \int _{bR_n} |\phi (z)-\phi (z_n)|^2\|K_z\|^2\omega (z)dA(z)\right )^{1/2}\\
&\asymp&\left ( \displaystyle \int _{bR_n} |\phi (z)-\phi (z_n)|^2d\lambda _\omega(z)\right )^{1/2}.\\
\end{array}
\]
By Cauchy's formula we have 
	$$
	\tau_\omega(z)|\phi '(z)| \lesssim \left ( \displaystyle \int _{bR_n}|\phi (\zeta)-\phi (z_n)|^2 d\lambda_\omega(\zeta)\right ) ^{1/2}\asymp \|  \chi _{bR_n}H_{\overline{\phi}}k_{z_n} \|, \quad z \in R_n.
	$$
Then
$$
\displaystyle \int _\DD h(\tau _\omega (z) |\phi '(z)|d\lambda _\omega (z)  \asymp  \displaystyle \sum _n \displaystyle \int _{R_n}h(\tau _\omega (z) |\phi '(z)|d\lambda _\omega (z)\leq  \displaystyle \sum _n h(C  \|  \chi _{bR_n}H_{\overline{\phi}}k_{z_n} \| ).$$
By Lemma \ref {1}, we obtain the desired result.
\end{proof}
\noindent Let us denote by 
$$
{\cal{B}}^{\omega,p}=\{ \phi \in {\cal{B}}^\omega:\  \int _\DD \left( |\phi '(z)| \tau_\omega(z)\right)^p d\lambda_\omega (z)<\infty\}\quad (p>0).
$$
Note that ${\cal{B}}^{\omega,2} = {\cal {B}_2}$ is the classical Dirichlet space and it doesn't depend on the weight $\omega$. Note that since $\tau _\omega (z) = O((1-|z|^2))$, ${\cal{B}}^{\omega,p} = \{0\}$ whenever $p \leq 1$. Using standard arguments, one can easily prove that an analytic function $f$ on $\DD$ belongs to ${\cal{B}}^{\omega,p}$ if and only if 
$$
\displaystyle \sum _{n}\left ( \frac{\mu (R_n)}{A(R_n)}\right )^p< \infty, \  \mbox{where}\  d\mu (z)= \tau _\omega (z)|f'(z)|dA(z)\ \text{and}\ (R_n)_n\in \cL _\omega.
$$
In particular, this implies that the family $({\cal{B}}^{\omega,p} )_p$ is increasing.\\
The following result, which extends the main results in \cite {AFP, GP}, is a direct consequence of Theorem \ref {main result 1}.
\begin{cor}
Let $p\geq 1$ and let $\omega \in \cal{W}^\ast$. Let $\phi \in {\cal{B}}^\omega_0$. Then 
$$H_{\overline{\phi}}  \in {\cal{S}}_p(A^2_\omega)\quad \iff \quad \phi \in {\cal{B}}^{\omega,p}.$$
\end{cor}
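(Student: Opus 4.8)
The plan is to derive the Corollary directly from the two-sided trace estimate of Theorem \ref{main result 1} by specializing the convex function $h$ to a power. Since $\phi \in \mathcal{B}^\omega_0$, the operator $H_{\overline{\phi}}$ is compact, so $|H_{\overline{\phi}}|$ admits a spectral decomposition whose eigenvalues are precisely the singular values $s_n(H_{\overline{\phi}})$. Consequently, for every increasing convex $h$ with $h(0)=0$ one has the identity $\mathrm{Tr}(h(|H_{\overline{\phi}}|)) = \sum_n h(s_n(H_{\overline{\phi}}))$, and Theorem \ref{main result 1} becomes a statement about this series.

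First I would fix $p \geq 1$ and take $h(t) = t^p$. This $h$ is increasing and convex on $[0,\infty)$ with $h(0)=0$, so it is an admissible test function in Theorem \ref{main result 1}. Exploiting the homogeneity $h(ct) = c^p h(t)$, the constant $B$ factors out of both integrals, and the theorem yields
$$
\frac{1}{B^p}\int_\DD \big(\tau_\omega(z)|\phi'(z)|\big)^p \, d\lambda_\omega(z) \;\le\; \sum_n s_n(H_{\overline{\phi}})^p \;\le\; B^p \int_\DD \big(\tau_\omega(z)|\phi'(z)|\big)^p \, d\lambda_\omega(z).
$$
By definition, $H_{\overline{\phi}} \in \mathcal{S}_p(A^2_\omega)$ exactly when the middle quantity $\sum_n s_n(H_{\overline{\phi}})^p = \|H_{\overline{\phi}}\|_{\mathcal{S}_p}^p$ is finite, while $\phi \in \mathcal{B}^{\omega,p}$ is precisely the finiteness of $\int_\DD (\tau_\omega|\phi'|)^p \, d\lambda_\omega$. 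Since the displayed inequalities show these two quantities to be comparable, with constants depending only on $\omega$ and $p$, one is finite if and only if the other is, which is exactly the claimed equivalence.

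There is no serious obstacle here: all the analytic work is already carried by Theorem \ref{main result 1}, and the remaining points are routine. One checks that $t \mapsto t^p$ is genuinely convex for $p \ge 1$ — this is precisely why the statement is restricted to $p \ge 1$, since for $p<1$ the power is concave and the theorem no longer applies, consistent with the earlier observation that $\mathcal{B}^{\omega,p}=\{0\}$ for $p \le 1$ — and that the trace of $|H_{\overline{\phi}}|^p$ against the spectral measure of the compact operator coincides with $\sum_n s_n(H_{\overline{\phi}})^p$. I would close by remarking that this recovers and extends the Schatten-class characterizations of \cite{AFP, GP}, since in the standard and radial settings the pair $(\tau_\omega, d\lambda_\omega)$ reduces to the quantities appearing there.
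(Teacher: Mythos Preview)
Your proof is correct and is exactly the argument the paper has in mind: the corollary is stated as ``a direct consequence of Theorem~\ref{main result 1}'', and specializing $h(t)=t^p$ is the intended (and only natural) way to extract it.
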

As a consequence of Theorem \ref{main result 1}, we give some estimates of the singular values of compact Hankel operators. To this end, let us recall that $\mathcal{R}_{\phi ,\omega}(t) := \lambda _\omega( \{ z \in \mathbb{D}: \ \tau_\omega (z)|\phi ' (z)|  > t   \} )
$. 
Let $\mathcal {R}^+_{\phi,\omega}$ be the increasing rearrangement of the function $\tau_\omega |\phi '|$.
For any increasing function $h$, by a standard computation, we have 
$$
\displaystyle \int _\DD h\left ( \tau_\omega(z)|\phi '(z)|\right ) d\lambda_\omega (z)= \int_{0}^{\infty} \cR_{\phi,\omega}(t) dh(t).
$$
Then, there exists $B>0$ which depends only on $\omega$ such that 
\begin{equation}\label {10}
\displaystyle\sum_{n\geq 0}h \left (\frac{1}{B}\cR _{\phi,\omega}^+(n)\right) \leq \displaystyle \int _\DD h\left ( \tau_\omega(z)|\phi '(z)|\right ) d\lambda_\omega (z)\leq \displaystyle \sum_{n\geq 0}h(B\cR _{\phi,\omega}^+(n)).
\end{equation}
As a consequence of Theorem \ref{main result 1}, we obtain the following result.
\begin{thm}\label{equiv}
Let $\omega \in \cal {W}^*$ and let $\phi \in {\cal{B}}_0^\omega$. Let $\rho $ be an increasing function  such $\rho (x)/x^ \gamma$ is decreasing for some $\gamma \in (0,1)$. Then  
$$
 s_n(H_{\overline{\phi}} ) \asymp 1/ \rho (n) \Longleftrightarrow  \mathcal {R}^+_{\phi,\omega}(n)  \asymp 1/ \rho (n).
$$
\end{thm}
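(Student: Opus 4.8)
The plan is to read Theorem~\ref{equiv} as a consequence of three earlier ingredients: the two-sided trace estimate of Theorem~\ref{main result 1}, the rearrangement inequality (\ref{10}), and the abstract comparison Lemma~\ref{Convex1}. The guiding idea is that Theorem~\ref{main result 1} together with (\ref{10}) already yields a two-sided comparison, valid for \emph{every} increasing convex $h$ with $h(0)=0$, between the sums $\sum_n h(s_n(H_{\overline{\phi}}))$ and $\sum_n h(\mathcal{R}^+_{\phi,\omega}(n))$; one then feeds this into Lemma~\ref{Convex1} using the common reference sequence $b_n=1/\rho(n)$, whose regularity carries the monotonicity hypothesis.

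Concretely, I would first record that $\mathrm{Tr}(h(|H_{\overline{\phi}}|))=\sum_n h(s_n(H_{\overline{\phi}}))$ and that for any $C>0$ the function $t\mapsto h(Ct)$ is again increasing, convex and vanishing at $0$, so it may be substituted into both Theorem~\ref{main result 1} and (\ref{10}). Chaining the upper bound of Theorem~\ref{main result 1} with (\ref{10}) applied to $t\mapsto h(Bt)$, and the lower bound with (\ref{10}) applied to $t\mapsto h(t/B)$, produces, with $B_0:=B^2$,
\[
\sum_n h\!\left(\tfrac{1}{B_0}\mathcal{R}^+_{\phi,\omega}(n)\right)\ \le\ \sum_n h\!\left(s_n(H_{\overline{\phi}})\right)\ \le\ \sum_n h\!\left(B_0\,\mathcal{R}^+_{\phi,\omega}(n)\right)
\]
for all increasing convex $h$ with $h(0)=0$. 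Both sequences $(s_n(H_{\overline{\phi}}))_n$ and $(\mathcal{R}^+_{\phi,\omega}(n))_n$ are decreasing and tend to $0$: the first because $\phi\in\mathcal{B}^\omega_0$ makes $H_{\overline{\phi}}$ compact, and the second because $\tau_\omega|\phi'|\to 0$ at the boundary forces each superlevel set to be relatively compact, hence of finite $\lambda_\omega$-measure.

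For the implication $(\Leftarrow)$, assume $\mathcal{R}^+_{\phi,\omega}(n)\asymp 1/\rho(n)$ and set $a_n=s_n(H_{\overline{\phi}})$, $b_n=1/\rho(n)$. Since $\rho(x)/x^\gamma$ is decreasing, $x^\gamma/\rho(x)$ is increasing, so $(n^\gamma b_n)$ is increasing. Inserting $\mathcal{R}^+_{\phi,\omega}(n)\asymp b_n$ into the displayed bound and using that $h$ is increasing, one absorbs $B_0$ and the two $\asymp$-constants into a single $B_1$ to obtain $\sum_n h(b_n/B_1)\le\sum_n h(a_n)\le\sum_n h(B_1 b_n)$; Lemma~\ref{Convex1} then gives $s_n(H_{\overline{\phi}})\asymp 1/\rho(n)$. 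For $(\Rightarrow)$, assume $s_n(H_{\overline{\phi}})\asymp 1/\rho(n)$ and now set $a_n=\mathcal{R}^+_{\phi,\omega}(n)$. To move $\mathcal{R}^+_{\phi,\omega}$ to the centre I would re-apply the displayed inequality with $h$ replaced by $t\mapsto h(B_0 t)$ in its left half and by $t\mapsto h(t/B_0)$ in its right half, yielding $\sum_n h(\mathcal{R}^+_{\phi,\omega}(n))\le\sum_n h(B_0 s_n)$ and $\sum_n h(s_n/B_0)\le\sum_n h(\mathcal{R}^+_{\phi,\omega}(n))$; combining with $s_n\asymp b_n$ gives $\sum_n h(b_n/B_2)\le\sum_n h(a_n)\le\sum_n h(B_2 b_n)$, and Lemma~\ref{Convex1} again delivers $\mathcal{R}^+_{\phi,\omega}(n)\asymp 1/\rho(n)$.

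There is no deep obstacle here; the work is the careful bookkeeping of multiplicative constants when transferring the two-sided \emph{integral} estimate of Theorem~\ref{main result 1} into a two-sided \emph{sum} estimate via (\ref{10}) and then matching it to the single-constant hypothesis of Lemma~\ref{Convex1}. The one genuinely structural point worth emphasizing is \emph{why} the statement is routed through the auxiliary function $\rho$ rather than asserting the stronger $s_n(H_{\overline{\phi}})\asymp\mathcal{R}^+_{\phi,\omega}(n)$ directly: Lemma~\ref{Convex1} requires the comparison sequence $(n^\gamma b_n)$ to be increasing, and we have no a priori control showing that either $(n^\gamma s_n(H_{\overline{\phi}}))_n$ or $(n^\gamma\mathcal{R}^+_{\phi,\omega}(n))_n$ is increasing. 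Choosing $b_n=1/\rho(n)$, whose monotonicity is guaranteed by the hypothesis that $\rho(x)/x^\gamma$ is decreasing, is exactly what supplies this missing regularity and makes Lemma~\ref{Convex1} applicable in both directions.
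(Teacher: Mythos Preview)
Your proposal is correct and follows exactly the paper's approach: chain Theorem~\ref{main result 1} with the rearrangement inequality~(\ref{10}) to obtain the two-sided sum comparison between $s_n(H_{\overline{\phi}})$ and $\mathcal{R}^+_{\phi,\omega}(n)$, then invoke Lemma~\ref{Convex1} with $b_n=1/\rho(n)$. The paper compresses all of this into two sentences; your version spells out the constant bookkeeping, the two directions separately, and the structural reason for routing through $\rho$, but the argument is the same.
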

\begin{proof}
By Theorem \ref{main result 1} and inequalities (\ref{10}), there exists $B>0$ such that for every increasing  convex function $h$ we have
$$
\displaystyle\sum_{n\geq 0}h \left (\frac{1}{B}\cR _{\phi,\omega}^+(n)\right) \leq \displaystyle \sum _{n\geq 0} h(s_n(H_{\overline {\phi }}))\leq \displaystyle \sum_{n\geq 0}h(B\cR _{\phi,\omega}^+(n)).
$$ 
By Lemma \ref{Convex1}, we obtain the desired result.  
\end{proof}
\begin{thm}\label{majoration}
Let $\omega \in \cal {W}^*$ and let $\phi \in {\cal{B}}_0^\omega$. Let $\rho $ be an increasing function such that $\rho (x)/x^A$ is decreasing for some $A>0$. Suppose that $\cR _{\phi,\omega} ^+(x) = O(1/\rho (x))$, as $x\to \infty$. Then 
$$s_n(H_{\overline \phi}) = O(1/\rho (n)), \quad n\to \infty.$$
\end{thm}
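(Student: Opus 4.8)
The plan is to derive the conclusion from Lemma~\ref{Convex3}, applied to the decreasing sequence $a_n=s_n(H_{\overline{\phi}})$ and compared with $1/\rho(n)$. Since $A>0$ is fixed, I first fix once and for all an exponent $p>\max(A,1)$. Once I verify the hypothesis of Lemma~\ref{Convex3} for this $p$, the lemma returns $s_n(H_{\overline{\phi}})\leq C(p,A)/\rho(n)$, which is exactly the desired majorization. Thus everything reduces to proving
\begin{equation*}
\sum_{n} h\big(s_n(H_{\overline{\phi}})\big)\leq \sum_{n} h\big(1/\rho(n)\big)
\end{equation*}
for every increasing $h$ such that $h(t^{p})$ is convex, after a harmless rescaling of $\rho$ by a multiplicative constant.

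To produce this summed inequality I would run the scheme of the upper estimate in Theorem~\ref{main result 1}, but for the larger class of $p$-convex test functions. Its ingredients are the operator bound $s_n^2(H_{\overline{\phi}})\lesssim \lambda_n(T_{\mu_\phi})$ from $(\ref{HT})$, the Toeplitz eigenvalue comparison of \cite{EE}, and an averaging step over the cover $(bR_n)_n$; the substitution $\widetilde h(t)=h(\sqrt{t})$ turns the $p$-convexity of $h$ into the convexity condition required on the Toeplitz side, which is the generality in which \cite{EE} is phrased. This would give $\sum_n h(s_n(H_{\overline{\phi}}))\leq \int_{\DD}h\big(C\tau_\omega(z)|\phi'(z)|\big)\,d\lambda_\omega(z)$. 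Feeding this into the rearrangement identity $(\ref{10})$ — a layer-cake identity valid for arbitrary increasing $h$, needing no convexity — yields $\sum_n h(s_n(H_{\overline{\phi}}))\leq \sum_n h(B\,\mathcal{R}^+_{\phi,\omega}(n))$. Finally, since $\phi\in\mathcal{B}^\omega_0\subset\mathcal{B}^\omega$ the quantity $\tau_\omega|\phi'|$ is bounded, so each $\mathcal{R}^+_{\phi,\omega}(n)$ is finite; together with $\mathcal{R}^+_{\phi,\omega}(n)=O(1/\rho(n))$ and the positivity of $1/\rho$ on finite sets, enlarging the constant gives $\mathcal{R}^+_{\phi,\omega}(n)\leq C_0/\rho(n)$ for all $n$. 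As $h$ is increasing this produces $\sum_n h(s_n(H_{\overline{\phi}}))\leq \sum_n h(BC_0/\rho(n))$, and replacing $\rho$ by $\rho/(BC_0)$ (which preserves the decrease of $\rho(x)/x^A$) puts the estimate in the exact form demanded by Lemma~\ref{Convex3}. Applying that lemma with our fixed $p>A$ and undoing the rescaling gives $s_n(H_{\overline{\phi}})=O(1/\rho(n))$.

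The main obstacle is precisely the passage to non-convex, $p$-convex test functions. Lemma~\ref{Convex3} must be supplied with the comparison for \emph{all} increasing $h$ with $h(t^{p})$ convex, a strictly larger family than the convex functions for which Theorem~\ref{main result 1} is literally stated; and this enlargement is unavoidable, because restricting to convex $h$ yields only the weak majorization $\sum_{n\leq N}s_n(H_{\overline{\phi}})\lesssim \sum_{n\leq N}1/\rho(n)$, which is genuinely insufficient once $A\geq 1$ (for instance with $\rho(n)=n$ it permits a $\log N$ loss at the $N$-th singular value). The delicate point in the extension is the averaging step relating the operator/Toeplitz data to the pointwise rearrangement of $\tau_\omega|\phi'|$: for convex $h$ this is a Jensen inequality over each $bR_n$ using $\lambda_\omega(bR_n)\asymp 1$, whereas for $p$-convex $h$ the naive Jensen bound can fail, so the argument must be reorganized to stay inside the framework of \cite{EE} — working with the averaged sequence $\mu_\phi(R_n)/A(R_n)$ and exploiting the subharmonicity of $|\phi'|$ — rather than invoking a pointwise Jensen estimate. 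This is exactly why one takes $p>A$ large, rather than $p=1$: it is the mechanism by which an arbitrary exponent $A$ is absorbed.
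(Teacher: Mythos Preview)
Your proposal is correct and follows essentially the same route as the paper: reduce to Lemma~\ref{Convex3} via $(\ref{HT})$, the \cite{EE} Toeplitz bound for $p$-convex test functions, a subharmonicity-based averaging over $(bR_n)_n$, and $(\ref{10})$. The only cosmetic difference is that the paper applies Lemma~\ref{Convex3} to $\lambda_n(T_{\mu_\phi})$ against $1/\rho^2(n)$ rather than to $s_n(H_{\overline{\phi}})$ against $1/\rho(n)$, and carries out the averaging step explicitly by using the subharmonicity of $|\phi'|^{2/p}$ to rewrite $\mu_\phi(R_n)/A(R_n)$ as the $p$-th power of an honest average before invoking Jensen on the convex function $t\mapsto h(t^p)$ --- precisely the reorganization you anticipate in your last paragraph.
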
	
\begin{proof}
First, recall that $d\mu _\phi (z)= \tau _\omega ^2(z)|\phi '(z)|^2dA(z)$ and fix $( R_n)_n \in \cL _\omega$. By Inequality (\ref {HT}) it suffices to prove that $\lambda _n(T_{\mu _\phi}) = O(1/\rho ^2(n))$. By \cite{EE}, for any $p>0$ there exists $B>0$ such that, for any increasing function $h$ such that $h(t^p)$ is convex and $h(0)=0$, we have 
$$
\displaystyle \sum h(\lambda _n (T_{\mu _\phi})) \leq \displaystyle \sum h(B {\mu _\phi}(R_n)/A(R_n)).
$$
Let $b>1$ such that  $((bR_n)_n)\in \cL _\omega$.  By subharmonicity of $|\phi '|^{2/p} $, we have 
$$
\tau ^2_\omega (z) |\phi '(z)|^2\leq \left ( \frac{C(b,p,\omega) }{A(R_n)}\displaystyle \int _{bR_n}\tau ^{2/p}_\omega (\zeta) |\phi '(\zeta)|^{2/p}dA(\zeta)\right )^p,\quad \text {for all}\ z\in R_n.
$$
We obtain from the convexity of $h_p(t)=: h(t^p)$ that 
\[
\begin{array}{lll}
 \displaystyle \sum h(B {\mu _\phi}(R_n)/A(R_n)) &\leq & \displaystyle \sum h _p\left ( \frac{C(b,p,\omega) }{A(R_n)}\displaystyle \int _{bR_n}\tau ^{2/p}_\omega (\zeta) |\phi '(\zeta)|^{2/p}dA(\zeta)\right )\\
 \\
 & \leq & \displaystyle \sum  \frac{1}{A(R_n)}\displaystyle \int _{bR_n}h(C(b,p,\omega)  \tau ^{2}_\omega (\zeta) |\phi '(\zeta)|^{2})dA(\zeta)\\
 \\
 & \lesssim & \displaystyle \int _{\DD}h( C(b,p,\omega)  \tau ^{2}_\omega (\zeta) |\phi '(\zeta)|^{2})d\lambda _\omega (\zeta).\\
 \end{array}
 \]
Combining these inequalities with (\ref {10}) and the hypothesis that $\cR^+_{\phi,\omega}(n)\lesssim 1/\rho(n)$ we obtain 
$$
\displaystyle \sum h(\lambda _n( (T_{\mu _\phi})) \leq \displaystyle \sum_{n\geq 0}h(B\left ( \cR _{\phi,\omega}^+(n)\right )^2) \leq  \displaystyle \sum_{n\geq 0}h(B/\rho ^2(n)),
$$
where $B$ depends on $\omega, b, p$ and $A$. The result comes from Lemma \ref{Convex3}.
\end{proof}

\section{The cut-off }

In this section we consider weights $\omega \in {\cal {W}^\ast}$ such that  $\tau _\omega$ is equivalent to a radial function. We cite as examples, radial weights $\omega \in \cal{W}^\ast$, positive harmonic weights and  weights $\omega = e^{-\varphi} \in \cal{W}_0$ such that $\Delta \varphi $ is equivalent to a radial function.
\begin{proof}[Proof of Theorem \ref {main result 3}]
Suppose that $s_n(H_{\overline{\phi}}) = o (s_n(H_{\overline{z}}))$. Let $\delta \in (0,1)$ and let $h_\delta (t)= (t-\delta)^+$. By Theorem \ref{main result 1} we have
$$
\displaystyle \int _{\mathbb{D}}h_\delta \left (\frac{1}{B}\tau _\omega (z)|\phi '(z)|\right )d\lambda _\omega(z)\leq \displaystyle \sum _{n\geq 1}h_\delta (s_n(H_{\overline{\phi}})).
$$
Let $\rho \in (1/2,1)$ and put $K= \displaystyle \int _0^{2\pi}|\phi '(\rho e^{it})|\frac{dt}{2\pi}$ . By Jensen's inequality we have
\[
\begin{array}{lll}
\displaystyle \int _{\mathbb{D}}h_\delta \left (\frac{1}{B}\tau _\omega (z)|\phi '(z)|\right )d\lambda _\omega(z)&\geq &
2\displaystyle \int _0 ^1h_\delta \left (\frac{1}{B}\tau _\omega (r)\displaystyle \int _0^{2\pi}|\phi '(r e^{it})|\frac{dt}{2\pi}\right )\frac{rdr}{\tau _\omega ^2(r)}\\
&\geq &
\displaystyle \int _\rho ^1h_\delta \left (\frac{1}{B}\tau _\omega (r)\displaystyle \int _0^{2\pi}|\phi '(\rho e^{it})|\frac{dt}{2\pi}\right )\frac{dr}{\tau _\omega ^2(r)}\\
& = & \displaystyle \int _\rho ^1\left (\frac{K}{B}\tau _\omega (r) - \delta \right )^+\frac{dr}{\tau _\omega ^2(r)}.
\end{array}
\]
Suppose that $\phi ' \neq 0$, then $K >0$. For $\tau _\omega (r) \geq \frac{2B}{K}\delta $, we have $\left (\frac{K}{B}\tau _\omega (r) - \delta \right )^+\geq \frac{K}{2B}\tau _\omega (r)$. Then we obtain 
\begin{equation}\label{E41}
\displaystyle \int _{\mathbb{D}}h_\delta \left (\frac{1}{B}\tau _\omega (z)|\phi '(z)|\right )d\lambda _\omega(z)\geq  \frac{K}{2B}\displaystyle \int _{\{r \in (\rho, 1):\ \tau _\omega (r) \geq \frac{2B}{K}\delta \}}\frac{dr}{\tau _\omega(r)}.
\end{equation}
 Let $\varepsilon \in (0, K/4B^2)$ and let $N$ be such that for $n\geq N$ we have $s_n(H_{\overline{\phi}}) \leq \varepsilon s_n(H_{\overline{z}})$. Using Theorem \ref {main result 1}, we have
\[
\begin{array}{lll}
\displaystyle \sum _{n\geq 1}h_\delta (s_n(H_{\overline{\phi}})) & \leq  & \displaystyle \sum _{n< N}h_\delta (s_n(H_{\overline{\phi}}))+\displaystyle \sum _{n\geq 1}h_\delta (\varepsilon s_n(H_{\overline{z}}))\\
& \leq & N \| H_{\overline{\phi}}\|+ \displaystyle \int _{\mathbb{D}}h_\delta (\varepsilon B \tau _\omega (z))d\lambda _\omega(z)\\
& \leq &  N \| H_{\overline{\phi}}\| +2\varepsilon B \displaystyle \int _{ \{r\in (0,1): \ \tau_\omega (r) \geq \frac{\delta}{\varepsilon B  } \} }\frac{dr}{\tau_\omega (r)}.\\
\end{array}
\]
Since $1/\varepsilon B > 2B/K$, we obtain 
\begin{equation}\label{E42}
\displaystyle \sum _{n\geq 1}h_\delta (s_n(H_{\overline{\phi}}) )\leq N \| H_{\overline{\phi}}\| +2\varepsilon B \displaystyle \int _{\{r\in (0,1): \ \tau_\omega (r) \geq \frac{2B}{K}\delta\} } \frac{dr}{\tau_\omega(r)}.
\end{equation}
Combining inequalities \eqref{E41} and \eqref{E42}, we obtain
$$
 \frac{K}{2B}\displaystyle \int _{\{r \in (\rho, 1):\ \tau _\omega (r) \geq \frac{2B}{K}\delta \}}\frac{dr}{\tau _\omega(r)} \leq N \| H_{\overline{\phi}}\| +2\varepsilon B \displaystyle \int _{\{r: \ \tau_\omega (r) \geq \frac{2B}{K}\delta\} } \frac{dr}{\tau_\omega (r)}.
$$
Since $\displaystyle \int ^1\frac{dr}{\tau _\omega (r)}=\infty$, when $\delta$ goes to $0$, we obtain  $\frac{K}{2B} \leq 2\varepsilon B$. This gives a contradiction.\\
The second assertion is obtained by using the same argument.
\end{proof}


\begin{cor}
Let $\omega \in \cal {W}^*$ be such that  $\tau ^2_\omega (z)\asymp  (1-|z|^2)^{2+\beta}\nu ^2(\log  (\frac{1}{1-|z|^2}))$, where $\beta \geq 0$ and $\nu $ is a monotone function such that $\nu (2t) \asymp \nu (t)$. Let $\phi \in {\cal{B}}_0^\omega$ and let $p = \frac{2(1+\beta)}{2+\beta}$. If 
$$s_n(H_{\overline{\phi}}) = o \left ( \frac{1}{n^{\frac{1}{p}}\nu ^{\frac{1}{1+\beta}}(\log n )}\right ),$$ then $\phi ' =0.$
\end{cor}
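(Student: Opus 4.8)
The result is a direct consequence of the second assertion of Theorem \ref{main result 3}. Since $\tau^2_\omega$ is comparable to the radial function $(1-|z|^2)^{2+\beta}\nu^2(\log\frac{1}{1-|z|^2})$, the standing hypothesis of Theorem \ref{main result 3} (that $\tau_\omega$ be equivalent to a radial function) holds. Thus it suffices to prove that
$$
\mathcal{R}^+_{z,\omega}(n) \asymp \frac{1}{n^{1/p}\nu^{1/(1+\beta)}(\log n)},
$$
for once this is known the hypothesis $s_n(H_{\overline{\phi}}) = o\big(\tfrac{1}{n^{1/p}\nu^{1/(1+\beta)}(\log n)}\big)$ becomes $s_n(H_{\overline{\phi}}) = o(\mathcal{R}^+_{z,\omega}(n))$, and Theorem \ref{main result 3}(2) forces $H_{\overline{\phi}}=0$, i.e. $\phi'=0$. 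For $\phi=z$ we have $\tau_\omega|\phi'| = \tau_\omega$, so $\mathcal{R}^+_{z,\omega}$ is the decreasing rearrangement of $\tau_\omega$ with respect to $d\lambda_\omega = dA/\tau_\omega^2$. Because $\tau^2_\omega$ is comparable to a radial function, both $\tau_\omega$ and the measure $d\lambda_\omega$ are comparable to their radial models, and a decreasing rearrangement is preserved up to constants (and an innocuous rescaling of the variable, which the doubling of the final expression absorbs) when one replaces a function and the underlying measure by comparable ones. Hence I may carry out the computation with $R(s):=(1-s^2)^{(2+\beta)/2}\nu(\log\frac{1}{1-s^2})$ in place of $\tau_\omega$ and $dA/R^2$ in place of $d\lambda_\omega$.

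Next I would compute the distribution function. As $R$ is radial and essentially decreasing in $s=|z|$, the superlevel set $\{R>t\}$ is comparable to a disc $\{|z|<r_t\}$, where $R(r_t)\asymp t$. Writing $u_t=1-r_t^2$, this reads $u_t^{(2+\beta)/2}\nu(\log\tfrac{1}{u_t})\asymp t$. With the substitution $u=1-s^2$,
$$
\mathcal{R}_{z,\omega}(t)\asymp \int_{|z|<r_t}\frac{dA}{R^2}\asymp \int_{u_t}^{1}\frac{du}{u^{2+\beta}\nu^2(\log\frac1u)}.
$$
Because $\nu(2\,\cdot\,)\asymp\nu(\,\cdot\,)$, the factor $\nu^2(\log\frac1u)$ varies slowly and may be frozen at the lower endpoint; the remaining power integral is, for $\beta\ge 0$, dominated by its behaviour near $u_t$, so that
$$
\mathcal{R}_{z,\omega}(t)\asymp \frac{1}{u_t^{1+\beta}\nu^2(\log\frac{1}{u_t})}.
$$

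Finally I would invert this relation. Setting $\mathcal{R}_{z,\omega}(t)=n$ gives $u_t^{1+\beta}\nu^2(\log\tfrac1{u_t})\asymp 1/n$, whence $\log\tfrac1{u_t}\asymp\log n$ (the $\log\nu$ contribution being negligible against the power). Since $\nu(2t)\asymp\nu(t)$ yields $\nu(ct)\asymp\nu(t)$ for every fixed $c>0$, I may replace $\nu(\log\tfrac1{u_t})$ by $\nu(\log n)$, obtaining
$$
u_t\asymp \frac{1}{n^{1/(1+\beta)}\nu^{2/(1+\beta)}(\log n)}.
$$
Substituting into $t=\mathcal{R}^+_{z,\omega}(n)\asymp u_t^{(2+\beta)/2}\nu(\log n)$ and simplifying the exponents (recall $\tfrac1p=\tfrac{2+\beta}{2(1+\beta)}$, so the power of $\nu(\log n)$ is $\tfrac{2+\beta}{1+\beta}-1=\tfrac{1}{1+\beta}$) gives the claimed estimate for $\mathcal{R}^+_{z,\omega}(n)$, and the corollary follows. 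The one delicate point is the treatment of the slowly varying factor $\nu$: one must justify both freezing it inside the integral and evaluating it at $\log n$, and both steps rest entirely on the self-similarity $\nu(2t)\asymp\nu(t)$.
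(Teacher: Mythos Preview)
Your proof is correct and follows essentially the same approach as the paper: reduce to computing $\mathcal{R}^+_{z,\omega}(n)$ and then invoke the second assertion of Theorem~\ref{main result 3}. The paper's version is terser---it writes the distribution function directly as $\mathcal{R}_{z,\omega}(t)\asymp t^{-p}\nu^{-2/(2+\beta)}(\log 1/t)$ rather than routing through the implicit variable $u_t$---but your more explicit inversion arrives at the same answer, and your remarks on the role of the doubling condition $\nu(2t)\asymp\nu(t)$ are exactly the justification the paper leaves implicit.
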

\begin{proof}
It is not difficult to verify that 
$$
\displaystyle \int _\delta ^1\frac{dt}{t^{2+\beta} \nu ^2(\log 1/t)} \asymp \frac{1}{\delta^{1+\beta }\nu ^2(\log 1/\delta)}.
$$
Then, since $\tau ^2_\omega (z)\asymp  (1-|z|^2)^{2+\beta}\nu ^2(\log  (\frac{1}{1-|z|^2})$, we have 
\[
\begin{array}{lll}
\cR_{z,\omega} (t) &=& \displaystyle \int _{\{z\in \DD: \ \tau _\omega (z) \geq t\} }\frac{dA(z)}{(1-|z|^2)^{2+\beta} \nu ^2(\log 1/(1-|z|^2))} \\ \\
& \asymp & \Large \frac{1}{t^{\frac{2(1+\beta)}{2+\beta}} \nu ^{\frac{2}{2+\beta}}(\log 1/t)}.\\
\end{array}
\]
This implies that $\cR_{z, \omega} ^{+}(n)\asymp \frac{1}{n^{\frac{1}{p}}\nu ^{\frac{1}{1+\beta}}(\log n )}$. By the second assertion of Theorem \ref{main result 3}, we get $\phi ' = 0$. 
\end{proof}
Note that if $\omega $ is  radial then 
$$
H_{\overline{z}}^*H_{\overline{z}} \left (\frac{z^n}{\|z^n\|} \right )=\left ( \frac{\|z^{n+1}\|^2}{\|z^{n}\|^2}-\frac{\|z^{n}\|^2}{\|z^{n-1}\|^2} \right )\frac{z^n}{\|z^n\|}=: m^2_\omega (n)\frac{z^n}{\|z^n\|} ,\quad n\geq 1.
$$ 
So, the sequence of the singular values of $H_{\overline{z}}$ is the decreasing rearrangement of  the sequence $(m_\omega (n))_{n\geq 1}$.\\
For the standard Bergman spaces $A^2_\alpha$, it is easy to see that 
$$
\|z^n\|^2=\frac{ \Gamma (n+1)\Gamma(\alpha+2)}{\Gamma(n+\alpha+2)}\ \mbox{and}\ 
m^2_{\omega _\alpha}(n) = \frac{\alpha+1}{(n+\alpha+1)(n+\alpha+2)}.
$$
where $\Gamma (x)= \displaystyle \int _0^\infty t^{x-1}e^{-t}dt$ denotes the Gamma function.\\
Then 
$$
s_n(H_{\overline{z}})\sim \frac{\sqrt{\alpha+1}}{n+1}.
$$
For larger Bergman spaces we have the following result. 
\begin{prop}\label{beta}
Let  $\omega \in \cal {W}^*$ and let $\beta >0$. Suppose that  $\tau _\omega^2 (z) \asymp (1-|z|^2)^{2+\beta}\nu ^2(\log ( \frac{1}{1-|z|^2}))$, where $\nu$ is a monotone function which satisfies $\nu (2t) \asymp \nu (t)$. Then 
$$
s_n(H_{\bar {z}}) \asymp  \frac{1}{n^{\frac{1}{p}}\nu ^{\frac{1}{1+\beta}}(\log n )},\quad \text{where}\  p = \frac{2(1+\beta)}{2+\beta}.
$$
\end{prop}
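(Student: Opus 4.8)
The plan is to deduce Proposition \ref{beta} from Theorem \ref{equiv} applied to the symbol $\phi=z$, together with the computation of the decreasing rearrangement $\mathcal{R}^+_{z,\omega}$ already carried out in the Corollary preceding this statement. First I would check that $\phi=z$ is an admissible symbol, i.e.\ $\phi\in\mathcal{B}^\omega_0$: indeed $\tau_\omega(z)|\phi'(z)|=\tau_\omega(z)$, and since $\tau_\omega^2(z)\asymp(1-|z|^2)^{2+\beta}\nu^2(\log(1/(1-|z|^2)))$ with $\nu$ growing at most polynomially (a consequence of $\nu(2t)\asymp\nu(t)$), one has $\tau_\omega(z)\to 0$ as $|z|\to 1^-$. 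Writing $\rho(n)=n^{1/p}\nu^{1/(1+\beta)}(\log n)$, the target estimate is exactly $s_n(H_{\bar z})\asymp 1/\rho(n)$, so it suffices to produce the two ingredients required by Theorem \ref{equiv}.

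Next I would recall that the rearrangement was already evaluated above: under the present hypothesis on $\tau_\omega$ one has $\mathcal{R}_{z,\omega}(t)\asymp t^{-2(1+\beta)/(2+\beta)}\nu^{-2/(2+\beta)}(\log(1/t))$ and hence $\mathcal{R}^+_{z,\omega}(n)\asymp 1/\rho(n)$. It then remains to verify that $\rho$ meets the hypotheses of Theorem \ref{equiv}, namely that $\rho$ is increasing and that $\rho(x)/x^\gamma$ is decreasing for some $\gamma\in(0,1)$. Since $\beta>0$ forces $1/p=(2+\beta)/(2(1+\beta))\in(1/2,1)$, I would fix $\gamma\in(1/p,1)$ and write $\rho(x)/x^\gamma=x^{1/p-\gamma}\nu^{1/(1+\beta)}(\log x)$ with $1/p-\gamma<0$. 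The doubling condition $\nu(2t)\asymp\nu(t)$ bounds $\nu$ between two powers of its argument, so $\nu^{1/(1+\beta)}(\log x)$ is squeezed between two powers of $\log x$; the genuine negative power $x^{1/p-\gamma}$ therefore dominates and makes $\rho(x)/x^\gamma$ tend to $0$, and a similar comparison shows $\rho$ itself to be increasing.

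Finally I would invoke Theorem \ref{equiv} with this $\rho$ and $\phi=z$ to conclude $s_n(H_{\bar z})\asymp 1/\rho(n)=n^{-1/p}\nu^{-1/(1+\beta)}(\log n)$, which is the asserted estimate. The only point requiring care is that Theorem \ref{equiv} demands genuine, not merely eventual, monotonicity of $\rho$ and of $\rho(x)/x^\gamma$; I expect this to be the main (though minor) obstacle, and I would handle it by replacing $\rho$ with an equivalent function that is exactly increasing and for which $\rho(x)/x^\gamma$ is exactly decreasing---permissible because the conclusion is an $\asymp$ statement and the slowly varying factor $\nu^{1/(1+\beta)}(\log x)$ affects only bounded constants through the doubling property. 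I note that the direct route through the radial moments $m_\omega(n)$ of $H_{\bar z}$ is unavailable here, since $\omega$ is not assumed radial---only $\tau_\omega$ is equivalent to a radial function---so the soft argument via Theorem \ref{equiv} is the natural one.
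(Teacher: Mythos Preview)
Your approach is exactly the one taken in the paper: the authors simply note that the previous corollary gives $\mathcal{R}^+_{z,\omega}(n)\asymp n^{-1/p}\nu^{-1/(1+\beta)}(\log n)$ and then invoke Theorem \ref{equiv}. Your write-up is more explicit about verifying the monotonicity hypotheses on $\rho$ (and about why $\beta>0$ is needed so that $1/p<1$ and a choice $\gamma\in(1/p,1)$ exists), but the underlying argument is identical.
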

\begin{proof}
From the hypothesis, we have $\cR_{z, \omega} ^{+}(n)\asymp \frac{1}{n^{\frac{1}{p}}\nu ^{\frac{1}{1+\beta}}(\log n )}$.
By Theorem \ref{equiv}, we obtain the result.
\end{proof}
\section{Critical decay }\label{CD}
In this section we describe the class of functions $\phi \in {\cal{B}}_0^\omega$ such that $s_n(H_{\overline{\phi}}) = O(s_n(H_{\overline{z}})) $. This kind of problem was first examined by Arazy, Fisher and Peetre for standard Bergman spaces  \cite{AFP}. They proved that if $\phi$ belongs  to the Besov space $\mathcal{B}_1$ given by $$\mathcal{B}_1:= \lbrace f\in \text{Hol}(\mathbb{D}),\quad \int_\mathbb{D} |f''(z)| dA(z)<\infty\rbrace,$$ then 
\begin{equation*}\label{mean}
\sigma_n(H_{\overline{\phi}})= O\left( \log(n+1)\right), \ \text{where}\  \sigma_n(H_{\overline{\phi}}):= \displaystyle \sum _{j=1}^ns_j(H_{\overline{\phi}}).
\end{equation*}
 They also proved, in the same paper, that the converse is false. In \cite {ER}, M. Engli\v{s} and R. Rochberg gave a complete answer to this problem for the classical Bergman space. They proved, by using Boutet de Monvel-Guillemin theory, that $s_n(H_{\overline{\phi}})= O\left( 1/n \right) $ if and only if $\phi '\in H^1$. This result was extended by R. Tytgat \cite{Tyt,Tyt1} to standard Bergman spaces $A^2_\alpha$. In what follows we study this problem in more general situations. Our approach is 
based on Theorem \ref{main result 1}. 
\begin{prop}\label{NCHp}
Let $\omega \in \cal {W}^*$ be a weight such that $\tau ^2_\omega (z)\asymp  (1-|z|^2)^{2+\beta}\nu ^2(\log  (\frac{1}{1-|z|^2})$ where $\beta \geq 0$ and $\nu $ is a monotone function such that $\nu (2t) \asymp \nu (t)$. Let $\phi \in {\cal{B}}_0^\omega$ and let $p= \frac{2(1+\beta)}{2+\beta}$. Then $$s_n(H_{\overline{\phi}}) = O\left ( \frac{1}{n^{\frac{1}{p}}\nu ^\frac{1}{1+\beta}(\log n)} \right )\quad \Longrightarrow \quad \phi ' \in H^{p}.$$
\end{prop}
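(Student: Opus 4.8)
The plan is to reduce the assertion $\phi'\in H^p$ to a uniform bound on the integral means of $\phi'$ and to feed the decay hypothesis in through the lower estimate of Theorem \ref{main result 1}. Fix a lattice $(R_n)_n=(D(z_n,\delta\tau_\omega(z_n)))_n\in\cL_\omega$, write $b_n=\tau_\omega(z_n)|\phi'(z_n)|$, and recall that $\tau_\omega$ is comparable to the radial function $g(r)=(1-r^2)^{(2+\beta)/2}\nu(\log(1/(1-r^2)))$, with $\lambda_\omega(R_n)\asymp1$. Using subharmonicity of $|\phi'|^p$ and the sub-mean value inequality on the disks $R_n$, together with a Fubini/covering computation over the annulus $A_\rho=\{z:\ \bigl||z|-\rho\bigr|\lesssim\tau_\omega(\rho)\}$ (each point of which lies in $\asymp\tau_\omega(\rho)$ worth of the $R_n$), I would first establish, for every $\rho\in(1/2,1)$,
\[
\frac1{2\pi}\int_0^{2\pi}|\phi'(\rho e^{i\theta})|^p\,d\theta\ \lesssim\ \tau_\omega(\rho)^{1-p}\sum_{z_n\in A_\rho}b_n^{\,p}.
\]
Since $\phi'\in H^p$ is equivalent to $\sup_\rho\tfrac1{2\pi}\int_0^{2\pi}|\phi'(\rho e^{i\theta})|^p d\theta<\infty$, the proposition reduces to the uniform annular estimate $\sum_{z_n\in A_\rho}b_n^{\,p}\lesssim\tau_\omega(\rho)^{p-1}$.

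Next I would extract distributional information about $(b_n)$ from the singular values. By Lemma \ref{1} together with the inequality $\tau_\omega(z_n)|\phi'(z_n)|\lesssim\|\chi_{bR_n}H_{\overline{\phi}}k_{z_n}\|$ obtained in the proof of Theorem \ref{main result 1}, one has $\sum_n h(b_n)\lesssim\sum_n h(s_n(H_{\overline{\phi}}))$ for every increasing convex $h$ with $h(0)=0$. Using the hypothesis $s_n(H_{\overline{\phi}})=O(\cR^+_{z,\omega}(n))$ — the critical rate being identified with $\cR^+_{z,\omega}(n)\asymp n^{-1/p}\nu^{-1/(1+\beta)}(\log n)$ via Proposition \ref{beta} — and the comparisons $\sum_n h(C\cR^+_{z,\omega}(n))\asymp\int_\DD h(C\tau_\omega)\,d\lambda_\omega\asymp\sum_n h(C\tau_\omega(z_n))$, I obtain (up to a harmless finite correction from the finitely many exceptional indices) $\sum_n h(b_n)\lesssim\sum_n h(C\tau_\omega(z_n))$ for all such $h$. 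By Hardy--Littlewood--P\'olya majorization this controls partial sums of decreasing rearrangements, and the regular variation (doubling) of $\cR^+_{z,\omega}$ upgrades it to the pointwise bound $\cR^+_{\phi,\omega}(n)\lesssim\cR^+_{z,\omega}(n)$; that is, the $\lambda_\omega$-distribution of $\tau_\omega|\phi'|$ is dominated by that of $\tau_\omega$.

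The hard part is the final passage from this distributional control to the uniform annular bound of the first step. This step is genuinely nontrivial, and this is where I expect the main obstacle: when $\beta>0$ the distributional bound alone is \emph{insufficient}, since it still permits the means $M_p(\rho,\phi')=\bigl(\tfrac1{2\pi}\int_0^{2\pi}|\phi'(\rho e^{i\theta})|^p d\theta\bigr)^{1/p}$ to grow like $\tau_\omega(\rho)^{(1-p)/p}\to\infty$. The analytic input that must break this is the monotonicity of $M_p(\cdot,\phi')$: if the annular estimate failed along some $\rho_k\to1$, the means would stay large on all of $(\rho_k,1)$, and summing the resulting lower bounds for $\tau_\omega|\phi'|$ over every outer annulus would force $\cR^+_{\phi,\omega}(n)\gtrsim\Lambda_k\,\cR^+_{z,\omega}(n)$ with $\Lambda_k\to\infty$, contradicting the bound just obtained. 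I would carry this out quantitatively by estimating the non-tangential maximal function $N\phi'(\zeta)=\sup_{z\in\Gamma(\zeta)}|\phi'(z)|$ in $L^p(\TT)$ through a dyadic shadow/tent decomposition, where the overlaps of the shadows (which diverge if naively ignored, as the elementary case $\phi=z$ already shows) are controlled precisely by the monotonicity of the means. This overlap control, and the bookkeeping of the two competing scales, is the technical heart of the argument and is exactly where the critical exponent $p=\tfrac{2(1+\beta)}{2+\beta}$ is used.
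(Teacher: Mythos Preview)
Your outline has the right ingredients (the lower trace inequality, the monotonicity of the integral means $M_q(\rho,\phi')$), but the detour through Hardy--Littlewood--P\'olya and the decreasing rearrangement creates a real gap. Passing to $\cR^+_{\phi,\omega}$ throws away the angular structure, which is precisely what membership in $H^p$ records, and you then have to recover it in your ``hard part''. The contradiction you sketch there does not close: a large mean $M_p(\rho,\phi')$ only yields an \emph{upper} bound on the measure of super-level sets of $|\phi'|$ (Chebyshev), never a lower bound, so ``$M_p\ge\Lambda$ on $(\rho_k,1)$'' cannot by itself force $\cR^+_{\phi,\omega}(n)\gtrsim\Lambda\,\cR^+_{z,\omega}(n)$; the $L^p$ mass may concentrate on arcs of arbitrarily small angular measure. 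Moreover, when $\beta=0$ the HLP step itself fails to upgrade partial-sum majorization to a pointwise bound, since $\sum_{k\le n}\cR^+_{z,\omega}(k)$ is not comparable to $n\,\cR^+_{z,\omega}(n)$ at the borderline exponent $1/p=1$.

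The paper avoids the rearrangement entirely. With $p_\varepsilon=(1-\varepsilon)p+\varepsilon\in[1,p]$, the function $h(t)=h_\delta(t^{p_\varepsilon})=(t^{p_\varepsilon}-\delta)^+$ is increasing and convex, so Theorem~\ref{main result 1} and the hypothesis give directly
\[
\int_\DD h_\delta\bigl(\tau_\omega^{p_\varepsilon}(z)|\phi'(z)|^{p_\varepsilon}\bigr)\,d\lambda_\omega(z)\ \lesssim\ \int_\DD h_\delta\bigl(K^{p_\varepsilon}\tau_\omega^{p_\varepsilon}(z)\bigr)\,d\lambda_\omega(z).
\]
Now one applies Jensen in the angular variable \emph{before} any rearrangement: since $h_\delta$ is convex and $\tau_\omega$ is radial, the left side dominates $\int_0^1 h_\delta\bigl(\tau_\omega^{p_\varepsilon}(r)\,M_{p_\varepsilon}^{p_\varepsilon}(r,\phi')\bigr)\,\frac{dr}{\tau_\omega^2(r)}$. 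Using the monotonicity of $r\mapsto M_{p_\varepsilon}(r,\phi')$ to freeze the mean at a fixed $\rho$, and the divergence $\int^1\frac{dr}{\tau_\omega^{2-p_\varepsilon}(r)}=\infty$ (which holds precisely because $p_\varepsilon\le p$), one lets $\delta\to0^+$ and obtains $\|\phi'_\rho\|_{p_\varepsilon}\le CK$ uniformly in $\rho$; then $\varepsilon\to0$ gives $\phi'\in H^p$. The specific choice of $h$ is the missing idea in your proposal: it converts the trace inequality into a statement about integral means in one stroke, without ever passing through the distribution function.
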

\begin{proof}
Suppose that $s_n(H_{\overline{\phi}})  \leq  \frac{C}{n^{\frac{1}{p}}\nu ^\frac{1}{1+\beta}(\log n)} $. Since  $\tau ^2_\omega (z)\asymp  (1-|z|^2)^{2+\beta}\nu ^2(\log  (\frac{1}{1-|z|^2})$, we have 
$$
\cR_{z,\omega} (t) \asymp \frac{1}{t^p\nu ^{\frac{2}{2+\beta}}(\log (1/t))}\quad \text{and}\quad \cR_{z,\omega}^+(n) \asymp \frac{1}{n^{\frac{1}{p}}\nu ^{\frac{1}{1+\beta}}(\log (n))}.
$$
Then for every increasing function $h$, we have
$$
\displaystyle \sum _{n=1}^\infty h\left (\frac{1}{B_1n^{\frac{1}{p}}\nu ^{\frac{1}{1+\beta}}(\log (n))} \right )\leq \displaystyle \int _{\mathbb D}h(\tau _\omega(z) ) d\lambda _\omega (z) \leq \displaystyle \sum _{n=1}^\infty h\left (\frac{B_1}{n^{\frac{1}{p}}\nu ^{\frac{1}{1+\beta}}(\log (n))}\right ).
$$
where $B_1$ doesn't depend on $h$.  If in addition $h$ is convex and $h(0)=0$, then by Theorem\ref{main result 1} we obtain
\[
\begin{array}{lll}
\displaystyle \int _{\mathbb D}  h\left (\tau _\omega (z)|\phi ' (z)|\right )d\lambda _\omega(z) 
& \lesssim & \displaystyle \sum _{n=1}^\infty h \left ( B s_n(H_{\overline \phi})\right )\\
& \lesssim & \displaystyle \sum _{n=1}^\infty h \left ( BC \cR_{z ,\omega} ^{+}(n) \right )\\
 & \lesssim & \displaystyle \int _{\mathbb D}  h\left (B_1BC\tau _\omega (z) \right )d\lambda _\omega(z). \\

\end{array}
\]
Let $\varepsilon \in (0,1)$ and put $p_\varepsilon = (1-\varepsilon)p +\varepsilon$. Note that if $\beta = 0$ (i.e. $p =1$) then $p_\varepsilon =1$ and if $\beta >0$ then $1 < p_\varepsilon < p$. Note that in the two cases we have 
$$
\displaystyle \int ^1\frac{dr}{\tau _\omega ^{2-p_\varepsilon}(r) }= \infty.
$$
The last inequality, with $h(t) = h_\delta (t^{p_\varepsilon})$, becomes
$$
\displaystyle \int _{\mathbb D}  h_\delta\left (\tau _\omega ^{p_\varepsilon }(z)|\phi ' (z)|^{p_\varepsilon}\right )d\lambda _\omega(z)
\lesssim  \displaystyle \int _{\mathbb D}  h_\delta \left (K^{p_\varepsilon }\tau _\omega^{p_\varepsilon }(z) \right )d\lambda _\omega(z), \quad \text{where}\  K = CBB_1. $$
Using the convexity of $h_\delta$, we get   
$$
\displaystyle \int _0^1 h_\delta \left (\tau _\omega ^{p_\varepsilon }(r)\displaystyle \int _0^{2\pi}|\phi ' (re^{it})|^{p_\varepsilon}\frac{dt}{2\pi}\right )\frac{dr}{\tau _\omega ^{2}(r) } \leq \displaystyle \int _{\mathbb D}  h_\delta\left (\tau _\omega ^{p_\varepsilon }(z)|\phi ' (z)|^{p_\varepsilon}\right )d\lambda _\omega(z)  \lesssim  \displaystyle \int _{\mathbb D}  h_\delta\left (K^{p_\varepsilon }\tau _\omega^{p_\varepsilon }(z) \right )d\lambda _\omega(z).
$$
Suppose that there exists $\rho \in (0,1)$ such that $\| \phi _\rho '\| _{p_\varepsilon }\geq 2K$. We have 
$$
\displaystyle \int _\rho^1 h_\delta \left (\tau _\omega^{p_\varepsilon }(r)\| \phi _\rho '\| _{p_\varepsilon }^{p_\varepsilon }\right )\frac{dr}{\tau _\omega ^{2}(r) } \lesssim \displaystyle \int _{\mathbb D}  h_\delta\left (K^{p_\varepsilon }\tau _\omega ^{p_\varepsilon }(z) \right )d\lambda _\omega (z).$$
Now, using the fact that $h_\delta (t) \geq t/2$ if $t\geq 2\delta$, we get
$$
\| \phi _\rho '\| _{p_\varepsilon }^{p_\varepsilon }\displaystyle \int _{\{r\in (\rho ,1):\ \tau _\omega ^{p_\varepsilon }(r)\| \phi _\rho '\| _{p_\varepsilon }^{p_\varepsilon } \geq 2\delta\} }\frac{dr}{\tau _\omega ^{2-p_\varepsilon}(r) } \lesssim K^{p_\varepsilon }\displaystyle \int _{\{r\in (0 ,1):\ \tau _\omega^{p_\varepsilon }(r)K^{p_\varepsilon } \geq \delta\} }\frac{dr}{\tau _\omega^{2-p_\varepsilon}(r) }.
$$
Since $\| \phi _\rho '\| _{p_\varepsilon } \geq 2K$, $ \{r\in (\rho ,1):\ \tau _\omega ^{p_\varepsilon }(r)K^{p_\varepsilon } \geq \delta\} \subset \{r\in (\rho ,1):\ \tau _\omega ^{p_\varepsilon }(r)\| \phi _\rho '\| _{p_\varepsilon }^{p_\varepsilon } \geq 2\delta\}$. Then 
$$
\| \phi _\rho '\| _{p_\varepsilon }^{p_\varepsilon }\displaystyle \int _{\{r\in (\rho ,1):\ \tau _\omega ^{p_\varepsilon }(r)K^{p_\varepsilon } \geq \delta\} }\frac{dr}{\tau _\omega ^{2-p_\varepsilon}(r) } \lesssim K^{p_\varepsilon }\displaystyle \int _{\{r\in (0 ,1):\ \tau _\omega ^{p_\varepsilon }(r)K^{p_\varepsilon } \geq \delta\} }\frac{dr}{\tau _\omega  ^{2-p_\varepsilon}(r) }.
$$
Recall that $ \displaystyle \int ^1\frac{dr}{\tau _\omega ^{2-p_\varepsilon}(r) }= \infty$. We obtain, when $\delta $ goes to $ 0^+$, that $\| \phi _\rho '\| _{p_\varepsilon } \lesssim K$. When $\varepsilon $ goes to $0$, we get $\| \phi _\rho '\| _{p} \lesssim K$. This proves that $\phi ' \in H^p$.
\end{proof}

\begin{proof}[Proof of the first assertion of Theorem \ref {main result 2}.]
Let $p=p_\beta (:= \frac{2(1+\beta)}{2+\beta})$. Suppose that $s_n(H_{\overline {\phi}}) = O(1/n^{1/p} )$. By Proposition \ref{NCHp}, with $\nu =1$, we have $\phi '\in H^p$.\\ 
For the converse, by Theorem \ref{majoration}, it suffices to prove that $\cR _{\phi, \omega}^+ (x)= O(1/x^{1/p})$, whenever $\phi ' \in H^{p}$. To this end, let $U$ be the non-tangential maximal function of $\phi'$. Since $\phi ' \in H^p$ we have $U\in L^p$ and $\| U\| _{p}\lesssim \|\phi '\| _p$ (even if $p=1$).
We have 
\[ 
\begin{array}{lll}
\cR _{\phi, \omega}(t) &= & \lambda _\omega  \left ( \{z\in \DD : \tau_\omega (z) |\phi '(z)|\geq t\} \right )\\
\\
& \leq &  \lambda _\omega \left ( \left \{re^{i\theta} \in \DD : \tau_\omega (r) U(e^{i\theta})\geq t \right \} \right )\\
\\
& \asymp & \displaystyle \int _{ \{re^{i\theta} \in \DD : \tau_\omega (r) U(e^{i\theta})\geq t \} }\frac{drd\theta}{\pi(1-r^2)^{2+\beta}}\\
\\
&  \lesssim & \frac{1}{t^p} \displaystyle \int _0^{2\pi}U^p(e^{i\theta})\frac{d\theta}{2\pi}\\
\\
& \lesssim & \frac{1}{t^p}\| \phi '\| _p^p.
\end{array}
\]
This is equivalent to $\cR _{\phi, \omega}^+ (x)\lesssim \frac{\| \phi '\|_p}{x^{1/p}}$. The proof is complete.
\end{proof}
Now, we study the converse of Proposition \ref {NCHp}, when $\tau ^2_\omega (z)\asymp  (1-|z|^2)^{2+\beta}\nu ^2(\log  (\frac{1}{1-|z|^2}))$. In the following result we consider the case $\beta >0$. The case $\beta = 0$, will be discussed right after.
\begin{thm}\label{Hp}
Let $\omega \in \cal {W}^*$ be a weight such that $\tau ^2_\omega (z)\asymp  (1-|z|^2)^{2+\beta}\nu ^2(\log  (\frac{1}{1-|z|^2}))$, where $\beta > 0$ and $\nu $ is a monotone function such that $\nu (2t) \asymp \nu (t)$. Let $\phi \in {\cal{B}}_0^\omega$ and let $p= \frac{2(1+\beta)}{2+\beta}$. Then 
$$s_n(H_{\overline{\phi}}) = O\left ( \frac{1}{n^{\frac{1}{p}}\nu ^\frac{1}{1+\beta}(\log n)} \right )\quad \iff \quad \phi ' \in H^{p}.$$
In this case we have 
$$
s_n(H_{\overline{\phi}}) \lesssim \frac{\| \phi '\| _p}{n^{\frac{1}{p}}\nu ^\frac{1}{1+\beta}(\log n)},
$$
where the involved constant doesn't depend on $\phi$.
\end{thm}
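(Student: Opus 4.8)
The plan is to treat the equivalence as two separate implications. The implication $s_n(H_{\overline{\phi}})=O\bigl(n^{-1/p}\nu^{-1/(1+\beta)}(\log n)\bigr)\Rightarrow\phi'\in H^p$ is exactly Proposition \ref{NCHp}, so I only need the converse together with the quantitative bound. For this, observe that by Theorem \ref{majoration} applied with $\rho(x)=x^{1/p}\nu^{1/(1+\beta)}(\log x)$ (which satisfies the hypotheses, e.g. $\rho(x)/x$ is eventually decreasing since $1/p<1$ for $\beta>0$ and $\nu$ varies slowly), it suffices to prove the weak-type distribution estimate
\begin{equation*}
\mathcal{R}_{\phi,\omega}(t)=\lambda_\omega\{z:\tau_\omega(z)|\phi'(z)|>t\}\ \lesssim\ \frac{\|\phi'\|_{H^p}^{p}}{t^{p}}\,\nu^{-(2-p)}\!\bigl(\log(1/t)\bigr),
\end{equation*}
with a constant independent of $\phi$ (recall $2-p=2/(2+\beta)$). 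Passing to the rearrangement gives $\mathcal{R}^+_{\phi,\omega}(n)\lesssim\|\phi'\|_{H^p}\,n^{-1/p}\nu^{-1/(1+\beta)}(\log n)$, which Theorem \ref{majoration} converts into the claimed bound on $s_n$. For $\nu\equiv1$ this is precisely the computation in the proof of Theorem \ref{main result 2}: the non-tangential maximal function $U$ of $\phi'$ gives $\mathcal{R}_{\phi,\omega}(t)\le\int_0^{2\pi}\Psi(t/U(e^{i\theta}))\,d\theta$ with $\Psi(s)\asymp s^{-p}$, and $\int U^p\asymp\|\phi'\|_{H^p}^{p}$ closes the estimate.

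The difficulty for general $\nu$ is that now $\Psi(s)\asymp s^{-p}\nu^{-(2-p)}(\log(1/s))$ carries the slowly varying factor, and the naive integral $\int\Psi(t/U)\,d\theta$ diverges as soon as $\nu$ decreases, since it over-counts the points where $\phi'$ is large near the boundary. My plan is therefore to estimate $\mathcal{R}_{\phi,\omega}(t)$ \emph{scale by scale}. Writing $d\lambda_\omega\asymp\tau_\omega^{-2}(r)\,r\,dr\,d\theta$ with $\tau_\omega$ radial, and decomposing $\mathbb{D}$ into dyadic annuli $A_k=\{1-|z|\asymp 2^{-k}\}$, one has $\tau_\omega(r)\asymp 2^{-k(1+\beta/2)}\nu_k$ with $\nu_k=\nu(k\log 2)$ and threshold $\lambda_k=t/\tau_\omega(r)$. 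The measure of bad angles $N_t(r)=|\{\theta:|\phi'(re^{i\theta})|>\lambda_k\}|$ is then controlled, \emph{without} Chebyshev, by the honest distribution of the maximal function, $N_t(r)\le\min\bigl(2\pi,\,D_U(\lambda_k)\bigr)$ where $D_U(\lambda)=|\{U>\lambda\}|$, using $|\phi'(re^{i\theta})|\le U(e^{i\theta})$. The essential extra ingredient is the pointwise Hardy space bound $|\phi'(z)|\lesssim\|\phi'\|_{H^p}(1-|z|)^{-1/p}$: since $\beta>0$ forces $\tau_\omega(r)(1-r)^{-1/p}\to 0$, it gives $N_t(r)=0$ once $k>k_1$ with $k_1\asymp\log(1/t)$. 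This cutoff is exactly what fails for $\beta=0$, and is the reason Proposition \ref{CE} produces a counterexample there.

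It then remains to sum the annular contributions $\sum_k 2^{k(1+\beta)}\nu_k^{-2}\min(2\pi,D_U(\lambda_k))$. The crossover index $k_0$, where $D_U(\lambda_{k_0})\asymp 1$, also satisfies $k_0\asymp\log(1/t)$; hence \emph{every} contributing scale $k\in[k_0,k_1]$ has $1-|z|$ comparable to a fixed power of $t$, so $\log\frac{1}{1-|z|}\asymp\log(1/t)$, and the doubling hypothesis $\nu(2s)\asymp\nu(s)$ yields $\nu_k\asymp\nu(\log(1/t))$ uniformly over the whole sum. The scales $k\le k_0$ (where $N_t\asymp1$) give a geometric sum dominated by its top term $\asymp 2^{k_0(1+\beta)}\nu_{k_0}^{-2}\asymp t^{-p}\nu^{-(2-p)}(\log(1/t))\|\phi'\|_{H^p}^{p}$, the main term. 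For $k_0<k\le k_1$ one factors out $\nu_k^{p-2}\asymp\nu^{-(2-p)}(\log(1/t))$ and is left with $t^{-p}\sum_k\lambda_k^{p}D_U(\lambda_k)$; since $(\lambda_k)$ is geometric this sum is comparable to the logarithmic integral $\int_0^\infty\lambda^{p}D_U(\lambda)\,\frac{d\lambda}{\lambda}=\tfrac1p\|U\|_{L^p}^{p}\lesssim\|\phi'\|_{H^p}^{p}$, which is finite and carries no extra logarithm. Combining the two regimes gives the displayed weak-type bound.

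The crux, and the step I expect to be most delicate, is precisely the replacement of the crude weak-type bound $N_t(r)\le\|\phi'\|_{H^p}^{p}/\lambda_k^{p}$ by the honest distribution $D_U(\lambda_k)$: Chebyshev discards the decay of $\lambda_k^{p}D_U(\lambda_k)$ and, summed over the $\asymp\log(1/t)$ active scales, produces a spurious factor $\log(1/t)$ that ruins the sharp rate, whereas keeping $D_U$ and recognizing the geometric sum as $\tfrac1p\|U\|_{L^p}^{p}$ removes it. One must also verify that the pointwise growth cutoff together with the doubling step keeps all scales comparable (this is where $\beta>0$ is used in an essential way, and where the argument genuinely breaks for $\beta=0$), and that all constants are tracked linearly in $\|\phi'\|_{H^p}^{p}$, so that the resulting estimate $s_n(H_{\overline{\phi}})\lesssim\|\phi'\|_{H^p}\,n^{-1/p}\nu^{-1/(1+\beta)}(\log n)$ indeed has a constant independent of $\phi$.
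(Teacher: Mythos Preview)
Your approach is correct in outline and close in spirit to the paper's, though organized differently. Both proofs reduce to the same weak-type estimate $\mathcal{R}_{\phi,\omega}(t)\lesssim\|\phi'\|_{H^p}^{p}\,t^{-p}\nu^{-(2-p)}(\log(1/t))$ and then invoke Theorem~\ref{majoration}. The paper's execution differs in one key device: rather than working with the non-tangential maximal function $U$ of $\phi'$ and a dyadic sum over annuli, it uses the maximal function $U_t$ of the \emph{dilation} $z\mapsto\phi'(r_t z)$, where $r_t$ is the cutoff radius furnished by the pointwise Hardy bound. The gain is that $U_t$ inherits a uniform pointwise bound $U_t\le(1-r_t)^{-1/p}=t^{-2/\beta}$; splitting on $U_t\gtrless t^{1/2}$ then forces $\log(U_t/t)\in[\tfrac12\log(1/t),(1+2/\beta)\log(1/t)]$, so doubling applies directly to $\nu(\log(U_t/t))$ and no dyadic bookkeeping is needed. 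Your cutoff $k_1$ and the paper's $r_t$ encode the same information, but the dilation packages the two-sided control on the argument of $\nu$ more cleanly.

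Your dyadic version goes through, but the definition of the crossover $k_0$ via $D_U(\lambda_{k_0})\asymp 1$ is not quite right: if $U$ is concentrated (say $U=M\chi_E$ with $|E|=M^{-p}$, $M$ large) then $D_U(\lambda)\le M^{-p}$ for all $\lambda>0$ and no such $k_0$ exists, so your claim $k_0\asymp\log(1/t)$ is unjustified with this definition. The easy repair, after normalizing $\|\phi'\|_{H^p}=1$, is to define $k_0$ by $\lambda_{k_0}\asymp 1$ \emph{independently of $\phi$}; then $k_0\asymp\log(1/t)$ with constants depending only on $\beta,\nu$, the geometric sum for $k\le k_0$ is bounded by $2\pi\cdot 2^{k_0(1+\beta)}\nu_{k_0}^{-2}\asymp t^{-p}\nu^{-(2-p)}(\log(1/t))$, and for $k_0\le k\le k_1$ your doubling argument and the identification $\sum_k\lambda_k^p D_U(\lambda_k)\lesssim\|U\|_{L^p}^p$ work exactly as you describe. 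The homogeneous dependence on $\|\phi'\|_{H^p}$ is then recovered at the level of singular values via $s_n(H_{\overline{c\phi}})=|c|\,s_n(H_{\overline\phi})$, not at the level of $\mathcal{R}_{\phi,\omega}$ (where the $\nu$ factor obstructs a clean scaling).
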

\begin{proof}
By Proposition \ref{NCHp}, It remains to prove that if  $\phi ' \in H^p$ then  $s_n(H_{\overline{\phi}}) = O\left ( \frac{1}{n^{\frac{1}{p}}\nu ^\frac{1}{1+\beta}(\log n)} \right )$. We will proceed as in the proof of Theorem \ref {main result 2}. Without loss of generality, suppose that $\| \phi '\| _p =1$. Our goal is to show that $$\cR _{\phi, \omega }(t) = O\left (\frac{1}{t^p \nu ^{\frac{2}{2+\beta}}(\log (1/t))} \right ),$$
where the implied constant doesn't depend on $\phi $.\\
We have
\begin{equation}\label{rt}
|\phi' (r\zeta)| \leq \frac{1}{(1-r)^{\frac{1}{p}}}=\frac{1}{(1-r)^{\frac{2+\beta}{2(1+\beta)}}},\quad  r\in (0,1)\ \text{and}\ \zeta \in \mathbb {T}.
\end{equation}
This implies that 
$$
|\phi' (r\zeta)|\tau _\omega (r) \lesssim  (1-r)^{\frac{(2+\beta)\beta}{2(1+\beta)}}\nu (\log (1/1-r)).\\
$$
Then there exists $r_0 \in (0,1)$, which depends only on $\omega$, such that $$
|\phi' (r\zeta)|\tau _\omega (r) \leq  (1-r)^{\frac{(2+\beta)\beta}{4(1+\beta)}},\quad r \in (r_0, 1).
$$
So, if $|\phi' (r\zeta)|\tau _\omega (r) \geq t$ then  $r \leq r_t$ where $r_t $ is given by $(1-r_t)^{\frac{(2+\beta)\beta}{4(1+\beta)}}= t$.\\
Let $U_t$ be the non-tangential maximal function associated with $z\in \mathbb D \to  \phi' (r_tz)$. By inequality (\ref {rt}), we have 
\begin{equation}\label{rt2}
|\phi' (r\zeta)|\leq U_t(\zeta)\leq \frac{1}{(1-r_t)^{1/p}}  = \frac{1}{t^{2/\beta}},\quad r \leq r_t.
\end{equation}
For $t$ small enough, we have 
\begin{align*}
	\cR_{\phi,\omega}(t) &=  \lambda _\omega( \{ re^{i\theta} \in \mathbb{D} : \tau _\omega (r)|\phi ' (re^{i\theta})| \geq t \})\\
	&\lesssim \int_{ \{  re^{i\theta}: r\in (0,r_t),\ \tau _\omega (r)U_t(e^{i\theta}) \geq t \} }  \frac{dr}{\tau _\omega ^2(r)} d\theta\\
	&\lesssim  \displaystyle \int _{0}^{2\pi} \int_{ \{  r\in (0,r_t): \tau_\omega (r) \geq \frac{t}{U_t(e^{i\theta})} \} }  \frac{dr}{\tau _\omega ^2(r)} d\theta\\
	& \lesssim  \displaystyle \int _{0}^{2\pi}\cR_{z,\omega}\left (\frac{ t}{U_t (e^{i\theta})}\right )d\theta.
	\end{align*}
Since $\tau _\omega ^2 \asymp (1-r^2)^{2+\beta}\nu^2\left (\log (1/1-r^2)\right)$, we have $
\cR_{z,\omega}(t)\asymp \frac{1}{t^p\nu ^{\frac{2}{2+\beta}}(\log 1/t)}.
$
First, note that if $U_t(e^{i\theta}) \leq t^{1/2}$ then $\cR_{z,\omega}\left (\frac{ t}{U_t (e^{i\theta})}\right )\leq \cR_{z,\omega} (t^{1/2})\leq \cR_{z,\omega} (t)$.
Otherwise, from equation (\ref {rt2}), we get
$$
\cR_{z,\omega}\left (\frac{ t}{U_t (e^{i\theta})}\right ) \asymp  \frac{U_t^p(e^{i\theta})}{t^p\nu ^{\frac{2}{2+\beta}}(\log U_t(e^{i\theta})/t)}\lesssim   \frac{U_t^p(e^{i\theta})}{t^p\nu ^{\frac{2}{2+\beta}}(\log 1/t)}.
$$
Combining these inequalities we obtain
$$
\cR_{\phi,\omega}(t) \lesssim \frac{\| U_t \| _p^p}{t^p\nu ^{\frac{2}{2+\beta}}(\log 1/t)}\lesssim \frac{1}{t^p\nu ^{\frac{2}{2+\beta}}(\log 1/t)},
$$
where the involved constant doesn't depend on $ \phi' $. By Theorem \ref {majoration}, we obtain the desired result.
\end{proof}
Now, we will prove that the previous result is not true when $\beta = 0$ which is somewhat unexpected. 
\begin{prop}\label{CE}
Let $\omega \in \cW ^\ast$ such that $\tau _\omega (z) \asymp \frac{(1-|z|)}{\log ^\alpha (1/1-|z|)}$ with $\alpha >0$. Then for $\nu \in ]\frac{\alpha +1}{2\alpha +1},1[$, there exists $\phi \in {\mathcal B}^\omega _0$ such that $\phi ' \in H^1$ and 
$$s_n(H_{\overline{\phi}}) \asymp \frac{1}{n ^\nu}.$$
\end{prop}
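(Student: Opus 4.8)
The plan is to reduce the statement to a distribution-function computation via Theorem \ref{equiv}, and then to exhibit an explicit $\phi$ whose derivative sits just inside $H^1$ while its radial profile is as heavy as $H^1$ membership permits. Since $\beta=0$ here (so $p=1$) and $\tau_\omega^2(z)\asymp (1-|z|)^2/\log^{2\alpha}(1/(1-|z|))$, we have $d\lambda_\omega\asymp \log^{2\alpha}(1/(1-|z|))(1-|z|)^{-2}\,dA$. Given $\nu\in(\tfrac{\alpha+1}{2\alpha+1},1)$, I would set
$$\sigma:=(2\alpha+1)\nu-\alpha\in(1,\alpha+1),$$
so that $\alpha+\sigma=(2\alpha+1)\nu$ and $1/\nu=(2\alpha+1)/(\alpha+\sigma)$, and define $\phi$ by $\phi(0)=0$ and
$$\phi'(z)=\frac{1}{1-z}\,\log^{-\sigma}\!\Big(\frac{e}{1-z}\Big),\qquad z\in\DD,$$
with the principal branch (as $1-z$ lies in the right half-plane, $\log(e/(1-z))$ has real part $\asymp\log(e/|1-z|)\geq 1$, so $\phi'$ is analytic). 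Writing $\rho=|1-z|$, one has $|\phi'(z)|\asymp \rho^{-1}\log^{-\sigma}(e/\rho)$.

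The membership $\phi'\in H^1$ comes from $\sigma>1$: the nontangential maximal function of $\phi'$ is $\asymp |\theta|^{-1}\log^{-\sigma}(e/|\theta|)$ near $\theta=0$, and $\int_0\frac{d\theta}{\theta\log^\sigma(e/\theta)}<\infty$ exactly when $\sigma>1$ (equivalently $\sup_{0<r<1}\int_0^{2\pi}|\phi'(re^{i\theta})|\,d\theta<\infty$). That $\phi\in\mathcal{B}_0^\omega$ follows from $\tau_\omega(z)|\phi'(z)|\lesssim \log^{-\alpha}(1/(1-|z|))\to0$. The heart of the argument is the estimate $\cR_{\phi,\omega}(t)\asymp t^{-1/\nu}$. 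With $s=1-|z|$, $\theta$ the argument and $\rho\asymp s+|\theta|$, one has $\tau_\omega|\phi'|\asymp \frac{s}{\log^\alpha(1/s)}\cdot\frac{1}{\rho\log^\sigma(1/\rho)}$. I would split $\DD$ into the boundary layer $s\in(s_t,s'')$, where the level set $\{\tau_\omega|\phi'|>t\}$ is an interval $|\theta|<\Theta(s,t)\asymp \frac{s}{t\,\log^{\alpha+\sigma}(1/s)}$; the region $s\in(s'',s_1)$ (for a fixed $s_1\in(0,1)$), where this interval saturates the whole circle; and the fixed region $s>s_1$ of finite $\lambda_\omega$-mass. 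Here $s_t=\exp(-t^{-1/(\alpha+\sigma)})$ is the radius at which the peak value $\log^{-(\alpha+\sigma)}(1/s)$ drops below $t$, and $s''\asymp t\log^\alpha(1/t)$. Integrating $\frac{\log^{2\alpha}(1/s)}{s^2}\,\Theta(s,t)$ over the boundary layer gives $\frac1t\int^{u_t}u^{\alpha-\sigma}\,du\asymp t^{-1}u_t^{\alpha-\sigma+1}=t^{-(2\alpha+1)/(\alpha+\sigma)}$, with $u_t=\log(1/s_t)=t^{-1/(\alpha+\sigma)}$, while the saturated region contributes only the critical order $\log^\alpha(1/t)/t$ and the fixed region $O(1)$. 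Since $1/\nu>1$ the first term dominates, so $\cR_{\phi,\omega}(t)\asymp t^{-1/\nu}$, i.e. $\mathcal{R}^+_{\phi,\omega}(n)\asymp n^{-\nu}$. Applying Theorem \ref{equiv} with $\rho(x)=x^\nu$ (whose hypotheses hold since $\nu<\gamma<1$ for any $\gamma\in(\nu,1)$) then yields $s_n(H_{\overline{\phi}})\asymp n^{-\nu}$.

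The main obstacle is pinning down both bounds in $\cR_{\phi,\omega}(t)\asymp t^{-1/\nu}$. The lower bound is the easier half: restricting to the peak $|\theta|\lesssim s$ over $s\in(s_t,s_1)$ already gives $\int_{s_t}^{s_1}\frac{\log^{2\alpha}(1/s)}{s}\,ds\asymp u_t^{2\alpha+1}=t^{-1/\nu}$. The delicate point is the upper bound on the angular tail $|\theta|\gtrsim s$: one must solve $\Theta\log^\sigma(1/\Theta)\asymp \frac{s}{t\log^\alpha(1/s)}$, justify $\log(1/\Theta)\asymp\log(1/s)$ throughout the boundary layer, and check that the resulting contribution does not exceed the peak one. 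This is exactly where $\sigma<\alpha+1$ (i.e. $\nu<1$) enters: it makes $\alpha-\sigma>-1$, so $\int^{u_t}u^{\alpha-\sigma}\,du$ is governed by its inner endpoint $u_t$ and reproduces the power $t^{-1/\nu}$ rather than the critical rate. Conversely $\sigma>1$ (i.e. $\nu>\tfrac{\alpha+1}{2\alpha+1}$) is precisely what keeps $\phi'$ in $H^1$; thus the admissible window for $\nu$ in the statement is an exact reflection of the two constraints $1<\sigma<\alpha+1$.
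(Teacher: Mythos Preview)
Your proof is correct and follows essentially the same route as the paper: you pick the identical symbol $\phi'(z)=(1-z)^{-1}\log^{-\sigma}\!\big(e/(1-z)\big)$ (the paper writes $\gamma$ for your $\sigma$, determined by the same relation $\nu=(\alpha+\gamma)/(2\alpha+1)$), reduce via Theorem~\ref{equiv} to the distribution estimate $\cR_{\phi,\omega}(t)\asymp t^{-1/\nu}$, and verify $\phi'\in H^1$ from $\sigma>1$. The only cosmetic difference is in organizing the level-set computation: the paper splits $\cR_{\phi,\omega}=\cR_1+\cR_2$ according to whether $1-r>|\theta|$ or $1-r\le|\theta|$ and asserts each piece is $\asymp t^{-(2\alpha+1)/(\alpha+\gamma)}$, whereas you slice radially into a boundary layer, a saturated annulus, and a fixed core; both decompositions yield the same estimate.
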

\begin{proof}
Let $\gamma \in (1, \alpha +1)$ be such that $\nu = \frac{\alpha +\gamma}{2\alpha+1}$. Let $\phi$ be such that 
$$\phi ' (z) = \frac{1}{(1-z)\log ^\gamma (\frac{e}{1-z})},\quad  z\in \DD.$$
Since $\gamma >1,\  \phi ' \in H^1$. Write $\cR_{\phi,\omega}=\cR _1+\cR_2$, where 
$$
\cR_1(t) =  \lambda _\omega( \{ re^{i\theta} \in \mathbb{D} : \ 1-r>|\theta|\ \text{and }\ \tau _\omega (r)|\phi ' (re^{i\theta})| \geq t \})
$$
and 
$$
\cR_2(t) =  \lambda _\omega( \{ re^{i\theta} \in \mathbb{D} : \ 1-r\leq |\theta|\ \text{and }\ \tau _\omega (r)|\phi ' (re^{i\theta})| \geq t \}).
$$
Clearly, we have 
$$
\cR_1(t) \asymp  \lambda _\omega( \{ re^{i\theta} \in \mathbb{D} : \ 1-r>|\theta|\ \text{and }\  \frac{1}{\log ^{\alpha+\gamma}(e/(1-r))} \geq t \})\asymp \frac{1}{t ^{\frac{2\alpha+1}{\alpha +\gamma}}}.
$$
Similarly, 
$$\cR_2(t) \asymp  \lambda _\omega( \{ re^{i\theta} \in \mathbb{D} : \ 1-r\leq |\theta|\ \text{and }\  \frac{1-r}{\log ^\alpha (1/1-r)} \frac{1}{|\theta|\log ^{\gamma}(e/|\theta|)} \geq t \})\asymp \frac{1}{t ^{\frac{2\alpha+1}{\alpha +\gamma}}}.$$
Then, $\cR_{\phi,\omega}(t) \asymp  \frac{1}{t ^{\frac{2\alpha+1}{\alpha +\gamma}}}$. 
By Theorem \ref {equiv}, we obtain that $s_n(H_{\overline{\phi}}) \asymp \frac{1}{n ^{\nu}}$. The proof is complete.
\end{proof}
\section{Asymptotics}
Now, we will precise the results obtained in Section \ref{CD} when $s_n(H_{\overline{z}})$ is regular. The main result of this section is the following theorem.
\begin{thm}\label{asymptotics}
Let $\omega \in \cal {W}^*$ be a radial weight such that $\tau ^2_\omega (z)\asymp  (1-|z|^2)^{2+\beta}\nu ^2(\log  (\frac{1}{1-|z|^2}))$, where $\beta > 0$ and $\nu $ is a monotone function such that $\nu (2t) \asymp \nu (t)$.  Suppose that 
$$
s_n(H_{\overline{z}}) \sim \frac{\gamma}{n^{\frac{1}{p}}\nu ^\frac{1}{1+\beta}(\log n)},
$$
where $\gamma > 0$ and $p= \frac{2(1+\beta)}{2+\beta}$. Then, for $\phi ' \in H^{p}$, we have
$$s_n(H_{\overline{\phi}}) \sim  \frac{\gamma}{n^{\frac{1}{p}}\nu ^\frac{1}{1+\beta}(\log n)}\| \phi '\|_p.$$
\end{thm}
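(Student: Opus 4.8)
The plan is to upgrade the order-of-growth bound of Theorem \ref{Hp} to a sharp asymptotic by combining a boundary localization of $H_{\overline\phi}$ with the asymptotic orthogonality principle of Pushnitski recalled in the Appendix. Throughout write $a_n:=\frac{1}{n^{1/p}\nu^{1/(1+\beta)}(\log n)}$, so the claim reads $s_n(H_{\overline\phi})\sim\gamma\|\phi'\|_p\,a_n$ and the hypothesis reads $s_n(H_{\overline z})\sim\gamma a_n$. The guiding heuristic, already visible in the proof of Theorem \ref{Hp}, is that for $z=re^{i\theta}$ near the boundary one has $\tau_\omega(z)|\phi'(z)|\approx\tau_\omega(r)\,|\phi'(e^{i\theta})|$, so $H_{\overline\phi}$ behaves like a direct integral over $\theta$ of the radial model $H_{\overline z}$ weighted by $|\phi'(e^{i\theta})|$. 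The content of the theorem is that these angular contributions combine in the $\ell^p$ sense, which is exactly what an asymptotic orthogonality statement provides; note that Theorem \ref{main result 1}, being only a two-sided comparison with a constant $B$, can never produce the \emph{sharp} constant $\gamma$, so this machinery is genuinely needed.

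First I would record a \emph{stability/continuity} step reducing the problem to a dense, regular class of symbols. Since $\omega$ is radial, rotations are unitary on $A^2_\omega$ and conjugate $H_{\overline\phi}$ into the Hankel operator of a rotated symbol, so the functional being computed is rotation invariant. Using the Ky Fan inequality $s_{n+m}(H_{\overline\phi})\le s_n(H_{\overline\psi})+s_m(H_{\overline{\phi-\psi}})$ together with the bound $s_m(H_{\overline{\phi-\psi}})\lesssim\|\phi'-\psi'\|_p\,a_m$ from Theorem \ref{Hp} and the regular variation $a_{\lfloor\varepsilon n\rfloor}\sim\varepsilon^{-1/p}a_n$ (which follows from $s_n(H_{\overline z})\sim\gamma a_n$ and $\nu(2t)\asymp\nu(t)$), I would show that $\limsup_n s_n(H_{\overline\phi})/a_n$ and $\liminf_n s_n(H_{\overline\phi})/a_n$ depend continuously on $\phi'$ in the $H^p$ norm. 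Hence it suffices to prove $s_n(H_{\overline\phi})\sim\gamma\|\phi'\|_p a_n$ for $\phi'$ in a dense subclass of $H^p$, for instance $\phi'$ continuous and bounded on $\overline{\DD}$ (polynomials being dense in $H^p$ for $p\ge 1$).

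For such regular $\phi$, the core step is the localization. I would fix a large $N$, partition $\TT$ into arcs $I_1,\dots,I_N$ of length $2\pi/N$ with smooth angular cutoffs $\chi_1,\dots,\chi_N$ subordinate to them (each $\chi_j$ depending only on $\arg z$), and form the localized pieces $T_j=M_{\chi_j}H_{\overline\phi}M_{\chi_j}$. Because the $\chi_j$ depend only on the angle their derivatives are bounded, so the commutator/error terms in $H_{\overline\phi}-\sum_j T_j$ are of lower order; and because the relevant functions concentrate near the boundary inside the angular support of each $\chi_j$, the off-diagonal products $T_j^\ast T_k$ and $T_jT_k^\ast$ ($j\ne k$) have singular values that are $o(a_n^2)$, i.e.\ the family $(T_j)_j$ is \emph{asymptotically orthogonal}; here I would lean on the kernel decay built into $\mathcal W^\ast$ (conditions \eqref{C1} and \eqref{C4}) to quantify the near-orthogonality of functions localized to distinct arcs. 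On each arc, continuity of $\phi'$ gives $T_j\approx\phi'(e^{i\theta_j})\,\widetilde H_j$, where $\widetilde H_j$ is the angular localization of the model $H_{\overline z}$, and rotation invariance of $H_{\overline z}$ (applied to the equipartition of $\TT$, together with one application of the Pushnitski principle) yields $s_n(\widetilde H_j)\sim\gamma\,(1/N)^{1/p}a_n$.

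Finally I would assemble the pieces. Applying the Pushnitski asymptotic orthogonality result of the Appendix to $H_{\overline\phi}\approx\sum_{j=1}^N T_j$ gives
$$
\lim_n\frac{s_n\!\left(\sum_{j}T_j\right)}{a_n}=\gamma\Big(\sum_{j=1}^N\frac1N\,|\phi'(e^{i\theta_j})|^p\Big)^{1/p},
$$
a Riemann sum which converges, as $N\to\infty$, to $\gamma\big(\frac1{2\pi}\int_0^{2\pi}|\phi'(e^{i\theta})|^p\,d\theta\big)^{1/p}=\gamma\|\phi'\|_p$. The remaining point is to legitimately exchange the limits in $n$ and $N$: the errors $H_{\overline\phi}-\sum_jT_j$ and $T_j-\phi'(e^{i\theta_j})\widetilde H_j$ must be controlled uniformly enough that their contribution to $\limsup_n s_n(\cdot)/a_n$ tends to $0$ as $N\to\infty$, again via Ky Fan inequalities and Theorem \ref{Hp}. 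I expect the \textbf{main obstacle} to be precisely this asymptotic orthogonality bookkeeping: proving that the angular cross terms decay faster than $a_n^2$ and that the cutoff commutators are negligible, with estimates uniform in $N$, so that the $n\to\infty$ asymptotic survives the passage $N\to\infty$. The density reduction of the first paragraph then removes the continuity/boundedness assumption and delivers the theorem for all $\phi'\in H^p$.
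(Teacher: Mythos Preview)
Your overall architecture --- density reduction via Theorem \ref{Hp}, angular localization, and the Pushnitski asymptotic orthogonality principle --- is the same as the paper's. The implementation, however, differs at the key point, and the paper's version bypasses precisely the obstacle you flag as unresolved.

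Instead of localizing $H_{\overline\phi}$ directly by two-sided angular cutoffs $M_{\chi_j}H_{\overline\phi}M_{\chi_j}$, the paper first uses, for $\phi$ analytic in a neighborhood of $\overline\DD$, the Taylor identity $\phi(z)-\phi(w)=(z-w)\phi'(w)+(z-w)^2\Phi(z,w)$ to write
\[
H_{\overline\phi}P_\omega=H_{\overline z}P_\omega M_{\overline{\phi'}}+A_\Phi P_\omega,
\]
and shows by a direct Toeplitz comparison (Lemma \ref{s_n(APhi)}) that $s_n(A_\Phi)=O(n^{-2/p+\varepsilon})$, hence $A_\Phi\in\Sigma^0_\psi$. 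This reduces everything to computing $D_\psi$ and $d_\psi$ of $H_{\overline z}P_\omega M_g$ for a continuous $g$ on $\overline\DD$. The localization is then carried out on the \emph{right only}, with sharp characteristic functions $\chi_{R_N^k}$ of sectors (Proposition \ref{prop2} and Lemma \ref{lem1}): writing $g_N=\sum_k g(\xi_k)\chi_{R_N^k}$, one has $D_\psi(H_{\overline z}P_\omega M_{g-g_N})\le\|g-g_N\|_\infty^p D_\psi(H_{\overline z})\to0$, and for the pieces $T_k=g(\xi_k)H_{\overline z}P_\omega M_{\chi_{R_N^k}}$ the products $T_iT_j^\ast$ vanish identically (disjoint supports), while $T_i^\ast T_j\in\Sigma^0_{\psi\circ\sqrt{\cdot}}$ is obtained by a shrinking-arc argument. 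Rotation invariance gives $D_\psi(H_{\overline z}P_\omega M_{\chi_{R_N^k}})=\tfrac1N D_\psi(H_{\overline z})$, and the Riemann sum follows.

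What this buys over your scheme: there are no commutators $[P_\omega,M_{\chi_j}]$ to estimate (these are big-Hankel type objects not obviously of lower order), no adjacent-arc cross terms from a smooth partition of unity, and no uniform-in-$N$ control of two-sided errors; the ``freezing'' of $\phi'$ is done once globally via the Taylor remainder rather than arc by arc. Your density step and final assembly are the same as the paper's; the substantive difference is the one-sided localization of the model operator $H_{\overline z}P_\omega M_g$ in place of your two-sided localization of $H_{\overline\phi}$, and it is exactly this that removes your acknowledged main obstacle.
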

To prove this result, we will introduce the following functionals (see \cite{BS, Pus}).
Let $ T$ be a compact operator between two Hilbert spaces. Let $n(s,T)$ be the singular values counting function given by
$$
n(s,T)=\# \{ n: \ s_n(T) \geq s\} ,\quad s>0.
$$
The class of  strictly increasing continuous functions $\psi:(0,+\infty)\to (0,+\infty)$  such that $\psi(0)=0$ and such that
$$
\psi(\alpha t)\sim \alpha^p \psi(t),\quad(t\to 0^+),
$$
for some $p> 0$, will be denoted by $\cC _p$. Let
$$
D _\psi (T):=\displaystyle\limsup_{s\to 0^+}\psi(s)n(s,T)\quad\mbox{and}\quad d _\psi (T):=\displaystyle\liminf_{s\to 0^+}\psi(s)n(s,T).
$$
From these definitions it is easy to see that if $\psi \in \cC _p$, then 

$$d_\psi (T) = D_\psi (T) \in (0,\infty) \Longrightarrow s_n(T) \sim D_\psi (T)^{1/p}\psi ^{-1}(1/n).$$
For more definitions and  properties of these functionals see Section \ref{Annexe}.\\

First, we need some results on the operator $H_{\overline{z}}P_\omega M_g$, where $M_g$ denotes the multiplication operator defined on $L^2 (dA_\omega )$.\\

\noindent For an arc $\delta=\{e^{i\theta}:\quad \theta_1\leq \theta<\theta_2\}\subset \TT$ , $R_N^\delta$ will denote
$$
R_N^\delta=\{z=re^{i\theta}:\quad 0<1-r \le 2\pi/N,\;    \theta_1\leq \theta<\theta_2\},\quad N=1,2,...
$$
The proof  of the following results is similar to that one given by A. Pushnitski in \cite {Pus}.\\
\begin{lem}\label{lem1} Let $\omega \in \cal {W}^*$ be a radial weight. Let $\psi \in {\cal C}_p$ such that $D_{\psi}(H_{\overline{z}})$ is finite. The following are true
\begin{enumerate}
\item Let $g$ be a bounded function on $\mathbb{D}$. We have
	$$
	D_{\psi}(H_{\overline{z}}P_\omega M_g)\leq \|g\|_{\infty}^p D_{\psi}(H_{\overline{z}}).
	$$
	\item Let $\delta_1, \delta _2$ be two arcs of $\TT$ such that $\overline{\delta_1}\cap\overline{ \delta _2} = \emptyset$. Then 
$$
	\left( H_{\overline{z}}P_\omega M_{\chi_{R_N^{\delta _1}}}\right)^\ast\left( H_{\overline{z}}P_\omega M_{\chi_{R_N^{\delta _2}}}\right)\in \displaystyle \cap _{p>0} \cal{S}_p.
$$

\item Let $\delta\subset\partial\mathbb{D}$ be an arc such that $|\delta|<2\pi$. Then
	$$
	D_{\psi}(H_{\overline{z}}P_\omega {\chi_{R_N^\delta}})\leq |\delta| D_{\psi}(H_{\overline{z}}).
	$$ 
	\item Let $\delta_1, \delta _2$ be two arcs of $\TT$ such that $\overline{\delta_1}\cap\overline{ \delta _2}$ is reduced to one point. Then 
$$
	\left( H_{\overline{z}}P_\omega M_{\chi_{R_N^{\delta _1}}}\right)^\ast\left( H_{\overline{z}}P_\omega M_{\chi_{R_N^{\delta _2}}}\right)\in \displaystyle \Sigma_{\psi \circ \sqrt {}}^0 .
$$

	\end{enumerate}
\end{lem}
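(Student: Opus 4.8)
The plan is to deduce all four assertions from the elementary properties of the functionals $D_\psi,d_\psi$ and of the class $\Sigma_{\psi\circ\sqrt{}}^0$ recorded in Section~\ref{Annexe}: the regular variation $\psi(\alpha t)\sim\alpha^p\psi(t)$, the stability of $D_\psi$ under perturbations lying in $\Sigma_{\psi\circ\sqrt{}}^0$, the quasi-triangle inequality for $D_\psi^{1/p}$, the additivity of $D_\psi$ along asymptotically orthogonal families, and the inclusion $\bigcap_{p>0}\cS_p\subset\Sigma_{\psi\circ\sqrt{}}^0$. For~(1), since $P_\omega$ is an orthogonal projection we have $\|P_\omega M_g\|\le\|g\|_\infty$, so the singular value inequality $s_n(H_{\overline z}P_\omega M_g)\le\|P_\omega M_g\|\,s_n(H_{\overline z})\le\|g\|_\infty\,s_n(H_{\overline z})$ yields $n(s,H_{\overline z}P_\omega M_g)\le n(s/\|g\|_\infty,H_{\overline z})$. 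Multiplying by $\psi(s)$ and using $\psi(s)\sim\|g\|_\infty^p\,\psi(s/\|g\|_\infty)$ as $s\to0^+$, the $\limsup$ gives exactly $D_\psi(H_{\overline z}P_\omega M_g)\le\|g\|_\infty^p D_\psi(H_{\overline z})$.

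For~(2) I would first rewrite the product as $M_{\chi_{R_N^{\delta_1}}}P_\omega H_{\overline z}^\ast H_{\overline z}P_\omega M_{\chi_{R_N^{\delta_2}}}$ and read off its integral kernel (with respect to $\omega\,dA$) from~\eqref{hankel-kernel}; this kernel is built from the Bergman kernel $K(\cdot,\cdot)$ and the factor $\overline z$. The key geometric input is that $\overline{\delta_1}\cap\overline{\delta_2}=\emptyset$ forces $\dist(R_N^{\delta_1},R_N^{\delta_2})\ge c>0$, so the kernel is supported uniformly off the diagonal. Combined with the off-diagonal decay of $K$ available for weights in $\cW$ (a consequence of~\eqref{C1} and~\eqref{C4}), this shows that the kernel, weighted by $\omega^{1/2}\otimes\omega^{1/2}$, decays faster than any power of $\|K_z\|\,\|K_w\|$; hence every Schatten norm of the operator is finite and it lies in $\bigcap_{p>0}\cS_p$.

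For~(4), let $\zeta_0$ be the common endpoint and $V_\varepsilon$ a shrinking neighbourhood of $\zeta_0$ on $\TT$. Splitting $\chi_{R_N^{\delta_i}}=\chi_{R_N^{\delta_i}\cap V_\varepsilon}+\chi_{R_N^{\delta_i}\setminus V_\varepsilon}$, every cross term with at least one factor off $V_\varepsilon$ has separated supports and hence lies in $\bigcap_{p>0}\cS_p\subset\Sigma_{\psi\circ\sqrt{}}^0$ by~(2). The remaining term $M_{\chi_{R_N^{\delta_1}\cap V_\varepsilon}}P_\omega H_{\overline z}^\ast H_{\overline z}P_\omega M_{\chi_{R_N^{\delta_2}\cap V_\varepsilon}}$ is localized to an arc of length $O(\varepsilon)$; decomposing $V_\varepsilon$ dyadically by distance to $\zeta_0$ (so that at scale $2^{-k}$ the two pieces are separated by $\asymp 2^{-k}$) and using the product inequality $s_{m+n-1}(ST)\le s_m(S)s_n(T)$ together with the scaling of~(1), one bounds its $D_{\psi\circ\sqrt{}}$ by a quantity tending to $0$ as $\varepsilon\to0$. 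Since $\Sigma_{\psi\circ\sqrt{}}^0$ absorbs such limits, the whole product lies in $\Sigma_{\psi\circ\sqrt{}}^0$.

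Finally, for~(3) I would exploit radiality: with $U_\theta f(z)=f(e^{-i\theta}z)$ one has $U_\theta P_\omega U_\theta^\ast=P_\omega$, $U_\theta H_{\overline z}U_\theta^\ast=e^{i\theta}H_{\overline z}$ and $U_\theta M_{\chi_{R_N^\delta}}U_\theta^\ast=M_{\chi_{R_N^{\delta+\theta}}}$, so $D_\psi(H_{\overline z}P_\omega M_{\chi_{R_N^\delta}})$ depends only on $|\delta|$. Removing the part of the symbol supported on $\{1-r>2\pi/N\}$ is smoothing by the argument of~(2) (it is bounded away from $\TT$), whence $D_\psi(H_{\overline z}P_\omega M_{\chi_{R_N^\TT}})=D_\psi(H_{\overline z})$. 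Partitioning $\TT$ into $J$ congruent arcs $\delta_1,\dots,\delta_J$ and using that, by~(2) and~(4), all cross products $(H_{\overline z}P_\omega M_{\chi_{R_N^{\delta_i}}})^\ast(H_{\overline z}P_\omega M_{\chi_{R_N^{\delta_j}}})$ with $i\ne j$ lie in $\Sigma_{\psi\circ\sqrt{}}^0$, the additivity lemma of Section~\ref{Annexe} gives $D_\psi(H_{\overline z})=\sum_{j=1}^J D_\psi(H_{\overline z}P_\omega M_{\chi_{R_N^{\delta_j}}})=J\,D_\psi(H_{\overline z}P_\omega M_{\chi_{R_N^\delta}})$ with $|\delta|=2\pi/J$; thus $D_\psi(H_{\overline z}P_\omega M_{\chi_{R_N^\delta}})=\tfrac{|\delta|}{2\pi}D_\psi(H_{\overline z})\le|\delta|D_\psi(H_{\overline z})$, a general $\delta$ being handled by enclosing it in finite unions of congruent arcs and letting the mesh go to $0$. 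The main obstacle is twofold: establishing the off-diagonal kernel decay of~(2) uniformly for all weights in $\cW^\ast$, and the delicate limiting bookkeeping in~(4) needed to upgrade the a priori bound $s_n\lesssim n^{-2/p}$ to the genuine $o$-statement encoded in membership in $\Sigma_{\psi\circ\sqrt{}}^0$.
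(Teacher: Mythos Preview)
Your arguments for (1) and (2) are fine and match the paper (the paper simply cites Pushnitski for (2)). The difficulty is in the logical order of (3) and (4).

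You prove (3) by partitioning $\TT$ into $J$ congruent arcs that \emph{touch} and invoking (4) for the adjacent pairs; then you try to prove (4) independently by a dyadic decomposition ``together with the scaling of (1).'' But (1) gives $D_\psi(H_{\overline z}P_\omega M_{\chi_I})\le \|\chi_I\|_\infty^{\,p}D_\psi(H_{\overline z})=D_\psi(H_{\overline z})$ for any arc $I$, with no smallness in $|I|$. The product inequality $D_{\psi\circ\sqrt{}}(S^\ast T)\le 2\,D_\psi(S)\,D_\psi(T)$ applied to the piece localized near the common endpoint therefore only yields a bound $\lesssim D_\psi(H_{\overline z})^2$, not $o(1)$. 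Your dyadic refinement does not help: truncating at scale $2^{-K}$ puts the coarse part in $\bigcap_p\cS_p$ by (2), but the tail arc (of length $\asymp 2^{-K}$) still carries $D_\psi\le D_\psi(H_{\overline z})$ by (1), so nothing tends to $0$. In short, your proof of (4) tacitly needs the very estimate (3) it is supposed to precede, and the scheme is circular.

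The paper breaks the circle by proving (3) from (1) and (2) alone. Given $|\delta|<2\pi$, choose $N$ with $\tfrac{2\pi}{N+1}\le|\delta|<\tfrac{2\pi}{N}$ and set $\delta_n=e^{2\pi i n/N}\delta$. Because $|\delta|<2\pi/N$, the rotates $\overline{\delta_n}$ are pairwise \emph{disjoint with gaps}, so the closures of the $R_N^{\delta_n}$ are disjoint and (2) (not (4)) handles all cross terms. Also $g:=\sum_{n=1}^N\chi_{R_N^{\delta_n}}\le 1$, so (1) gives $D_\psi(H_{\overline z})\ge D_\psi(H_{\overline z}P_\omega M_g)$; asymptotic orthogonality and rotational unitary equivalence then yield $D_\psi(H_{\overline z})\ge N\,D_\psi(H_{\overline z}P_\omega M_{\chi_{R_N^\delta}})$, whence $D_\psi(H_{\overline z}P_\omega M_{\chi_{R_N^\delta}})\le \tfrac1N D_\psi(H_{\overline z})\le |\delta|D_\psi(H_{\overline z})$. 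With (3) in hand, (4) follows exactly by your $\varepsilon$-splitting: shrink $\delta_1$ to $\delta_1(\varepsilon)$ separated from $\delta_2$, apply (2) and Corollary~\ref{cor1}, and bound the remaining product via $D_{\psi\circ\sqrt{}}(\cdot)\le 2\,D_\psi(H_{\overline z}P_\omega M_{\chi_{R_N^{\delta_1\setminus\delta_1(\varepsilon)}}})\,D_\psi(H_{\overline z}P_\omega M_{\chi_{R_N^{\delta_2}}})\lesssim \varepsilon\,D_\psi(H_{\overline z})^2\to 0$, now using (3). Your partition-into-touching-arcs argument for (3) is in fact the content of Proposition~\ref{prop2}, which upgrades the inequality to an equality \emph{after} (4) is available.
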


\begin{proof}
	Since $g$ is bounded, $M_g$ is bounded on $L^2_\omega$ and $\| M_g\| = \|g\|_{\infty}$. Then $n(s, H_{\overline{z}}P_\omega M_g)\leq n(s, \|M_g\| H_{\overline{z}})= n(s, \|g\| _\infty H_{\overline{z}})$, for all $s>0$.
	Hence, by Proposition \ref{prop ABC}, we get
$$
		D_{\psi}(H_{\overline{z}}P_\omega M_g) \leq D_{\psi}(\|g\|_\infty H_{\overline{z}})
		 = \|g\|_{\infty}^p D_{\psi}(H_{\overline{z}}).
$$
This proves the first assertion. \\
The same proof as that proposed by Pushnitski in \cite{Pus}, gives the second assertion.\\
	To prove the third assertion, let $N\in\mathbb{N}^\ast$ such that $\frac{2\pi}{N+1}\le |\delta|<\frac{2\pi}{N}$ and let 
	$$
	\delta_n:=e^{2\pi in/N}\delta,\quad n=1,2...,N.
	$$ 
	Let $R_N^n:=R_N^{\delta_n}$. Since $g:=\displaystyle\sum_{n=1}^{N}\chi_{R_N^n}\leq 1$, we have
	\begin{align*}
		D_{\psi}(H_{\overline{z}})\geq D_{\psi}(H_{\overline{z}}P_\omega M_g)
		= D_{\psi}\left( \displaystyle\sum_{n=1}^{N}H_{\overline{z}}P_\omega M_{\chi_{R_N^n}}\right). 
			\end{align*}
	For $i\neq j$ we have $\left( H_{\overline{z}}P_\omega M_{\chi_{R_N^i}}\right)\left( H_{\overline{z}}P_\omega M_{\chi_{R_N^j}}\right)^\ast=0 $. Since the closures of $R_N^i$ and $R_N^j$ are disjoint, we have
	$$
	\left( H_{\overline{z}}P_\omega M_{\chi_{R_N^i}}\right)^\ast\left( H_{\overline{z}}P_\omega M_{\chi_{R_N^j}}\right)\in \Sigma^0_{\psi\circ\sqrt{.}}.
	$$
	Further, since $\omega$ is radial, the operators $H_{\overline{z}}P_\omega M_{\chi_{R_N^j}}$ are unitarily equivalent. It follows from Theorem \ref{orthogonal} that
	\begin{align*}
	D_{\psi}(H_{\overline{z}})
	\geq D_{\psi}\left( \displaystyle\sum_{n=1}^{N}H_{\overline{z}}P_\omega M_{\chi_{R_N^n}}\right) =N D_{\psi}(H_{\overline{z}}P_\omega M_{\chi_{R_N^\delta}}).
	\end{align*}
	Therefore, $D_{\psi}(H_{\overline{z}}P_\omega M_{\chi_{R_N^\delta}}) \leq 1/N D_{\psi}(H_{\overline{z}})\leq |\delta| D_{\psi}(H_{\overline{z}})$.\\
	The last assertion is a consequence of the second and the third assertion. Indeed, let $\varepsilon >0$ small enough. Suppose that $\delta _1 = \{ e^{i\theta }:\ \theta _1\leq \theta < \theta _2\}$ and $\delta _2 = \{ e^{i\theta }:\ \theta _2\leq \theta < \theta _3\}$ and let $\delta _1(\varepsilon)= \{ e^{i\theta }:\ \theta _1\leq \theta < \theta _2-\varepsilon \}$. By $(2)$, 
$
	\left( H_{\overline{z}}P_\omega M_{\chi_{R_N^{\delta _1(\varepsilon)}}}\right)^\ast\left( H_{\overline{z}}P_\omega M_{\chi_{R_N^{\delta _2}}}\right)\in \displaystyle \cap _{p>0} \cal{S}_p
$. By Corollary \ref{cor1}, we get
$$
D_{\psi \circ \sqrt {.}}\left (\left( H_{\overline{z}}P_\omega M_{\chi_{R_N^{\delta _1} }}\right )^\ast\left( H_{\overline{z}}P_\omega M_{\chi_{R_N^{\delta _2}}}\right ) \right )= D_{\psi\circ \sqrt {.}}\left (\left( H_{\overline{z}}P_\omega M_{\chi_{R_N^{\delta _1}\setminus R_N^{\delta _1(\varepsilon)}}}\right)^\ast\left( H_{\overline{z}}P_\omega M_{\chi_{R_N^{\delta _2}}}\right )\right ).
$$
Applying  Proposition \ref{prop ABC}, we have
\[
\begin{array}{lll}
D_{\psi\circ \sqrt {.}}\left (\left( H_{\overline{z}}P_\omega M_{\chi_{R_N^{\delta _1}\setminus R_N^{\delta _1(\varepsilon)}}}\right)^\ast\left( H_{\overline{z}}P_\omega M_{\chi_{R_N^{\delta _2}}}\right )\right )&\leq & 2 D_{\psi }( H_{\overline{z}}P_\omega M_{\chi_{R_N^{\delta _1}\setminus R_N^{\delta _1(\varepsilon)}}}) D_{\psi }( H_{\overline{z}}P_\omega M_{\chi_{R_N^{\delta _2}}})\\
& \lesssim & \varepsilon D_{\psi }^2( H_{\overline{z}}).\\
\end{array}
\]
Letting $\varepsilon $ to $0$, we obtain $D_{\psi \circ \sqrt {.}}\left (\left( H_{\overline{z}}P_\omega M_{\chi_{R_N^{\delta _1} }}\right )^\ast\left( H_{\overline{z}}P_\omega M_{\chi_{R_N^{\delta _2}}}\right ) \right )=0$, as required.
	\end{proof}

\begin{prop}\label{prop2}
	Let $\omega \in \cW ^\ast$ be a radial weight. Let $\psi \in \cC_p$ such that $0< d_{\psi}(H_{\overline{z}})\leq  D_{\psi}(H_{\overline{z}})<\infty$. 
	 \begin{itemize}
	 \item  Let $\delta\subset\partial\mathbb{D}$ be an arc such that $|\delta|=\frac{2\pi}{N}$, where $N\in \NN ^\ast$. Then
	$$
D_{\psi}(H_{\overline{z}}P_\omega M_{\chi_{R_N^\delta}})=\frac1N D_{\psi}(H_{\overline{z}}) \quad\mbox{and}\quad d_{\psi}(H_{\overline{z}}P_\omega M_{\chi_{R_N^\delta}})=\frac1N d_{\psi}(H_{\overline{z}}) .
	$$ 
	\item Let $g$ be a continuous function on $\overline{\DD}$. We have 
		$$
\| g \| _p^p d_\psi (H_{\overline{z}}) \leq d_\psi (H_{\overline{z}}P_\omega M_{g})\leq D_\psi (H_{\overline{z}}P_\omega M_{g})\leq \| g \| _p^p D_\psi (H_{\overline{z}}).
	$$ 
\end{itemize}
\end{prop}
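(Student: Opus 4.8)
We write $\|g\|_p$ for the boundary $L^p$-norm, $\|g\|_p^p=\frac{1}{2\pi}\int_0^{2\pi}|g(e^{i\theta})|^p\,d\theta$, and set $S^{(N)}:=H_{\overline z}P_\omega M_{\chi_{R_N^\delta}}$ for an arc $\delta$ of length $2\pi/N$. The middle inequality $d_\psi\le D_\psi$ in the second bullet is immediate, so the content is the two outer bounds together with the first bullet.

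For the first assertion the plan is to tile the outer annulus $A_N:=\{z:0<1-|z|\le 2\pi/N\}$ by the $N$ congruent pieces $R_N^n:=R_N^{\delta_n}$, $\delta_n:=e^{2\pi in/N}\delta$, so that $\sum_{n=1}^N\chi_{R_N^n}=\chi_{A_N}$ and hence $\sum_{n=1}^N T_n=H_{\overline z}P_\omega M_{\chi_{A_N}}$, where $T_n:=H_{\overline z}P_\omega M_{\chi_{R_N^n}}$. These $T_n$ are pairwise asymptotically orthogonal: $T_iT_j^*=0$ for $i\ne j$ because $R_N^i\cap R_N^j$ is Lebesgue-null, while $T_i^*T_j\in\Sigma^0_{\psi\circ\sqrt{\cdot}}$ by Lemma~\ref{lem1}(2) when the closures are disjoint and by Lemma~\ref{lem1}(4) when they meet at one point. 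Since $\omega$ is radial the rotations $z\mapsto e^{2\pi in/N}z$ make all $T_n$ unitarily equivalent, so the orthogonal sum satisfies $n(s,\bigoplus_nT_n)=N\,n(s,T_1)$, giving $D_\psi(\bigoplus_nT_n)=ND_\psi(T_1)$ and $d_\psi(\bigoplus_nT_n)=Nd_\psi(T_1)$. By Theorem~\ref{orthogonal} the sum $\sum_nT_n$ has the same functionals as this orthogonal sum. Finally the complementary operator $H_{\overline z}P_\omega M_{\chi_{\DD\setminus A_N}}$ has symbol supported on the compact set $\{|z|\le 1-2\pi/N\}$; its kernel extends analytically in $z$ past $\overline\DD$, so its singular values decay geometrically and it lies in $\bigcap_{q>0}\cS_q\subset\Sigma^0_{\psi\circ\sqrt{\cdot}}$. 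As adding a $\Sigma^0$ term changes neither $D_\psi$ nor $d_\psi$ (Proposition~\ref{prop ABC}), and $H_{\overline z}P_\omega$ has the same singular values as $H_{\overline z}$, we get $D_\psi(\sum_nT_n)=D_\psi(H_{\overline z})$ and likewise for $d_\psi$, whence $D_\psi(S^{(N)})=\frac1ND_\psi(H_{\overline z})$ and $d_\psi(S^{(N)})=\frac1Nd_\psi(H_{\overline z})$.

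For the second bullet I would first approximate $M_g$ by the step symbol $g_N:=\sum_{n=1}^Ng(\zeta_n)\chi_{R_N^n}$ with $\zeta_n\in\delta_n$, so that $H_{\overline z}P_\omega M_{g_N}=\sum_n g(\zeta_n)T_n$. The summands are again pairwise asymptotically orthogonal and each $g(\zeta_n)T_n$ is unitarily equivalent to $|g(\zeta_n)|S^{(N)}$. Passing to the orthogonal sum via Theorem~\ref{orthogonal}, using $\psi(\alpha s)\sim\alpha^p\psi(s)$ and the first assertion $D_\psi(S^{(N)})=\frac1ND_\psi(H_{\overline z})$, elementary $\limsup$/$\liminf$ estimates yield
$$ D_\psi(H_{\overline z}P_\omega M_{g_N})\le\Big(\tfrac1N\sum_{n=1}^N|g(\zeta_n)|^p\Big)D_\psi(H_{\overline z}) \quad\text{and}\quad d_\psi(H_{\overline z}P_\omega M_{g_N})\ge\Big(\tfrac1N\sum_{n=1}^N|g(\zeta_n)|^p\Big)d_\psi(H_{\overline z}). $$
As $N\to\infty$ the Riemann sums $\frac1N\sum_n|g(\zeta_n)|^p$ converge to $\|g\|_p^p$. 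To replace $g_N$ by $g$, split $g-g_N=(g-g_N)\chi_{A_N}+g\chi_{\DD\setminus A_N}$: the second term produces a $\Sigma^0$ operator as above, while uniform continuity of $g$ on $\overline\DD$ gives $\|(g-g_N)\chi_{A_N}\|_\infty\le\varepsilon_N\to0$, so Lemma~\ref{lem1}(1) yields $D_\psi(H_{\overline z}P_\omega M_{g-g_N})\le\varepsilon_N^p D_\psi(H_{\overline z})$. Feeding this together with the quasi-triangle inequality $D_\psi(A+B)\le a^{-p}D_\psi(A)+(1-a)^{-p}D_\psi(B)$ and its $\liminf$ counterpart (both consequences of $s_{m+n+1}(A+B)\le s_{m+1}(A)+s_{n+1}(B)$ and Proposition~\ref{prop ABC}) into $M_g=M_{g_N}+M_{g-g_N}$, then letting $N\to\infty$ and finally $a\to1^-$, gives $D_\psi(H_{\overline z}P_\omega M_g)\le\|g\|_p^p D_\psi(H_{\overline z})$ and $d_\psi(H_{\overline z}P_\omega M_g)\ge\|g\|_p^p d_\psi(H_{\overline z})$.

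The main obstacle is obtaining the \emph{sharp} constant in the asymptotically orthogonal but non-equivalent sum $\sum_n g(\zeta_n)T_n$: since $D_\psi$ is a $\limsup$, naive additivity can fail, so it is essential that Theorem~\ref{orthogonal} reduces the sum to a genuine orthogonal sum and that only one-sided bounds (sub-additivity for $D_\psi$, super-additivity for $d_\psi$) are needed, the spurious factors $a^{-p},(1-a)^{-p}$ being driven to $1$ only in the last limit. A second delicate point is the assertion that a symbol supported on a compact subset of $\DD$ yields an operator in $\Sigma^0$: Hilbert--Schmidt alone does not suffice when $p>1$ (one needs decay faster than $n^{-1/p}$ with $1/p<1$), so the analytic continuation of the reproducing kernel, forcing geometric decay of the singular values, is genuinely required.
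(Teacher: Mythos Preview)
Your argument is correct and follows the paper's route: tile the outer annulus into $N$ rotated sectors, apply Theorem~\ref{orthogonal} together with rotational unitary equivalence for the first assertion, then approximate a continuous $g$ by step symbols $g_N$ and control the remainder via Lemma~\ref{lem1}(1) for the second; your explicit splitting $g-g_N=(g-g_N)\chi_{A_N}+g\chi_{\DD\setminus A_N}$ is in fact more careful than the paper, which simply writes $\|g-g_N\|_\infty$ without isolating the interior piece. One small caveat: for a general $\omega\in\cW^\ast$ the reproducing kernel need not extend analytically past $\overline\DD$, so your justification that the compactly supported piece lies in $\bigcap_q\cS_q$ should rather appeal to $s_n(P_\omega M_{\chi_K})^2=\lambda_n(T_{\chi_K})\le\lambda_n(T_r)\lesssim e^{-cn}$ for $K\subset r\DD$ (this estimate is established in the paper near \eqref{Jr}).
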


\begin{proof}
Let
	$$
	R_N=\{z=re^{i\theta}\in\overline{\mathbb{D}}:\quad r>1-1/N \},
	$$
	$$
	R_N^k=\{z=re^{i\theta}\in R_N:\quad \frac{2\pi k}{N}\le\theta<  \frac{2\pi (k+1)}{N}\},\quad k=0,1...,N-1.
	$$
	And let
	$$
	h:=\displaystyle\sum_{k=0}^{N-1}\chi_{R_N^{k}} = 1- \chi_{\DD \setminus R_N} .
	$$
\noindent Hence, by Corollary \ref {cor1}, Lemma \ref{lem1} and Theorem \ref{orthogonal}, we have
\[
\begin{array}{lll}
	D_{\psi}(H_{\overline{z}})&= &D_{\psi}(H_{\overline{z}}P_\omega M_h)\\
	&= & D_{\psi}\left( \displaystyle\sum_{k=0}^{N-1}H_{\overline{z}}P_\omega M_{\chi_{R_N^k}}\right)\\
	&=  & \displaystyle\sum_{k=0}^{N-1}D_{\psi} (H_{\overline{z}}P_\omega M_{\chi_{R_N^k}})\\
	& = & N D_{\psi} (H_{\overline{z}}P_\omega M_{\chi_{R_N^\delta}}). 
\end{array}
\]
Then $D_{\psi}(H_{\overline{z}}P_\omega M_{\chi_{R_N^\delta}})=\frac1N D_{\psi}(H_{\overline{z}})$. Similarly, we obtain $d_{\psi}(H_{\overline{z}}P_\omega M_{\chi_{R_N^\delta}})=\frac1N d_{\psi}(H_{\overline{z}})$.\\
Let $\xi_k$ be the center of the arc $R_N^k\cap\partial\mathbb{D}$ and let 
$$
g_N =\displaystyle\sum_{k=0}^{N-1}g(\xi _k)\chi_{R_N^{k}}.
$$
By Proposition \ref {prop ABC}, we have 
$$
D^{\frac{1}{p+1}}_\psi (H_{\overline{z}}P_\omega M_{g}) \leq D^{\frac{1}{p+1}}_\psi (H_{\overline{z}}P_\omega M_{g_N})+ D^{\frac{1}{p+1}}_\psi (H_{\overline{z}}P_\omega M_{g-g_N})
$$
On one hand, by Lemma \ref{lem1}, we have
$$
D_\psi (H_{\overline{z}}P_\omega M_{g-g_N})\leq \| g-g_N\| ^p_{\infty}D_\psi (H_{\overline{z}}).
$$
On the other hand, by Lemma \ref{lem1}, Theorem \ref{orthogonal} and the first assertion of this proposition, we obtain
\[
\begin{array}{lll}
D_\psi (H_{\overline{z}}P_\omega M_{g_N})& \leq & \displaystyle \sum _{k=0}^{N-1}D_\psi (g(\xi _k)H_{\overline{z}}P_\omega M_{\chi_{R^k_N}})\\
& = & \displaystyle \sum _{k=0}^{N-1}|g(\xi _k) |^pD_\psi (H_{\overline{z}}P_\omega M_{\chi_{R^k_N}})\\
& = & \frac{1}{N}\displaystyle \sum _{k=0}^{N-1}|g(\xi _k)|^pD_\psi (H_{\overline{z}}).\\
\end{array}
\]
Combining these inequalities and letting $N$ going to $\infty$, we obtain 
$$
D_\psi (H_{\overline{z}}P_\omega M_{g})\leq \| g\| _p^pD_\psi (H_{\overline{z}}).
$$
The lower estimates can be obtained by the same arguments.
	\end{proof}


Let $\Phi :R \DD\times R \DD\to\mathbb{C}$ be an analytic function, where $R>1$. Let $A_\Phi:A^2_\omega\to L^2_\omega$ be the operator defined by
\begin{equation}\label{A}
A_\Phi f(z)=\int_{\mathbb{D}}(\overline{z}-\overline{\xi})^2 K(z,\xi)\overline{ \Phi (z,\xi)}f(\xi)\omega(\xi)dA(\xi),\quad z\in\mathbb{D}.
\end{equation}
And let $A$ be the operator given by
\begin{equation}\label{A}
Af(z)=\int_{\mathbb{D}}(\overline{z}-\overline{\xi})^2 K(z,\xi) f(\xi)\omega(\xi)dA(\xi),\quad z\in\mathbb{D}.
\end{equation}
Now, we introduce notations which will be used in the proof of the next lemma. Let $r\in (0,1)$ and let $J_r$ be the embedding operator from $A^2_\omega$ to $ L^2(\omega \chi _{r\DD}dA)$. Note that $T_{r} := J_r^\ast J_r$ is the Toeplitz operator with symbol $\chi _{r\DD}$. Namely, 
$$
T_{r}f = \displaystyle \int _{r\DD}f(\zeta)K(.,\zeta) \omega (\zeta)dA(\zeta).
$$
By \cite {EE},  there exists $C= C(\omega)$ such that 
$$
\displaystyle \sum _{n\geq 1}\lambda ^p_n(T_{r}) \leq \frac{C}{p}\displaystyle \int _0^r\frac{d\rho}{\tau^2_\omega(\rho)}, \quad p\in (0,1).
$$
This implies that 
$$
\lambda _n(T_{r}) \leq \displaystyle \inf _{p\in (0,1)}\left ( \frac{C}{pn}\displaystyle \int _0^r\frac{d\rho}{\tau^2_\omega(\rho)}\right )^\frac 1p = \exp \left ( -\frac {e^{-1}}{C \displaystyle \int _0^r\frac{d\rho}{\tau ^2 _\omega (\rho)}} n\right).
$$
If we suppose that $\tau ^2_\omega (z)\asymp (1-|z|^2)^{2+\beta }\nu ^2(\log (1/1-|z|^2))$, then
$$
\lambda _n(T_r) \leq \exp \left ( -C(\omega)(1-r)^{1+\beta} \nu ^2(\log (1/1-r)) n\right).
$$
In particular we have 
\begin{equation}\label {Jr}
\lambda _n(T_r) = O\left(  \exp \left ( -C(\omega, \varepsilon)(1-r)^{1+\beta-\varepsilon } n\right) \right ), \quad \forall \varepsilon >0.
\end{equation}
 \begin{lem}\label{s_n(APhi)}
	Let $\omega \in \cal {W}^*$ be a radial weight such that $\tau ^2_\omega (z)\asymp  (1-|z|^2)^{2+\beta}\nu ^2(\log  (\frac{1}{1-|z|^2}))$, where $\beta \geq 0$ and $\nu $ is a monotone function such that $\nu (2t) \asymp \nu (t)$. Then, for all $ \varepsilon>0$, we have
	$$
	s_n(A_\Phi)= O\left (\frac {1}{n^{2/p -\varepsilon}}\right ), \quad p = \frac{2(1+\beta)}{2+\beta} .
	$$ 	
\end{lem}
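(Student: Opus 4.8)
The plan is to exploit the exponential decay \eqref{Jr} of the eigenvalues of the disc–truncation Toeplitz operators $T_r=J_r^\ast J_r$, by splitting the $\xi$–integration in the definition of $A_\Phi$ at a radius $r=r(n)$ to be optimized at the end. Since $\Phi$ is analytic on $R\mathbb{D}\times R\mathbb{D}$ with $R>1$, it is continuous, hence bounded, on $\overline{\mathbb{D}}\times\overline{\mathbb{D}}$; so $\overline{\Phi}$ contributes only a constant and it suffices to control the integral operator with kernel $(\bar z-\bar\xi)^2K(z,\xi)$. For $r\in(0,1)$ I write $A_\Phi=A_\Phi^{\mathrm{in}}+A_\Phi^{\mathrm{out}}$, where $A_\Phi^{\mathrm{in}}$ (resp. $A_\Phi^{\mathrm{out}}$) restricts the integration to $r\mathbb{D}$ (resp. to $\mathbb{D}\setminus r\mathbb{D}$).

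For the inner piece I would factor $A_\Phi^{\mathrm{in}}=\widehat{A}_r J_r$, where $\widehat{A}_r g(z)=\int_{r\mathbb{D}}(\bar z-\bar\xi)^2K(z,\xi)\overline{\Phi(z,\xi)}g(\xi)\omega(\xi)\,dA(\xi)$ acts on $L^2(\chi_{r\mathbb{D}}\omega\,dA)$. The operator $\widehat{A}_r$ is bounded uniformly in $r$, being dominated by the integral operator with kernel $|z-\xi|^2|K(z,\xi)|$, whose boundedness follows from a Schur test using the off-diagonal decay of $K$ for weights in $\mathcal{W}$. Since $J_r^\ast J_r=T_r$, estimate \eqref{Jr} gives $s_n(J_r)=\lambda_n(T_r)^{1/2}=O\!\big(\exp(-c(\omega,\varepsilon)(1-r)^{1+\beta-\varepsilon}n)\big)$, and therefore $s_n(A_\Phi^{\mathrm{in}})\le\|\widehat{A}_r\|\,s_n(J_r)$ decays faster than any power of $n$ once $r$ is close enough to $1$.

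The heart of the matter is the outer piece, for which I claim the boundedness estimate
\begin{equation*}
\|A_\Phi^{\mathrm{out}}\|\lesssim \sup_{|\xi|\ge r}\tau_\omega^2(\xi)\asymp\tau_\omega^2(r)\asymp (1-r)^{2+\beta}\nu^2\!\big(\log\tfrac{1}{1-r}\big).
\end{equation*}
This is the second–order analogue of the boundedness half of Theorem \ref{bdd}: the factor $(\bar z-\bar\xi)^2$ now plays the role that $\tau_\omega|\phi'|$ plays for $H_{\overline{\phi}}$, and a Berezin–transform computation indeed gives $\langle A_\Phi k_a,k_a\rangle\asymp\tau_\omega^2(a)\overline{\Phi(a,a)}$. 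The cleanest way to obtain it is a localized energy estimate $\|A_\Phi^{\mathrm{out}}f\|^2\lesssim\int_{|\xi|\ge r}|f|^2\tau_\omega^4\omega\,dA$, the second–order counterpart of \eqref{d-bar2}, which yields $A_\Phi^{\mathrm{out}\,\ast}A_\Phi^{\mathrm{out}}\lesssim T_{\chi_{\{|\xi|\ge r\}}\tau_\omega^4 dA}$ and hence $\|A_\Phi^{\mathrm{out}}\|^2\lesssim\sup_{|\xi|\ge r}\tau_\omega^4$ by the boundedness criterion for Toeplitz operators. Proving this energy estimate — via a Schur test on the kernel $|z-\xi|^2|K(z,\xi)|$ together with the pointwise bound $|z-\xi|^2|K(z,\xi)|\lesssim\tau_\omega(z)\tau_\omega(\xi)\|K_z\|\,\|K_\xi\|\,e^{-c\,d_\tau(z,\xi)}$, where $d_\tau$ is the distance for the metric $|dz|/\tau_\omega$ and the exponential absorbs the polynomial growth away from the diagonal — is the main obstacle, and it is where the hypotheses $\omega\in\mathcal{W}^\ast$ and the precise shape of $\tau_\omega$ really enter.

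Granting the two bounds, I would finish by optimization. From $s_{2n}(A_\Phi)\lesssim s_n(A_\Phi^{\mathrm{in}})+\|A_\Phi^{\mathrm{out}}\|$ one gets, for every $r\in(0,1)$, a bound of the form $\exp(-c(1-r)^{1+\beta-\varepsilon}n)+(1-r)^{2+\beta}\nu^2(\log\tfrac{1}{1-r})$. Choosing $1-r=(K\log n/n)^{1/(1+\beta-\varepsilon)}$ with $K$ large makes the first term $O(n^{-2/p-1})$, while the second becomes $O\!\big((\log n)^{\kappa}\,n^{-(2+\beta)/(1+\beta-\varepsilon)}\big)$ for some $\kappa>0$. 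Since $(2+\beta)/(1+\beta-\varepsilon)\to (2+\beta)/(1+\beta)=2/p$ as $\varepsilon\to0^+$ and the factors $\nu$ and $\log n$ are subpolynomial, this gives $s_n(A_\Phi)=O(n^{-2/p+\varepsilon'})$ for every $\varepsilon'>0$, as asserted. (An alternative avoiding \eqref{Jr} is to establish the global energy estimate $\|A_\Phi f\|^2\lesssim\int|f|^2\tau_\omega^4\omega\,dA$ directly, so that $A_\Phi^\ast A_\Phi\lesssim T_{\tau_\omega^4 dA}$ and the Schatten estimates for Toeplitz operators force $A_\Phi\in\mathcal{S}_q$ for all $q>p/4$; one may also first reduce to $\Phi\equiv1$ by expanding $\Phi$ in a power series with geometrically decaying coefficients and using the $\mathcal{S}_q$ quasi-triangle inequality, and then diagonalize $A$ in the orthogonal system $(z^m/\|z^m\|)_m$, whose images under $A$ carry pairwise distinct angular frequencies when $\omega$ is radial.)
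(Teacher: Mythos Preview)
Your overall architecture (split at a radius $r$, kill the inner piece with the exponential decay \eqref{Jr}, bound the outer piece by a power of $\tau_\omega(r)$, then optimize) is exactly the right shape, and your parenthetical reduction from general $\Phi$ to $\Phi\equiv 1$ by power-series expansion is precisely Dostani\'c's argument that the paper uses at the end. The gap is in how you justify the two operator bounds. Both the uniform boundedness of $\widehat{A}_r$ and the ``second-order energy estimate'' $\|A_\Phi^{\mathrm{out}}f\|^2\lesssim\int_{|\xi|\ge r}|f|^2\tau_\omega^4\omega\,dA$ are argued via a Schur test using a pointwise off-diagonal bound $|z-\xi|^2|K(z,\xi)|\lesssim \tau_\omega(z)\tau_\omega(\xi)\|K_z\|\,\|K_\xi\|e^{-c\,d_\tau(z,\xi)}$. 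No such estimate is available in the $\mathcal{W}^\ast$ framework: the only kernel information in the paper is the near-diagonal lower bound \eqref{C4} and the qualitative condition \eqref{C1}. Proving exponential off-diagonal decay of $K$ in this generality would be a substantial independent result, so as written the argument does not close.

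The paper bypasses kernel estimates entirely by an observation you are missing: for $\Phi\equiv 1$ one has $Af=H_{\bar z^2}f-2H_{\bar z}P_\omega(\bar z f)\in (A^2_\omega)^\perp$, so $Af$ is the $L^2_\omega$-minimal solution of $\bar\partial u=2H_{\bar z}f$. Lemma~\ref{dbar} then gives $\|Af\|^2\lesssim\int_\DD|H_{\bar z}f|^2\tau_\omega^2\,\omega\,dA$. Now split \emph{this} integral (in the output variable $z$, not the input variable $\xi$) at radius $r$: on $r\DD$ use $|H_{\bar z}f|^2\le 2|f|^2+2|P_\omega(\bar z f)|^2$, and on $\DD\setminus r\DD$ pull out $\tau_\omega^2(r)$ and apply Lemma~\ref{dbar} once more to $H_{\bar z}f$. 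This yields
\[
A^\ast A\ \lesssim\ T_r+(P_\omega\bar z)^\ast T_r(P_\omega\bar z)+\tau_\omega^2(r)\,T_{\tau_\omega^2},
\]
hence $s_{3n}^2(A)\lesssim \lambda_n(T_r)+\tau_\omega^2(r)\lambda_n(T_{\tau_\omega^2})$. Since $T_{\tau_\omega^2}\in\mathcal{S}_{p/2+\varepsilon}$, the optimization in $r$ is then exactly your final paragraph. In short: replace your Schur/kernel step by the $\bar\partial$-minimality of $A$ and a double application of Lemma~\ref{dbar}, and your proof goes through.
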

\begin{proof}
First, we prove the result for $A$ which corresponds to $\Phi =1$. Remark that 
$$
A f= H_{\bar{z}^2}f- 2H_{\bar{z}}P_\omega {\bar{z}}f \in A_\omega  ^{2\perp}  .
$$
Then $Af$ is the $L^2_\omega $-minimal solution of $\bar{\partial} u= 2H_{\bar z}f$. Applying Lemma \ref{dbar} twice, we get
\[
\begin{array}{lll}
\| A f\|^2& \lesssim & \displaystyle \int _\DD |2H_{\bar z}f(z)|^2 \tau _\omega ^2(z)\omega (z)dA(z)\\
& \lesssim & \displaystyle \int _{r\DD} |H_{\bar z}f(z)|^2 \omega (z)dA(z)+\tau ^2_\omega (r)\displaystyle \int _{\DD\setminus r\DD} |H_{\bar z}f(z)|^2 \omega (z)dA(z)\\
& \lesssim &  \displaystyle \int _{r\DD} |f(z)|^2 \omega (z)dA(z)+ \displaystyle \int _{r\DD} |P_\omega {\bar z}f(z)|^2 \omega (z)dA(z)+
\tau ^2_\omega (r)\displaystyle \int _{\DD} |f(z)|^2\tau ^2_\omega (z) \omega (z)dA(z)\\
& \lesssim & \| J_r f\|^2+ \| J_rP_\omega \bar{z} f\|^2+ \tau_\omega ^2(r) \langle T_{\tau_\omega ^2}f , f \rangle.
\end{array}
\]
Then we obtain
$$
A^\ast A \lesssim T_r + (P_\omega \bar{z} ) ^\ast T_r  P_\omega \bar{z} + \tau ^2 _\omega(r)T_{\tau_\omega ^2}
$$
This implies that 
\begin{equation}\label {sA}
s^2_{3n}(A) \lesssim \lambda _n( T_r)+\tau_\omega ^2(r)\lambda _n (T_{\tau_\omega ^2}).
\end{equation}
Since $\displaystyle \int ^1\frac{dr}{\tau_\omega ^{2-p -\varepsilon}(r)}< \infty$, by \cite {EE}, $T_{\tau_\omega ^2}\in S_{\frac{p}{2} +\varepsilon}$ for every $\varepsilon >0$. Then, 
\begin{equation}\label {Ttau2}
\lambda _n (T_{\tau_\omega ^2}) = O\left ( \frac{1}{n ^{\frac2p -\varepsilon}}\right ), \quad \mbox{for all}\ \varepsilon >0.
\end{equation}
Combining inequalities (\ref {Jr}), (\ref{sA}) and (\ref {Ttau2}), we obtain 
$$
s^2_{3n}(A)\lesssim \displaystyle \inf _{r\in (0,1)}\left (\exp \left ( -C(\omega, \varepsilon)(1-r)^{1+\beta-\varepsilon } n\right)+ \frac{\tau_\omega ^2(r)}{n ^{\frac2p -\varepsilon}}\right ), \quad \mbox{for all}\ \varepsilon >0.
$$
For a suitable choice of $r$, we obtain the result for $A$. \\
To complete the proof of the Lemma we use the argument given by M. Dostanic in \cite {Dos}. Suppose that $\Phi$ is analytic and bounded by $M$ on $R\DD\times R\DD$. Clearly, we have 
$$\Phi (z, \zeta) = \displaystyle \sum_{k\geq 0}\frac{\Phi ^{(k)}(z,0)}{\fact{k}}\zeta ^k\ \mbox{ and}\  \frac{|\Phi ^{(k)}(z,0)|}{\fact{k}} \leq \frac{M}{R^k}.$$
And since $A_\Phi f=  \displaystyle \sum_{k\geq 0}\frac{\overline {\Phi ^{(k)}(z,0)}}{\fact{k}}AP_\omega \bar{\zeta} ^kf$, we obtain 
$$
s_{(N+2)m}(A_\Phi)\leq  \displaystyle \sum_{k\geq 0}^N \frac{| {\Phi ^{(k)}(z,0)}|}{\fact{k}} s_m(A) + \displaystyle \sum_{k>N} \frac{|\Phi ^{(k)}(z,0)|}{\fact{k}} \|A\|\leq \frac{M}{R-1}\left ( s_m(A) +\|A\| \right ).
$$
Now for $Nm = n$ and $m \sim n^{1-\varepsilon}  $,  we get the result.
	
\end{proof}

\begin{proof}[Proof of Theorem \ref{asymptotics}]
Let $\psi (t)= \frac{1}{\gamma ^p}t^p\nu ^{p/1+\beta}(\log 1/t)$. Since $ s_n(H_{\overline{z}}) \sim \frac{\gamma}{n^{\frac{1}{p}}\nu ^\frac{1}{1+\beta}(\log n)}$, $D_\psi (H_{\overline{z}})= d_\psi (H_{\overline{z}})=1$. Note that 
$$
s_n(H_{\bar{\phi}}) \sim  s_n(H_{\overline{z}}) \| \phi'\| _p \iff D_\psi (H_{\bar{\phi}})= d_\psi (H_{\bar{\phi}})= \| \phi'\| _p^p.
$$
First, suppose that $\phi$ is analytic in a neighborhood of $\overline{\mathbb{D}}$. Then there exist $R>1$ and an analytic bounded function $\Phi$ on $R\DD\times R\DD$ such that
	$$
	\phi(z)-\phi(w)=(z-w)\phi'(w)+(z-w)^2\Phi(z,w),\quad z,w\in R \DD.
	$$
	We have 
	$$
	H_{\overline{\phi}}P_\omega=H_{\overline{z}}P_\omega \bar{\phi'}+A_\Phi P_\omega.
	$$
	By Lemma \ref{s_n(APhi)}
	$$
	s_n(A_\Phi P_\omega)=o\left( s_n(H_{\overline{z}}P_\omega)\right),\quad n\to\infty.
	$$
	So, 
	$$
	D_{\psi}(A_\Phi P_\omega)=0.
	$$
	By Corollary \ref{cor1}, we deduce that 
	$$
	D_{\psi}(H_{\overline{\phi}})=D_{\psi}(H_{\overline{z}}P_\omega \bar{\phi'})\quad\mbox{and}\quad d_{\psi}(H_{\overline{\phi}})=d_{\psi}(H_{\overline{z}}P_\omega \bar{\phi'}).
	$$
	We obtain, by Proposition \ref{prop2}, that
	$$
	d_{\psi}( H_{\overline{z}})\|\phi'\|^{p}_{p}\leq d_{\psi}(H_{\overline{\phi}})\leq D_{\psi}(H_{\overline{\phi}})\leq D_{\psi}( H_{\overline{z}})\|\phi'\|^{p}_{p}.
	$$
Since  $D_{\psi}( H_{\overline{z}})=d_{\psi}( H_{\overline{z}})=1$, we obtain 
$$
d_{\psi}(H_{\overline{\phi}})=D_{\psi}(H_{\overline{\phi}})=\|\phi'\|^{p}_{p}.
$$
Now, suppose that $\phi ' \in H^p$. By Theorem \ref{Hp}, we have
$$
s_n (H_{\bar{\phi} -\bar{\phi}_r})\leq C\| \phi '-\phi _r'\|_p s_n(H_{\overline{z}}), \quad n\geq 1.
$$
This implies that $D_\psi (	H_{\bar{\phi} -\bar{\phi}_r}) \leq C^p\| \phi '-\phi _r'\|_p^pD_\psi (H_{\overline{z}})$. Then we have
\[
\begin{array}{lll}
|D_{\psi}(H_{\overline{\phi}})^{\frac{1}{p+1}}-\left ( D_{\psi}( H_{\overline{z}})\|\phi_r'\|^{p}_{p}\right )^{\frac{1}{p+1}}|& = & |D_{\psi}(H_{\overline{\phi}})^{\frac{1}{p+1}}-D_{\psi}(H_{\overline{\phi_r}})^{\frac{1}{p+1}}|\\
&\leq & D_{\psi}(H_{\overline{\phi}-\overline{\phi_r} })^{\frac{1}{p+1}}\\
&\leq & \left ( C^p\| \phi '-\phi_r'\| _p ^p D_\psi (H_{\overline {z}})\right ) ^{\frac{1}{p+1}}.\\
\end{array}
\]
When $r\to 1^-$, we obtain  $D_{\psi}(H_{\overline{\phi}})=D_{\psi}( H_{\overline{z}})\|\phi'\|^{p}_{p}=\|\phi'\|^{p}_{p}$. With the same arguments we have $d_{\psi}(H_{\overline{\phi}})=d_{\psi}( H_{\overline{z}})\|\phi'\|^{p}_{p}$. The proof is complete.
\end{proof}
Similarly, one can prove the following result which corresponds to the situation $\beta=0$  and $\nu \asymp 1$ in Theorem \ref{asymptotics}.

\begin{thm}\label{beta=0}
Let $\omega \in \cal {W}^*$ be a radial weight such that $\tau ^2_\omega (z)\asymp  (1-|z|^2)^{2}$.  Suppose that 
$$
s_n(H_{\overline{z}}) \sim \frac{\gamma}{n},
$$
where $\gamma > 0$. Then, for $\phi ' \in H^{1}$, we have
$$s_n(H_{\overline{\phi}}) \sim  \frac{\gamma}{n}\| \phi '\|_1.$$
\end{thm}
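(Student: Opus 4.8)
The plan is to run the proof of Theorem~\ref{asymptotics} essentially verbatim with the specialization $p=1$, $\beta=0$, $\nu\asymp1$, replacing only the single ingredient that was genuinely specific to $\beta>0$. First I would set $\psi(t)=t/\gamma$, which belongs to $\cC_1$, and record the two translations into the counting functionals: the hypothesis $s_n(H_{\overline{z}})\sim\gamma/n$ is exactly $D_\psi(H_{\overline{z}})=d_\psi(H_{\overline{z}})=1$, while the desired conclusion $s_n(H_{\overline{\phi}})\sim\frac{\gamma}{n}\|\phi'\|_1$ is equivalent to $D_\psi(H_{\overline{\phi}})=d_\psi(H_{\overline{\phi}})=\|\phi'\|_1$.

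Next I would treat the case where $\phi$ is analytic in a neighborhood of $\overline{\DD}$. Using the factorization $\phi(z)-\phi(w)=(z-w)\phi'(w)+(z-w)^2\Phi(z,w)$ with $\Phi$ analytic and bounded on $R\DD\times R\DD$, one has $H_{\overline{\phi}}P_\omega=H_{\overline{z}}P_\omega\overline{\phi'}+A_\Phi P_\omega$. Lemma~\ref{s_n(APhi)} is already stated for $\beta\geq0$ and yields $s_n(A_\Phi)=O(n^{-2+\varepsilon})$, hence $s_n(A_\Phi P_\omega)=o(s_n(H_{\overline{z}}))$ and $D_\psi(A_\Phi P_\omega)=0$. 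By Corollary~\ref{cor1} the functionals $D_\psi$ and $d_\psi$ of $H_{\overline{\phi}}$ then coincide with those of $H_{\overline{z}}P_\omega\overline{\phi'}$, and Proposition~\ref{prop2}, applied to the continuous symbol $g=\phi'$ on $\overline{\DD}$, sandwiches these between $d_\psi(H_{\overline{z}})\|\phi'\|_1$ and $D_\psi(H_{\overline{z}})\|\phi'\|_1$; since both equal $\|\phi'\|_1$, the smooth case is settled.

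The only real change is the density step. I would approximate a general $\phi$ with $\phi'\in H^1$ by its dilates $\phi_r(z)=\phi(rz)$, which are smooth up to the boundary, so the previous paragraph gives $D_\psi(H_{\overline{\phi_r}})=d_\psi(H_{\overline{\phi_r}})=\|\phi_r'\|_1$. Here I cannot invoke Theorem~\ref{Hp}, which requires $\beta>0$ (indeed Proposition~\ref{CE} shows the corresponding logarithmic statement fails when $\beta=0$); instead I would use the first assertion of Theorem~\ref{main result 2} in the case $\beta=0$, $p=1$, whose proof furnishes the uniform bound $s_n(H_{\overline{\psi}})\lesssim\|\psi'\|_1/n\asymp\|\psi'\|_1\,s_n(H_{\overline{z}})$ with a constant independent of $\psi$. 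Applying this to $\psi=\phi-\phi_r$ gives $D_\psi(H_{\overline{\phi}-\overline{\phi_r}})\lesssim\|\phi'-\phi_r'\|_1\,D_\psi(H_{\overline{z}})$. Combining this with the quasi-triangle inequality of Proposition~\ref{prop ABC} (subadditivity of $D_\psi^{1/2}$ for $\psi\in\cC_1$) yields
\[
\bigl|D_\psi(H_{\overline{\phi}})^{1/2}-\|\phi_r'\|_1^{1/2}\bigr|\leq D_\psi(H_{\overline{\phi}-\overline{\phi_r}})^{1/2}\lesssim\|\phi'-\phi_r'\|_1^{1/2},
\]
and the same bound for $d_\psi$. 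Since $\phi_r'\to\phi'$ in $H^1$ as $r\to1^-$, letting $r\to1^-$ forces $D_\psi(H_{\overline{\phi}})=d_\psi(H_{\overline{\phi}})=\|\phi'\|_1$, which is the claim.

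I expect the main obstacle to be precisely this final substitution: the boundary regularity that $\beta>0$ delivered through Theorem~\ref{Hp} is unavailable at $\beta=0$, so one must verify that the $\beta=0$, $p=1$ estimate extracted from Theorem~\ref{main result 2} is genuinely quantitative and \emph{uniform} in $\phi$. This can be read off from the non-tangential maximal function bound $\cR_{\phi,\omega}^+(x)\lesssim\|\phi'\|_1/x$ used in Section~6, together with Theorem~\ref{majoration}, whose implied constant $C(p,A)$ does not depend on $\phi$. Everything else, including the use of Theorem~\ref{orthogonal} inside Proposition~\ref{prop2}, transfers unchanged from the proof of Theorem~\ref{asymptotics}.
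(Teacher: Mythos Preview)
Your proposal is correct and is precisely the argument the paper has in mind when it writes ``Similarly, one can prove the following result\ldots'': you rerun the proof of Theorem~\ref{asymptotics} with $p=1$, $\psi(t)=t/\gamma$, and in the density step you replace the appeal to Theorem~\ref{Hp} (which genuinely requires $\beta>0$) by the uniform bound $s_n(H_{\overline{\phi}})\lesssim\|\phi'\|_{H^1}/n$ that the proof of the first assertion of Theorem~\ref{main result 2} actually delivers via $\cR_{\phi,\omega}^+(x)\lesssim\|\phi'\|_{H^1}/x$ and Theorem~\ref{majoration}. Your care in checking that this last constant is independent of $\phi$ (and that no circularity with the second assertion of Theorem~\ref{main result 2} is introduced) is exactly what is needed.
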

\begin{proof}[Proof of the second assertion of Theorem \ref{main result 2}]
It suffices to combine Theorem \ref{beta=0} and Theorem \ref{asymptotics} with $\nu =1$.
\end{proof}
\section{An example}\label{Example}
For a radial weight $\omega$, the sequence of the singular values of the Hankel operator $H_{\overline{z}}$ on $A^2_\omega$ is the decreasing rearrangement of the sequence
$$\left( \left ( \frac{|| z^{n+1} ||^2}{|| z^n ||^2}	- \frac{|| z^n ||^2}{|| z^{n-1} ||^2} \right ) ^{1/2} \right )_n.$$
Recall that for the standard case $\omega _\alpha (z)= (\alpha +1)(1-|z|^2)^\alpha$, we have $s_n(H_{\overline{z}})\sim \frac{\sqrt{\alpha +1}}{n}$. In this section, we consider the weight 
$$ \omega(z) = \exp \left ( - \frac{\alpha}{ (\log \frac{1}{|z|^2} )^{\beta} }\right ),\quad \alpha , \ \beta > 0.$$
We have
\begin{align*}
|| z^n ||^2 &= \int_\mathbb{D} |z|^{2n}   \exp \left ( - \frac{\alpha}{ (\log \frac{1}{|z|^2} )^{\beta} }\right )dA(z) \\
				&= \int_0^1 r^{2n} \exp \left (-\frac{\alpha}{ (\log \frac{1}{r^2} )^{\beta} }\right )2rdr \\
				&= 	\int_0^{+\infty} \exp\left(-(n+1)x - \frac{\alpha}{x^\beta}\right)dx. \hspace{3cm} 
\end{align*}
Let $x_n := \left ( \frac{\alpha \beta}{n+1} \right )^{1/1+\beta }$ be the minimum of the function $(n+1)x+\frac{\alpha}{x^\beta}$. After the change of variable $u = \frac{x-x_n}{x_n}$, we get 

$$	|| z^n ||^2 = x_n \exp \left (-(n+1)x_n -\frac{ \alpha}{x_n^\beta} \right ) \int_{-1}^{+\infty} \exp \left (-\frac{\alpha}{x_n^\beta}h(u)\right ) du, 	$$ 
where $h(u) = \beta u + \frac{1}{(1+u)^\beta} -1$. \\
In the sequel, we will use Laplace method \cite{Erd} to get an expansion of $ || z^n ||^2$. By Laplace Theorem, we have 
$$
H(t) = \sqrt{\frac{t h''(0)}{2\pi}} \int_{-1}^{+\infty} \exp \left (-t h(u)\right ) du \sim 1,\quad t\to +\infty.
$$
Let $\eta>0$. We have 
$$
\sqrt{\frac{t h''(0)}{2\pi}} \int_{[-1,-\eta]\cup [\eta, +\infty[} \exp \left (-t h(u)\right ) du = O(e^{-c(\eta)t}),\quad t\to +\infty.
$$
Let 
\begin{align*}
H^+_\eta (t)& = \sqrt{\frac{t h''(0)}{2\pi}} \int_{0< u <\eta} \exp \left (-t h(u)\right )du,\\  
H^-_\eta (t)& = \sqrt{\frac{t h''(0)}{2\pi}} \int_{-\eta< u <0} \exp \left (-t h(u)\right )du.
\end{align*} 
Using the change of variable $v=h(u)$, when $\eta$ is small enough, one can write
$$
H^+_\eta (t)= \displaystyle \sum _{j=0}^{2N}\frac{c_j}{t^{j/2}}+o\left (\frac{1}{t^N}\right ),
$$
where $c_0, .., c_{2N} \in \RR$. The same is also true for $H^-_\eta$. Then
$$
H(t) = \displaystyle \sum _{j=0}^{2N}\frac{d_j}{t^{j/2}}+o\left (\frac{1}{t^N}\right ),
$$
where $d_0=1$ and $d_1,..,d_{2N} \in \RR$.
Let $A_n =  \sqrt{\frac {2\pi} {h''(0)}}\  x_n^{1+\frac{\beta}{2}} \exp \left (-(n+1)x_n - \frac{\alpha}{x_n^\beta } \right )$. We have 
$$
|| z^n ||^2= A_n H\left (\frac{\alpha}{x^{\beta}_n}\right ).
$$
By a direct calculation, we get
$$	
\frac{|| z^n ||^2}{|| z^{n-1} ||^2} = 1  - \frac{a}{n^{1/1+\beta}} + \frac{b}{n}+ \frac{c}{n^{1+1/1+\beta}}+ o(1/n^{1 + \frac{1}{\beta + 1}}),	
$$
where $a= (\alpha \beta )^{1/1+\beta}$. Then, we have 
$$
\frac{|| z^{n+1} ||^2}{|| z^{n} ||^2} -\frac{|| z^n ||^2}{|| z^{n-1} ||^2}  \sim \frac{\gamma ^2}{n^{\frac{\beta +2}{\beta+1} } }, 
$$
where $\gamma = \sqrt{ \frac{(\alpha \beta)^{1/1+\beta}}{1+\beta}}$. Finally, we obtain 
$$s_n(H_{\overline {z}})\sim \frac{\gamma }{n^{\frac{\beta +2}{2(\beta+1)}}}.$$



\section*{Appendix: Asymptotic Orthogonality}\label {Annexe}


Let $T$ be a compact operator between two complex Hilbert spaces. The decreasing sequence of singular values of $T$ will be denoted by $(s_n(T))_n$.The counting function of  the singular values of $T$ is denoted by
$$
n(s,T)=\# \{ n: \ s_n(T) \geq s\} ,\quad s>0.
$$	
Recall that $\cC _p$ denotes the class of  increasing continuous function $\psi:(0,+\infty)\to (0,+\infty)$ satisfying 
$$
\psi(\alpha t)\sim \alpha^p \psi(t),\quad(t\to 0^+),
$$
As before, $D _\psi (T), d _\psi (T)$ are given by
$$
D _\psi (T):=\displaystyle\limsup_{s\to 0^+}\psi(s)n(s,T)\quad\mbox{and}\quad d _\psi (T):=\displaystyle\liminf_{s\to 0^+}\psi(s)n(s,T).
$$
The goal of this Annex is to extend the results obtained for $\psi (t) = t^p$, \cite{BS, Pus}, to the class $\cC _p$. We give here the proof for completeness.
\begin{prop}\label{prop ABC}
	Let $T$ and $V$ be compact operators and $\lambda\in\mathbb{C}$. We have
	\begin{enumerate}
		\item $D_\psi(\lambda T)=|\lambda|^pD_\psi(T)$ and $d_\psi(\lambda T)=|\lambda|^pd_\psi(T)$.
		\item $D_\psi(T+V)^{\frac{1}{p+1}}\leq D_\psi(T)^{\frac{1}{p+1}}+ D_\psi(V)^{\frac{1}{p+1}}$.
		\item $d_\psi(T+V)^{\frac{1}{p+1}}\leq d_\psi(T)^{\frac{1}{p+1}}+ D_\psi(V)^{\frac{1}{p+1}}$.
		\item $D_\psi(TV)\leq 2D_{\psi\circ t^2}(T) D_{\psi\circ t^2}(V)$.
		
	\end{enumerate}
\end{prop}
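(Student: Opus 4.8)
The four assertions rest on two classical ingredients: the Ky Fan/Horn singular value inequalities, which turn into a subadditivity of the counting function $n(\cdot,\cdot)$, and the regular variation hypothesis $\psi(\alpha t)\sim\alpha^p\psi(t)$, which lets one trade a rescaling of the argument of $n$ for a power of the scaling factor. The computation I would isolate at the outset is that, for any fixed $\theta>0$,
\[
\limsup_{s\to0^+}\psi(s)\,n(\theta s,T)=\theta^{-p}D_\psi(T),\qquad \liminf_{s\to0^+}\psi(s)\,n(\theta s,T)=\theta^{-p}d_\psi(T),
\]
which follows by writing $\psi(s)n(\theta s,T)=\tfrac{\psi(s)}{\psi(\theta s)}\,\psi(\theta s)n(\theta s,T)$ and using $\psi(s)/\psi(\theta s)\to\theta^{-p}$ together with the fact that $\limsup(a_sb_s)=a\,\limsup b_s$ when $a_s\to a>0$. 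Assertion (1) is then immediate: the singular values of $\lambda T$ are $|\lambda|\,s_n(T)$, so $n(s,\lambda T)=n(s/|\lambda|,T)$ for $\lambda\neq0$, and the displayed identity with $\theta=1/|\lambda|$ gives $D_\psi(\lambda T)=|\lambda|^pD_\psi(T)$ and likewise for $d_\psi$; the case $\lambda=0$ is trivial.

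For (2) and (3) I would start from $s_{m+n+1}(T+V)\le s_{m+1}(T)+s_{n+1}(V)$. Taking $m=n(s_1,T)$ and $n=n(s_2,V)$ forces $s_{m+1}(T)<s_1$ and $s_{n+1}(V)<s_2$, whence $s_{m+n+1}(T+V)<s_1+s_2$, and therefore the subadditivity estimate
\[
n(s_1+s_2,T+V)\le n(s_1,T)+n(s_2,V).
\]
Splitting $s=\theta s+(1-\theta)s$ for a fixed $\theta\in(0,1)$ and applying this with the two limit relations above yields, for (2),
\[
D_\psi(T+V)\le \theta^{-p}D_\psi(T)+(1-\theta)^{-p}D_\psi(V),\qquad \theta\in(0,1).
\]
Minimizing the right-hand side over $\theta$ (the minimizer is $\theta=A^{1/(p+1)}/(A^{1/(p+1)}+B^{1/(p+1)})$ with $A=D_\psi(T)$, $B=D_\psi(V)$, and the minimal value is $(A^{1/(p+1)}+B^{1/(p+1)})^{p+1}$) gives exactly (2) after taking $(p+1)$-th roots. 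For (3) the only change is that $\liminf$ is not additive; I would instead use $\liminf(x_s+y_s)\le\liminf x_s+\limsup y_s$ applied to the two summands, so that the first term contributes $\theta^{-p}d_\psi(T)$ and the second $(1-\theta)^{-p}D_\psi(V)$, and the identical minimization produces (3).

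For (4) the analogous input is the multiplicative inequality $s_{m+n+1}(TV)\le s_{m+1}(T)\,s_{n+1}(V)$, which by the same counting argument gives $n(s_1s_2,TV)\le n(s_1,T)+n(s_2,V)$. Here the natural split is multiplicative, $s=(r\sqrt s)(\sqrt s/r)$ with a free parameter $r>0$. Since $(\psi\circ t^2)(\alpha t)=\psi(\alpha^2t^2)\sim\alpha^{2p}\psi(t^2)$, one has $\psi\circ t^2\in\cC_{2p}$, and the regular variation property converts $\psi(s)\,n(r\sqrt s,T)$ and $\psi(s)\,n(\sqrt s/r,V)$ into $r^{-2p}D_{\psi\circ t^2}(T)$ and $r^{2p}D_{\psi\circ t^2}(V)$ respectively, giving
\[
D_\psi(TV)\le r^{-2p}D_{\psi\circ t^2}(T)+r^{2p}D_{\psi\circ t^2}(V),\qquad r>0.
\]
Optimizing over $r$ by AM--GM yields twice the geometric mean of the two factors, i.e. $D_\psi(TV)\le 2\bigl(D_{\psi\circ t^2}(T)\,D_{\psi\circ t^2}(V)\bigr)^{1/2}$, which is the asserted product bound (a direct power count with $\psi(t)=t^p$ confirms this is the sharp exponent balance).

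The routine parts are the two limit identities and the one-variable minimizations. The steps I expect to require the most care are: the passage from the Ky Fan/Horn singular value inequalities to the subadditivity of $n(\cdot,\cdot)$, where the indexing and the strict inequalities coming from the definition of the counting function must be tracked exactly; and, in (3), the replacement of the naive additivity of $\liminf$ by the $\liminf$--$\limsup$ bound, which is precisely what forces the second summand to carry $D_\psi$ rather than $d_\psi$. In (4) the only genuinely new bookkeeping, relative to the additive case, is keeping the exponent $2p$ of $\psi\circ t^2$ consistent and choosing the balancing parameter $r$ correctly.
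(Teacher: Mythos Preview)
Your proof is correct and follows essentially the same route as the paper's: regular variation of $\psi$ for (1), the Ky Fan subadditivity $n(s_1+s_2,T+V)\le n(s_1,T)+n(s_2,V)$ followed by the split $s=\theta s+(1-\theta)s$ and optimization for (2)--(3), and the Horn multiplicative inequality with the split $s=(r\sqrt{s})(\sqrt{s}/r)$ for (4). One small remark on (4): your optimization correctly yields $D_\psi(TV)\le 2\bigl(D_{\psi\circ t^2}(T)\,D_{\psi\circ t^2}(V)\bigr)^{1/2}$, whereas the statement (and the paper's final displayed line) drops the square root---that is a typo in the paper, and your version is the one that actually follows from the argument; it suffices for the later application since one factor tends to $0$.
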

\begin{proof}
	$1.$ For $\lambda\neq 0$, we have
		\begin{align*}
		D_\psi(\lambda T)=\displaystyle\limsup_{s\to 0^+}\psi(s)n(s,\lambda T)&=\displaystyle\limsup_{s\to 0^+}\psi(s)n(s/|\lambda|,T)\\
		&=\displaystyle\limsup_{s\to 0^+}\psi(|\lambda|s)n(s,T)\\
		&=|\lambda|^p \displaystyle\limsup_{s\to 0^+}\psi(s)n(s,T)\\
		&=|\lambda|^pD_\psi(T).
		\end{align*}
		Similarly, we have $d_\psi(\lambda T)=|\lambda|^pd_\psi(T)$, $\forall \lambda\in\mathbb{C}$.\\
		$2.$ Let $x\in(0,1)$. For all $s>0$, we have
		\begin{align*}
		n(s,T+V)=n(xs+(1-x)s,T+V)&\leq n(xs,T)+n((1-x)s,V)\\
		&= n\left( s,\frac1x T\right) +n\left( s,\frac{1}{1-x}V\right) .
		\end{align*}
		Hence, $D_\psi(T+V)\leq D_\psi\left( \frac1x T\right) + D_\psi\left( \frac{1}{1-x} V\right) $. By the first assertion we obtain
		$$
		D_\psi(T+V)\leq x^{-p} D_\psi( T)+(1-x)^{-p}D_\psi( V).
		$$
		It follows that
		$$
		D_\psi(T+V)\leq \displaystyle\min_{x\in(0,1)} \left\lbrace x^{-p} D_\psi( T)+(1-x)^{-p}D_\psi( V)\right\rbrace,
		$$
		and $2.$ is obtained. \\
		By the same way we obtain also the third assertion.\\
		$4.$ For all $\alpha,s>0$, we have
		\begin{align*}
		n(s,TV)=n(\alpha\sqrt{s}\;\alpha^{-1}\sqrt{s},TV)&\leq n(\alpha\sqrt{s},T)+n(\alpha^{-1}\sqrt{s},V)\\
		&= n(\sqrt{s},\frac1\alpha T)+n(\sqrt{s},\alpha V).
		\end{align*}
		Hence
		\begin{align*}
		\psi(s)n(s,TV)
		&\leq \psi(s) n(\sqrt{s},\frac1\alpha T)+\psi(s)n(\sqrt{s},\alpha V)\\
		&= \tilde{\psi}(\sqrt{s}) n(\sqrt{s},\frac1\alpha T)+\tilde{\psi}(\sqrt{s})n(\sqrt{s},\alpha V),
		\end{align*}
		with $\tilde{\psi}=\psi (t^2)$. Then
		\begin{align*}
		D_\psi(TV)&\leq D_{\tilde{\psi}}\left( \frac1\alpha T\right) + D_{\tilde{\psi}}(\alpha V)\\
		&=\alpha^{-2p}D_{\tilde{\psi}}( T)+ \alpha^{2p} D_{\tilde{\psi}}( V).
		\end{align*}
		It follows that
	\begin{align*}
		D_\psi(TV)&\leq \displaystyle\min_{\alpha>0} \left\lbrace \alpha^{-2p}D_{\tilde{\psi}}( T)+ \alpha^{2p} D_{\tilde{\psi}}( V)\right\rbrace\\
		&\leq 2 D_{\tilde{\psi}}( T) D_{\tilde{\psi}}( V).
		\end{align*}	
\end{proof}
Let us denote
$$
\Sigma_\psi:=\left\lbrace T\in S_\infty:\quad\psi(s)n(s,T)=O(1) \right\rbrace 
$$
and
$$
\quad\mbox{and}\quad \Sigma^0_\psi:=\left\lbrace T\in S_\infty:\quad\psi(s)n(s,T)=o(1) \right\rbrace.
$$ 
The second and the third assertions of the Proposition \ref{prop ABC} imply the following corollary.
\begin{cor}\label{cor1}
	Let $T$ and $V$ be compact operators. Suppose that $V\in \Sigma^0_\psi$, then
	$$
	D_\psi(T+V)=D_\psi(T)\quad\mbox{and}\quad d_\psi(T+V)=d_\psi(T).
	$$
\end{cor}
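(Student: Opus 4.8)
The plan is to deduce the statement directly from the quasi-triangle inequalities (2) and (3) of Proposition \ref{prop ABC}, exploiting that membership in $\Sigma^0_\psi$ forces the relevant upper functional to vanish. First I would record the elementary observation that $V\in\Sigma^0_\psi$ means $\psi(s)n(s,V)=o(1)$ as $s\to 0^+$, so in particular $D_\psi(V)=\limsup_{s\to 0^+}\psi(s)n(s,V)=0$. Since $n(s,-V)=n(s,V)$ for every $s>0$ (the two operators share the same singular values), the same reasoning gives $D_\psi(-V)=0$ as well; this symmetry is the only point requiring a moment's care.

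For the statement on $D_\psi$, I would apply assertion (2) of Proposition \ref{prop ABC} to the pair $(T,V)$ to obtain
$$
D_\psi(T+V)^{\frac{1}{p+1}}\le D_\psi(T)^{\frac{1}{p+1}}+D_\psi(V)^{\frac{1}{p+1}}=D_\psi(T)^{\frac{1}{p+1}},
$$
that is $D_\psi(T+V)\le D_\psi(T)$. For the reverse inequality I would write $T=(T+V)+(-V)$ and apply (2) to the pair $(T+V,-V)$, which, using $D_\psi(-V)=0$, yields $D_\psi(T)\le D_\psi(T+V)$. Combining the two gives $D_\psi(T+V)=D_\psi(T)$.

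The argument for $d_\psi$ is identical in structure, now invoking assertion (3) of Proposition \ref{prop ABC} (which bounds $d_\psi$ of a sum by $d_\psi$ of the first summand plus $D_\psi$ of the second). Applied to $(T,V)$ it gives $d_\psi(T+V)\le d_\psi(T)$, and applied to $(T+V,-V)$ it gives $d_\psi(T)\le d_\psi(T+V)$, whence equality. There is no genuine obstacle here: the whole content lies in Proposition \ref{prop ABC}, and the corollary is just the specialization in which the perturbation $V$ satisfies $D_\psi(V)=0$. The only thing to verify by hand is that the $o(1)$ condition is preserved under $V\mapsto -V$, which is immediate from the equality of singular values.
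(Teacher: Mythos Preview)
Your proposal is correct and follows exactly the route the paper indicates: the corollary is stated as an immediate consequence of assertions (2) and (3) of Proposition \ref{prop ABC}, with no further argument given. One minor remark: for $D_\psi(-V)=0$ you could also simply invoke assertion (1) with $\lambda=-1$, which gives $D_\psi(-V)=|-1|^p D_\psi(V)=0$ directly.
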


Once we have that, we can state the Theorem of A. Pushnitski \big[\cite {Pus}, Theorem 2.2\big] in the following form 
\begin{thm}\label{orthogonal}
Let $T_1,T_2,...,T_n$ be a compact operators such that
$$
T_i^\ast T_j\in \Sigma^0_{\psi\circ\sqrt{.}} \quad\mbox{and}\quad T_i T_j^\ast\in \Sigma^0_{\psi\circ\sqrt{.}},\quad\forall i\neq j.
$$	
Then
$$
D_\psi\left( \displaystyle\sum_{k=1}^{n}T_k\right) =\displaystyle\limsup_{s\to 0^+}\psi(s)\displaystyle\sum_{k=1}^{n}n(s,T_k)\quad\mbox{and}\quad d_\psi\left( \displaystyle\sum_{k=1}^{n}T_k\right) =\displaystyle\liminf_{s\to 0^+}\psi(s)\displaystyle\sum_{k=1}^{n}n(s,T_k).
$$
In particular, if for all $s>0$, we have $n(s,T_k)=n(s,T_1)$, $k=1,2,...,n$, then
$$
D_\psi\left( \displaystyle\sum_{k=1}^{n}T_k\right)=nD_\psi\left(T_1\right)\quad\mbox{and}\quad d_\psi\left( \displaystyle\sum_{k=1}^{n}T_k\right)=nd_\psi\left(T_1\right).
$$ 
\end{thm}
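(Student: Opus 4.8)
The plan is to reduce the whole statement to two applications of Corollary \ref{cor1} by passing through a single auxiliary "column" operator. Write $S:=\sum_{k=1}^n T_k$ and set $\Phi:=\psi\circ\sqrt{\cdot}$; since $\psi\in\cC_p$ one checks that $\Phi(\alpha t)=\psi(\sqrt\alpha\,\sqrt t)\sim\alpha^{p/2}\Phi(t)$, so $\Phi\in\cC_{p/2}$ and Proposition \ref{prop ABC} and Corollary \ref{cor1} apply verbatim with $\Phi$ in place of $\psi$. Introduce $\mathbf{T}\colon H\to K^{\oplus n}$, $\mathbf{T}u=(T_1u,\dots,T_nu)$. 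The identities that drive everything are
$$
\mathbf{T}^{\ast}\mathbf{T}=\sum_{k=1}^n T_k^{\ast}T_k,\qquad
S^{\ast}S=\mathbf{T}^{\ast}\mathbf{T}+\sum_{i\neq j}T_i^{\ast}T_j,
$$
together with the observation that $\mathbf{T}\mathbf{T}^{\ast}$ is the block operator on $K^{\oplus n}$ with $(i,j)$ entry $T_iT_j^{\ast}$, whose diagonal part is $D:=\operatorname{diag}(T_1T_1^{\ast},\dots,T_nT_n^{\ast})$ and whose off-diagonal part is $R:=(T_iT_j^{\ast})_{i\neq j}$.

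First I would record the dictionary between the functionals of an operator and of its modulus squared: for any compact $A$ one has $n(s,A)=n(s^2,A^{\ast}A)$ and $\Phi(s^2)=\psi(s)$, so, substituting $\sigma=s^2$,
$$
D_\psi(A)=\limsup_{s\to0^+}\psi(s)\,n(s,A)=\limsup_{\sigma\to0^+}\Phi(\sigma)\,n(\sigma,A^{\ast}A)=D_\Phi(A^{\ast}A),
$$
and likewise $d_\psi(A)=d_\Phi(A^{\ast}A)$. Applying this to $A=S$ and using the second identity above with the hypothesis $T_i^{\ast}T_j\in\Sigma^0_{\Phi}$ for $i\neq j$ (together with stability of $\Sigma^0_\Phi$ under finite sums), Corollary \ref{cor1} gives $D_\Phi(S^{\ast}S)=D_\Phi(\mathbf{T}^{\ast}\mathbf{T})$ and $d_\Phi(S^{\ast}S)=d_\Phi(\mathbf{T}^{\ast}\mathbf{T})$, that is, $D_\psi(S)=D_\psi(\mathbf{T})$ and $d_\psi(S)=d_\psi(\mathbf{T})$.

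Next I would compute $D_\psi(\mathbf{T})$. Since $\mathbf{T}^{\ast}\mathbf{T}$ and $\mathbf{T}\mathbf{T}^{\ast}$ share the same nonzero spectrum, $D_\psi(\mathbf{T})=D_\Phi(\mathbf{T}\mathbf{T}^{\ast})$. The hypothesis $T_iT_j^{\ast}\in\Sigma^0_\Phi$ for $i\neq j$ places the off-diagonal block $R$ in $\Sigma^0_\Phi$, so Corollary \ref{cor1} yields $D_\Phi(\mathbf{T}\mathbf{T}^{\ast})=D_\Phi(D)$; and as $D$ is block diagonal, $n(\sigma,D)=\sum_k n(\sigma,T_kT_k^{\ast})$, whence
$$
D_\Phi(D)=\limsup_{\sigma\to0^+}\Phi(\sigma)\sum_{k=1}^n n(\sigma,T_kT_k^{\ast})=\limsup_{s\to0^+}\psi(s)\sum_{k=1}^n n(s,T_k).
$$
Chaining the equalities gives $D_\psi(S)=\limsup_{s\to0^+}\psi(s)\sum_k n(s,T_k)$, and the identical argument with $\liminf$ and the $d$-version of Corollary \ref{cor1} gives the claim for $d_\psi$. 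The final assertion follows at once: if $n(s,T_k)=n(s,T_1)$ for every $k$, then $\sum_k n(s,T_k)=n\,n(s,T_1)$, and factoring $n$ out of the $\limsup$ and $\liminf$ produces $nD_\psi(T_1)$ and $nd_\psi(T_1)$.

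The only genuinely delicate point, and the step I expect to need the most care, is the bookkeeping that $R$ (as an operator on $K^{\oplus n}$) lies in $\Sigma^0_\Phi$, and symmetrically that $\sum_{i\neq j}T_i^{\ast}T_j\in\Sigma^0_\Phi$. Each summand $T_iT_j^{\ast}$ (resp. $T_i^{\ast}T_j$), padded with zeros into the block structure, has exactly the same singular values as the unpadded operator and so lies in $\Sigma^0_\Phi$ by hypothesis; and $\Sigma^0_\Phi$ is closed under finite sums since $D_\Phi(A+B)^{1/(p/2+1)}\le D_\Phi(A)^{1/(p/2+1)}+D_\Phi(B)^{1/(p/2+1)}$ by Proposition \ref{prop ABC}(2), so $D_\Phi(A)=D_\Phi(B)=0$ forces $D_\Phi(A+B)=0$. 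Once this is settled, the whole proof is just two invocations of Corollary \ref{cor1}, which is precisely where Pushnitski's asymptotic orthogonality mechanism is encoded.
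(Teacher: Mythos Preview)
Your proof is correct and follows essentially the same route as the paper's. The only cosmetic difference is that you work with the column operator $\mathbf{T}u=(T_1u,\dots,T_nu)$ and start from $S^\ast S$, whereas the paper uses the row operator $JA(f_1,\dots,f_n)=\sum T_if_i$ and starts from $SS^\ast$; this simply swaps the order in which the two hypotheses $T_i^\ast T_j\in\Sigma^0_{\psi\circ\sqrt{\cdot}}$ and $T_iT_j^\ast\in\Sigma^0_{\psi\circ\sqrt{\cdot}}$ are invoked. Your explicit verification that $\Sigma^0_\Phi$ is stable under finite sums (via Proposition~\ref{prop ABC}(2)) is a point the paper leaves implicit.
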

\begin{proof}
	Let $T=\displaystyle\sum_{k=1}^{n}T_k$. Since
	$$
	TT^\ast=\displaystyle\sum_{i=1}^{n}T_iT_i^\ast+ \displaystyle\sum_{i,j=1 ,i\neq j}^{n}T_iT_j^\ast,
	$$
	and $T_i T_j^\ast\in \Sigma^0_{\psi\circ\sqrt{.}}$, for $i\neq j$, we have 
	\begin{align*}
		D_\psi(T)=D_{\psi\circ\sqrt{.}}(TT^\ast)
		&=D_{\psi\circ\sqrt{.}}\left(\displaystyle\sum_{i=1}^{n}T_iT_i^\ast \right)\\
		&=D_{\psi\circ\sqrt{.}}\left( (JA)(JA)^\ast\right)\\
		&= D_{\psi\circ\sqrt{.}}\left( (JA)^\ast(JA)\right).
	\end{align*}
	Where
	$$
	\begin{array}{ccccc}
	J & : & H^n & \longrightarrow & H \\
	& & (f_1,...,f_n) & \longrightarrow &  \displaystyle\sum_{i=1}^{n}f_i
	\end{array} \qquad\mbox{and}\qquad
	\begin{array}{ccccc}
	A & : & H^n & \longrightarrow & H^n \\
	& & (f_1,...,f_n) & \longrightarrow &  (T_1f_1,T_2f_2,...,T_nf_n)
	\end{array}
	 $$
	 The matrix of $(JA)^\ast(JA)$ is
	 \begin{align*}
	 \begin{pmatrix}
	 T_1^\ast T_1 & T_1^\ast T_2 & \dots & T_1^\ast T_n\\
	 T_2^\ast T_1 & T_2^\ast T_2 & \dots & T_2^\ast T_n\\
	 \vdots& \vdots & {\ddots}  & \vdots\\
	 T_n^\ast T_1 & T_n^\ast T_2 & \dots & T_n^\ast T_n
	 \end{pmatrix}
	 =
	 \begin{pmatrix}
	 T_1^\ast T_1 &  & &0\\
	  & T_2^\ast T_2 &  & \\
	 &  & {\ddots}  & \\
	 0 &  &  & T_n^\ast T_n
	 \end{pmatrix}
	 +
	 \begin{pmatrix}
	 O & T_1^\ast T_2 & \dots & T_1^\ast T_n\\
	 T_2^\ast T_1 & 0 & \dots & T_2^\ast T_n\\
	 \vdots& \vdots & {\ddots}  & \vdots\\
	 T_n^\ast T_1 & T_n^\ast T_2 & \dots & 0
	 \end{pmatrix}
	 =:T'+T''.
	 \end{align*}
	 Since $T_i^\ast T_j\in \Sigma^0_{\psi\circ\sqrt{.}}$, for $i\neq j$, we deduce that 
	 \begin{align*}
	 	D_\psi(T)=D_{\psi\circ\sqrt{.}}(T'+T'') 
	 	&=D_{\psi\circ\sqrt{.}}(T')\\
	 	&=D_{\psi\circ\sqrt{.}}(T_1^\ast T_1,T_2^\ast T_2,...,T_n^\ast T_n).
	 \end{align*}
	 Let now $T_0$ the operator
	 $$
	 T_0:=\begin{pmatrix}
	 T_1 &  & &0\\
	 & T_2 &  & \\
	 &  & {\ddots}  & \\
	 0 &  &  & T_n
	 \end{pmatrix}
	 $$
	 We have
	 \begin{align*}
	 D_\psi(T)=D_{\psi\circ\sqrt{.}}(T_1^\ast T_1,T_2^\ast T_2,...,T_n^\ast T_n)
	 &=D_{\psi\circ\sqrt{.}}(T_0^\ast T_0)\\
	 &=D_{\psi}(T_0)\\
	 &=\displaystyle\limsup_{s\to 0^+}\psi(s)\displaystyle\sum_{k=1}^{n}n(s,T_k).
	 \end{align*}
	 In the same way we prove that
	 $$
	 d_\psi\left( \displaystyle\sum_{k=1}^{n}T_k\right) =\displaystyle\liminf_{s\to 0^+}\psi(s)\displaystyle\sum_{k=1}^{n}n(s,T_k).
	 $$
\end{proof}


\bibliographystyle{amsplain}	
\bibliography{biblHankel}

	\end{document}